\begin{document}
 \bibliographystyle{plain}

 \newtheorem{theorem}{Theorem}
 \newtheorem{lemma}[theorem]{Lemma}
 \newtheorem{proposition}[theorem]{Proposition}
 \newtheorem{corollary}[theorem]{Corollary}
 \theoremstyle{definition}
 \newtheorem{definition}[theorem]{Definition}
 \newtheorem{example}[theorem]{Example}
 \theoremstyle{remark}
 \newtheorem{remark}[theorem]{Remark}
 \newcommand{\mc}{\mathcal}
 \newcommand{\A}{\mc{A}}
 \newcommand{\B}{\mc{B}}
 \newcommand{\cc}{\mc{C}}
 \newcommand{\D}{\mc{D}}
 \newcommand{\E}{\mathbb{E}}
 \newcommand{\F}{\mc{F}}
 \newcommand{\G}{\mc{G}}
 \newcommand{\sH}{\mc{H}}
 \newcommand{\I}{\mc{I}}
 \newcommand{\J}{\mc{J}}
 \newcommand{\nn}{\mc{N}}
 \newcommand{\rr}{\mc{R}}
 \newcommand{\sS}{\mc{S}}
 \newcommand{\U}{\mc{U}}
 \newcommand{\X}{\mc{X}}
 \newcommand{\Y}{\mc{Y}}
 \newcommand{\C}{\mathbb{C}}
 \newcommand{\R}{\mathbb{R}}
 \newcommand{\N}{\mathbb{N}}
 \newcommand{\Q}{\mathbb{Q}}
 \newcommand{\Z}{\mathbb{Z}}
 \newcommand{\csch}{\mathrm{csch}}
 \newcommand{\tF}{\widehat{F}}
 \newcommand{\tG}{\widehat{G}}
 \newcommand{\tH}{\widehat{H}}
 \newcommand{\tf}{\widehat{f}}
 \newcommand{\ug}{\widehat{g}}
 \newcommand{\wg}{\widetilde{g}}
 \newcommand{\uh}{\widehat{h}}
 \newcommand{\wh}{\widetilde{h}}
 \newcommand{\wl}{\widetilde{l}}
 \newcommand{\tk}{\widehat{k}}
 \newcommand{\tK}{\widehat{K}}
 \newcommand{\tl}{\widehat{l}}
 \newcommand{\tL}{\widehat{L}}
 \newcommand{\tm}{\widehat{m}}
 \newcommand{\tM}{\widehat{M}}
 \newcommand{\tp}{\widehat{\varphi}}
 \newcommand{\tq}{\widehat{q}}
 \newcommand{\tT}{\widehat{T}}
 \newcommand{\tU}{\widehat{U}}
 \newcommand{\tu}{\widehat{u}}
 \newcommand{\tV}{\widehat{V}}
 \newcommand{\tv}{\widehat{v}}
 \newcommand{\tW}{\widehat{W}}
 \newcommand{\ba}{\boldsymbol{a}}
 \newcommand{\bal}{\boldsymbol{\alpha}}
 \newcommand{\bx}{\boldsymbol{x}}
 \newcommand{\p}{\varphi}
 \newcommand{\f}{\frac52}
 \newcommand{\g}{\frac32}
 \newcommand{\h}{\frac12}
 \newcommand{\hh}{\tfrac12}
 \newcommand{\ds}{\text{\rm d}s}
 \newcommand{\dt}{\text{\rm d}t}
  \newcommand{\dr}{\text{\rm d}r}
 \newcommand{\du}{\text{\rm d}u}
 \newcommand{\dv}{\text{\rm d}v}
 \newcommand{\dw}{\text{\rm d}w}
  \newcommand{\dz}{\text{\rm d}z}
 \newcommand{\dx}{\text{\rm d}x}
   \newcommand{\dxx}{\text{\rm d}{\bf x}}
 \newcommand{\dy}{\text{\rm d}y}
 \newcommand{\dl}{\text{\rm d}\lambda}
 \newcommand{\dmu}{\text{\rm d}\mu(\lambda)}
 \newcommand{\dnu}{\text{\rm d}\nu(\lambda)}
  \newcommand{\dnuN}{\text{\rm d}\nu_N(\lambda)}
\newcommand{\dnus}{\text{\rm d}\nu_{\sigma}(\lambda)}
 \newcommand{\dlnu}{\text{\rm d}\nu_l(\lambda)}
 \newcommand{\dnnu}{\text{\rm d}\nu_n(\lambda)}
\newcommand{\sech}{\text{\rm sech}}
\newcommand{\CC}{\mathbb{C}}
\newcommand{\NN}{\mathbb{N}}
\newcommand{\RR}{\mathbb{R}}
\newcommand{\ZZ}{\mathbb{Z}}
\newcommand{\thp}{\theta^+}
\newcommand{\thpn}{\theta^+_N}
\newcommand{\vthp}{\vartheta^+}
\newcommand{\vthpn}{\vartheta^+_N}
\newcommand{\ft}[1]{\widehat{#1}}
\newcommand{\support}[1]{\mathrm{supp}(#1)}
\newcommand{\gplus}{G^+_\lambda}
\newcommand{\ph}{\mathcal{H}}
\newcommand{\godd}{G^o_\lambda}
\newcommand{\mplus}{M_\lambda^+}
\newcommand{\lplus}{L_\lambda^+}
\newcommand{\modd}{M_\lambda^o}
\newcommand{\lodd}{L_\lambda^o}
\newcommand{\sgp}{x_+^0}
\newcommand{\Tl}{T_\lambda}
\newcommand{\Tlc}{T_{\lambda,c}}
\newcommand{\El}{{E_\lambda}}

 \newcommand{\im}{{\rm Im}\,}
  \newcommand{\re}{{\rm Re}\,}

 \newcommand{\T}{\mc{T}}
 \newcommand{\M}{\mc{M}}
 \renewcommand{\L}{\mc{L}}
 \newcommand{\K}{\mc{K}}
 \renewcommand{\H}{\mc{H}}
 \newcommand{\Bc}{\mathcal{B}}

\newcommand{\TT}{\mathfrak{T}}
\newcommand{\MM}{\mathfrak{M}}
\newcommand{\LL}{\mathfrak{L}}
\newcommand{\KK}{\mathfrak{K}}
\newcommand{\GG}{\mathfrak{G}}
\newcommand{\HH}{\mathfrak{H}}

 \renewcommand{\d}{\text{\rm d}}
 
  \renewcommand{\SS}{\mathbb{S}}
 
  \newcommand{\z}{{\bf z}}
    \newcommand{\x}{{\bf x}}
      \newcommand{\y}{{\bf y}}
        \renewcommand{\t}{{\bf t}}
         \renewcommand{\v}{{\bf v}}
    \newcommand{\xxi}{{\bf \xi}}      

 \newcommand{\ov}{\overline}   
  \newcommand{\wt}{\widetilde}

 \newcommand{\uc}{\partial \mathbb{D}}
\newcommand{\ud}{\mathbb{D}}
\newcommand{\dbpn}{\mathcal{H}_n(P)}

\newcommand{\newr}[1]{\textcolor{red}{#1}}
\newcommand{\new}[1]{\textcolor{black}{#1}}

 \def\today{\ifcase\month\or
  January\or February\or March\or April\or May\or June\or
  July\or August\or September\or October\or November\or December\fi
  \space\number\day, \number\year}

\title[Extremal functions]{Extremal functions in de Branges \\
and Euclidean spaces II}
\author[Carneiro and Littmann]{Emanuel Carneiro and  Friedrich Littmann}

\date{\today}
\subjclass[2010]{41A30, 46E22, 41A05, 41A63}
\keywords{Extremal functions, de Branges spaces, Gaussian, exponential type, Laplace transform, radial functions, homogeneous spaces, majorants, minorants.}

\address{IMPA - Instituto Nacional de Matem\'{a}tica Pura e Aplicada, Estrada Dona Castorina, 110, Rio de Janeiro, Brazil 22460-320.}
\email{carneiro@impa.br}
\address{Department of mathematics, North Dakota State University, Fargo, ND 58105-5075.}
\email{friedrich.littmann@ndsu.edu}

\maketitle

\begin{abstract}
This paper presents the Gaussian subordination framework to generate optimal one-sided approximations to multidimensional real-valued functions by functions of prescribed exponential type. Such extremal problems date back to the works of Beurling and Selberg and provide a variety of applications in analysis and analytic number theory. Here we majorize and minorize (on $\R^N$) the Gaussian $\x \mapsto e^{-\pi \lambda |\x|^2}$, where $\lambda >0$ is a free parameter, by functions with distributional Fourier transforms supported on Euclidean balls, optimizing weighted $L^1$-errors. By integrating the parameter $\lambda$ against suitable measures, we solve the analogous problem for a wide class of radial functions. Applications to inequalities and periodic analogues are discussed. The constructions presented here rely on the theory of de Branges spaces of entire functions and on new interpolations tools derived from the theory of Laplace transforms of Laguerre-P\'{o}lya functions.

\end{abstract}

\numberwithin{equation}{section}

\section{Introduction}

In this paper we continue our study of the Beurling-Selberg extremal problem and its applications, in an unprecedented general setting. We develop here the {\it Gaussian subordination} method to produce optimal (in weighted $L^1$-metrics) one-sided approximations of multidimensional real-valued functions by functions of exponential type. In particular, this plainly becomes the most powerful framework to deal with this sort of extremal problems, extending a number of previous works in the literature (e.g. \cite{CLV, CV2, GV} in the one-dimensional case, and \cite{CL3} in the multidimensional case). 

\smallskip

We follow closely the notation of our precursor \cite{CL3} to facilitate some of the references.

\subsection{The extremal problem} Throughout the text we use the standard Fourier analysis notation as in \cite{SW}. We denote vectors in $\R^N$ or $\C^N$ with bold font (e.g. $\x$, $\y$, $\z$) and numbers in $\R$ or $\C$ with regular font (e.g. $x,y,z$). For $\z = (z_1, z_2, \ldots, z_N) \in \C^N$ we let $|\cdot|$ denote the usual norm $|\z| = (|z_1|^2 + \ldots + |z_N|^2)^{1/2}$, and define a second norm $\|\cdot\|$ by 
$$\|\z\| = \sup \left\{ \left|\sum_{n=1}^N z_n\,t_n\right|; \ \t \in \R^N \ {\rm and} \ |\t|\leq 1\right\}.$$
If $F: \C^N \to \C$ is an entire function of $N$ complex variables, which is not identically zero, we say that $F$ has {\it exponential type} if
$$\tau(F):= \limsup_{\|\z\|\to \infty} \|\z\|^{-1}\,\log|F(\z)| < \infty.$$
In this case, the number $\tau(F)$ is called the exponential type of $F$. Naturally, when $N=1$, this is the classical definition of exponential type. When $N \geq 2$, our definition is a particular case of a more general definition of exponential type with respect to a compact, convex and symmetric set $K \subset \R^N$ (cf. \cite[pp. 111-112]{SW}). In our case, this convex set $K$ is {\it the unit Euclidean ball}. 

\smallskip

We address here the following extremal problem: given a function $\mc{F}: \R^N \to \R$ and real parameters $\delta >0$ and $\nu > -1$, we seek entire functions $\mc{L}: \C^N \to \C$ and $\mc{M}: \C^N \to \C$ such that 
\smallskip
\begin{enumerate}
\item[(i)] $\mc{L}$ and $\mc{M}$ have exponential type at most $\delta$.
\smallskip
\item[(ii)] $\mc{L}$ and $\mc{M}$ are real-valued on $\R^N$ and
\begin{equation}\label{Intro_EP1}
\mc{L}(\x) \leq \mc{F}(\x) \leq \mc{M}(\x)
\end{equation}
for all $\x \in \R^N$.
\smallskip
\item[(iii)] Subject to (i) and (ii), the values of the integrals
\begin{equation}\label{Intro_EP2}
\int_{\R^N} \big\{{\mc M}(\x) - \mc{F}(\x)\big\}\, |\x|^{2\nu +2 - N}\,\d\x \ \ \ \ {\rm and} \ \ \ \  \int_{\R^N} \big\{{\mc F}(\x) - \mc{L}(\x)\big\}\, |\x|^{2\nu +2 - N}\,\d\x 
\end{equation}
are minimized.  
\end{enumerate}

In the one-dimensional case with respect to Lebesgue measure (i.e. $\nu = -1/2$), this problem and its applications have a long and rich history, dating back to the works of Beurling and Selberg for the signum function and characteristic functions of intervals, respectively (see \cite{S2,V}). The one-dimensional theory was substantially developed over the past years as we have reached the solution of this extremal problem for a wide class of even, odd and truncated functions \cite{CL, CL2, CLV, CV2, GV, L1, L3, L4, V}. The fact that these extremal majorants and minorants have compactly supported Fourier transforms (Paley-Wiener theorem) allows for a variety of interesting applications in number theory and analysis, for instance in connection to: large sieve inequalities \cite{HV, M, V}, Erd\"{o}s-Tur\'{a}n inequalities \cite{CV2, HKW, LV,V}, Hilbert-type inequalities \cite{CL3, CLV, CV2, GV, L3, V}, Tauberian theorems \cite{GV}, inequalities in signal processing \cite{DL}, and bounds in the theory of the Riemann zeta-function \cite{CC, CCM, CCM2, CS, Ga, GG}. Additional interesting works related to this theory include \cite{BMV, CV3, Gan, GL, GKM, Ke2, K2, LS, Na}.

\smallskip

The weighted one-dimensional extremal problem (including the possibility of other weights rather than the power weights) has an intimate connection to the theory of de Branges spaces of entire functions, as observed in the remarkable work of Holt and Vaaler \cite{HV} for the signum function. This was later extended in \cite{CG} for a class of odd and truncated functions, and in \cite{CL3} for a class of even functions. This extremal problem was considered in \cite{CCLM} for characteristic functions of intervals, with applications to the pair correlation of zeros of the Riemann zeta-functon (where the relevant weight is $\left\{1 - (\tfrac{\sin \pi x}{\pi x})^2\right\}\dx$). De Branges' theory has also played an important role in the recent works \cite{Ke, L4, LS}. 

\smallskip

In the multidimensional case, this problem presents additional difficulties, and has only been addressed in two occasions: in the work of Holt and Vaaler \cite{HV}, for characteristic functions of Euclidean balls, and in our precursor \cite{CL3} for the exponential function $\x \mapsto e^{-\lambda |\x|}$, where $\lambda >0$, and for a family of radial functions with exponential subordination given by
\begin{equation}\label{Exp_Sub}
\x \mapsto \int_0^{\infty}\big\{ e^{-\lambda |\x|} - e^{-\lambda} \big\}\,\d\sigma(\lambda),
\end{equation}
where $\sigma$ is a suitable nonnegative Borel measure on $(0,\infty)$. The class \eqref{Exp_Sub} includes examples such as $\x \mapsto -\log |\x|$ and $\x \mapsto \Gamma(\alpha +1) \big\{|\x|^{-\alpha-1}-1\big\}$ for $-2 < \alpha < 2\nu +1$. The primary objective of this paper is to extend the framework of \cite{CL3}, providing a complete solution of our extremal problem \eqref{Intro_EP1} - \eqref{Intro_EP2} for the Gaussian $\x \mapsto e^{- \pi \lambda |\x|^2}$, where $\lambda >0$ (that is not included in the class \eqref{Exp_Sub}), and generalizing the construction to a wide class of radial functions that {\it strictly contains} \eqref{Exp_Sub}. Previously, the special case $\nu = -1/2$ and $N=1$  of the Gaussian subordination framework was obtained in \cite{CLV} with an interpolation method that does not extend to arbitrary $\nu$ and $N$. The results here are based on a different approach described in Section \ref{Int_type_zero} which is substantially simpler even for $\nu = -1/2$ and $N=1$.

The radial structure is a crucial element in this work, in the sense that it allows us to connect the multidimensional problem to a weighted one-dimensional problem, which we solve using the rich theory of de Branges spaces of entire functions \cite{B}.

\subsection{De Branges spaces} Before stating our main results, we must recall the basic features of the theory of de Branges spaces \cite{B}. Throughout the text we write
$$\U = \{z \in \C;\ \im(z) >0\}$$ 
for the open upper half-plane. An analytic function $F: \U \to \C$ is said to have {\it bounded type} if it can be written as a quotient of two bounded and analytic functions in $\U$. Functions of bounded type in $\U$ admit a Nevanlinna factorization \cite[Theorems 9 and 10]{B}, from which we draw the number
\begin{equation*}
v(F) := \limsup_{y \to \infty} \, y^{-1}\log|F(iy)| <\infty,
\end{equation*}
called the {\it mean type} of $F$. It also follows from \cite[Theorems 9 and 10]{B} that the mean type verifies 
$$v(FG) = v(F) + v(G)$$ 
whenever $F$ and $G$ have bounded type in $\U$. 

\smallskip

If $E: \C \to \C$ is an entire function, we let $E^*:\C \to \C$ be the entire function given by $E^*(z) = \ov{E(\ov{z})}$. We say that $E$ is a {\it Hermite-Biehler} function if 
\begin{equation}\label{Intro_HB_cond}
|E^*(z)| < |E(z)|
\end{equation}
for all $z \in \U$, and we say that $E$ is {\it real entire} if its restriction to $\R$ is real-valued. If $E: \C \to \C$ is a Hermite-Biehler function, we consider the vector space $\H(E)$ of entire functions $F: \C \to \C$ such that 
\begin{equation*}
\|F\|_E^2 := \int_{-\infty}^\infty |F(x)|^{2} \, |E(x)|^{-2} \, \dx <\infty\,,
\end{equation*}
and such that $F/E$ and $F^*/E$ have both bounded type in $\U$ and nonpositive mean type. The space $\H(E)$ is in fact a {\it reproducing kernel Hilbert space}, with inner product given by
\begin{equation*}
\langle F, G \rangle_E := \int_{-\infty}^\infty F(x) \,\overline{G(x)} \, |E(x)|^{-2} \, \dx.
\end{equation*}
This means that for each $w \in \C$ the evaluation functional $F \mapsto F(w)$ is continuous, and thus there exists a function $z \mapsto K(w,z) \in \H(E)$ such that 
\begin{equation}\label{Intro_rep_pro}
F(w) = \langle F, K(w,\cdot) \rangle_E
\end{equation}
for all $F \in \H(E)$. The function $K(w,z)$ is called the {\it reproducing kernel} of $\H(E)$. 

\smallskip

We denote by $A$ and $B$ the following companion functions
\begin{equation}\label{Intro_def_A_B}
A(z) := \frac12 \big\{E(z) + E^*(z)\big\} \ \ \ {\rm and}  \ \ \ B(z) := \frac{i}{2}\big\{E(z) - E^*(z)\big\}.
\end{equation}
Note that $A$ and $B$ are real entire functions such that $E(z) = A(z) -iB(z)$. The reproducing kernel is then given by \cite[Theorem 19]{B}
\begin{align}\label{Intro_Def_K_w_z}
K(w,z) &= \frac{B(z)A(\ov{w}) - A(z)B(\ov{w})}{\pi (z - \ov{w})} = \frac{E(z)E^*(\ov{w}) - E^*(z)E(\ov{w})}{2\pi i (\ov{w}-z)},
\end{align}
and when $z = \ov{w}$ we have
\begin{equation}\label{Intro_Def_K}
K(\ov{z}, z) = \frac{B'(z)A(z) - A'(z)B(z)}{\pi}.
\end{equation}
From \eqref{Intro_rep_pro} we obtain
\begin{align*}
0 \leq \|K(w, \cdot)\|_E^2 = \langle K(w, \cdot), K(w, \cdot) \rangle_E = K(w,w),
\end{align*}
and we notice that $K(w,w)=0$ if and only if $w \in\R$ and $E(w) = 0$ (see for instance \cite[Lemma 11]{HV}).

\subsection{Approximating the Gaussian in de Branges spaces}

Throughout the paper we let $E$ be a Hermite-Biehler function satisfying the following properties:
\begin{enumerate}
\item[(P1)] $E$ has bounded type in $\U$;
\smallskip
\item[(P2)] $E$ has no real zeros; 
\smallskip
\item[(P3)] $z \mapsto E(iz)$ is a real entire function (or, equivalently, $E^*(z) = E(-z)$ for all $z \in \C$).
\smallskip
\item[(P4)] $A, B \notin \H(E)$. 
\end{enumerate} 
\smallskip
The fact that $E$ is a Hermite-Biehler function that satisfies (P1) implies that $E$ has exponential type and $\tau(E) = v(E)$. This is a consequence of Krein's theorem (see \cite{K} or \cite[Lemma 9]{HV}). Also, from the Hermite-Biehler condition \eqref{Intro_HB_cond}, we note that the companion functions $A$ and $B$ only have real zeros, and moreover these zeros are all simple due to \eqref{Intro_Def_K} and property (P2). We are now able to state our first result, the solution to a weighted one-dimensional extremal problem for the Gaussian, with respect to a general de Branges metric.

\begin{theorem}\label{Thm1}
Let $\lambda>0$. Let $E$ be a Hermite-Biehler function satisfying properties {\rm (P1) \!-\! (P4)} above. Assume also that
\begin{equation*}
\int_{-\infty}^{\infty} e^{- \pi \lambda |x|^2}\, |E(x)|^{-2}\, \dx < \infty.
\end{equation*}
The following properties hold:
\smallskip
\begin{enumerate}
\item[(i)] If $L:\C \to \C$ is an entire function of exponential type at most $2 \tau(E)$ such that $L(x) \leq e^{- \pi \lambda |x|^2}$ for all $x \in \R$ then 
\begin{equation}\label{Intro-L-integral}
\int_{-\infty}^{\infty} L(x) \,|E(x)|^{-2}\,\dx \leq \sum_{A(\xi)=0} \frac{e^{- \pi \lambda |\xi|^2}}{K(\xi,\xi)},
\end{equation}
where the sum on the right-hand side of \eqref{Intro-L-integral} is finite. Moreover, there exists an entire function $z\mapsto L(A^2, \lambda, z)$ of exponential type at most $2 \tau(E)$ such that $L(A^2, \lambda, x) \leq e^{-\pi \lambda  |x|^2}$ for all $x \in \R$ and equality in \eqref{Intro-L-integral} holds.
\medskip
\item[(ii)] If $M:\C \to \C$ is an entire function of exponential type at most $2 \tau(E)$ such that $M(x) \geq e^{- \pi \lambda |x|^2}$ for all $x \in \R$ then 
\begin{equation}\label{Intro-M-integral}
\int_{-\infty}^{\infty} M(x) \,|E(x)|^{-2}\,\dx \geq \sum_{B(\xi)=0} \frac{e^{-\pi \lambda |\xi|^2}}{K(\xi,\xi)},
\end{equation}
where the sum on the right-hand side of \eqref{Intro-M-integral} is finite. Moreover, there exists an entire function $z\mapsto M(B^2, \lambda, z)$ of exponential type at most $2 \tau(E)$ such that $M(B^2, \lambda, x) \geq e^{-\pi \lambda |x|^2}$ for all $x \in \R$ and equality in \eqref{Intro-M-integral} holds.
\end{enumerate}
\end{theorem}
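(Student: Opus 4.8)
The plan is to reduce the Gaussian problem to the one-variable interpolation theory at nodes given by the zeros of the companion functions $A$ and $B$, in the spirit of the Holt--Vaaler construction for the signum function, but using the Gaussian's positivity structure to control the sign of the error. I focus on part (i), the minorant; part (ii) is entirely analogous with $A$ replaced by $B$.

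\textbf{Step 1: the quadrature/upper bound.} Let $L$ be any entire function of exponential type at most $2\tau(E)$ with $L(x)\le e^{-\pi\lambda|x|^2}$ on $\R$. The hypothesis $\int e^{-\pi\lambda|x|^2}|E(x)|^{-2}\,\dx<\infty$ together with $L\le e^{-\pi\lambda|x|^2}$ gives that $\int L(x)^-|E(x)|^{-2}\,\dx<\infty$, so the left-hand integral of \eqref{Intro-L-integral} is well defined in $[-\infty,\infty)$. The key tool is a Gaussian quadrature formula at the zeros of $A$: since $L$ has exponential type at most $2\tau(E)=2v(E)$ and $L/|E|^2$ is (essentially) integrable, $L/E^2$ and $L/(E^*)^2$ have bounded type and nonpositive mean type in $\U$, and one shows
\[
\int_{-\infty}^{\infty} L(x)\,|E(x)|^{-2}\,\dx \;=\; \sum_{A(\xi)=0}\frac{L(\xi)}{K(\xi,\xi)}.
\]
This is the standard de Branges quadrature identity; I would cite the version in \cite{HV} (or re-derive it from $\eqref{Intro_Def_K}$ and a partial-fractions/residue argument, using that $A\notin\H(E)$ by (P4) so that the nodes are exactly $\{A(\xi)=0\}$ and the sum is genuinely a quadrature with positive weights $K(\xi,\xi)^{-1}$). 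Because $L(\xi)\le e^{-\pi\lambda|\xi|^2}$ at each node and the weights are positive, the bound \eqref{Intro-L-integral} follows at once; the finiteness of the right-hand sum is exactly the integrability hypothesis applied to the quadrature of the majorant-free Gaussian (or, more simply, it is dominated by $\int e^{-\pi\lambda|x|^2}|E(x)|^{-2}\,\dx$ via the same identity applied to a suitable entire minorant of the Gaussian of the correct type).

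\textbf{Step 2: the extremal minorant.} To achieve equality I must build $z\mapsto L(A^2,\lambda,z)$ of exponential type at most $2\tau(E)$ that interpolates $e^{-\pi\lambda x^2}$ at every zero $\xi$ of $A$, agrees in derivative there (double interpolation, forced by the one-sided condition at a smooth function), and satisfies $L(A^2,\lambda,x)\le e^{-\pi\lambda x^2}$ for all real $x$. The natural ansatz is
\[
L(A^2,\lambda,z) \;=\; A(z)^2\left(\sum_{A(\xi)=0}\left[\frac{c_\xi}{(z-\xi)^2}+\frac{d_\xi}{z-\xi}\right] + R(z)\right),
\]
with $c_\xi,d_\xi$ chosen so that the bracket matches the Hermite data of $e^{-\pi\lambda z^2}$ at each $\xi$, i.e. $c_\xi = e^{-\pi\lambda\xi^2}/A'(\xi)^2$ and $d_\xi$ the corresponding first-order coefficient; here $R$ is an entire correction term of the correct type used to absorb the behaviour at infinity and to enforce exponential type $\le 2\tau(E)$. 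The cleanest route — and the one I expect the paper takes in the section "interpolation at exponential type zero" referenced as Section \ref{Int_type_zero} — is to first treat the type-zero case (a single Gaussian, $E$ replaced effectively by a constant, interpolation at the zeros of a fixed Laguerre--P\'olya function) and then transplant it: writing $e^{-\pi\lambda x^2}$ via the subordination $\int_0^\infty (\dots)$ or directly using that $x\mapsto e^{-\pi\lambda x^2}$ is the restriction of an entire function whose only relevant feature is that $e^{-\pi\lambda z^2}-L(A^2,\lambda,z)$ must have a definite sign. Concretely, one verifies that
\[
e^{-\pi\lambda x^2}-L(A^2,\lambda,x)\;=\;A(x)^2\cdot G_\lambda(x)
\]
for a function $G_\lambda\ge 0$ on $\R$, where $G_\lambda$ is itself a Laplace-transform-type object built from the Laguerre--P\'olya structure; positivity of $G_\lambda$ is what delivers the minorant property, and the interpolation at the zeros of $A$ (with matching derivatives) is what delivers equality in the quadrature, hence equality in \eqref{Intro-L-integral}.

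\textbf{Step 3: the type estimate.} Finally one checks $\tau(L(A^2,\lambda,\cdot))\le 2\tau(E)$. Since $A$ has exponential type $\tau(E)$ (it is $\tfrac12(E+E^*)$ and by (P3) $E^*(z)=E(-z)$, so $\tau(A)\le\tau(E)$, with equality generically), $A^2$ contributes $2\tau(E)$; the series-plus-correction factor must therefore be shown to be of exponential type zero (bounded type on every ray, subpolynomial growth off the real axis), which follows from the Laguerre--P\'olya interpolation estimates developed in the cited section together with the decay of the Gaussian data $e^{-\pi\lambda\xi^2}$ along the node sequence.

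\textbf{Main obstacle.} The genuinely hard part is Step 2: producing the interpolation $L(A^2,\lambda,\cdot)$ with \emph{simultaneously} the right exponential type, the exact Hermite data at all zeros of $A$, and the global one-sided inequality on $\R$ — i.e. proving positivity of the remainder $G_\lambda$. This is precisely where the new interpolation tools from Laplace transforms of Laguerre--P\'olya functions (advertised in the abstract and in Section \ref{Int_type_zero}) are needed; the quadrature half of the argument (Step 1) and the type bookkeeping (Step 3) are comparatively routine once that construction is in hand.
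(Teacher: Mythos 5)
Your outline captures the right overall shape --- a de Branges quadrature at the zeros of $A$ gives the upper bound, and the work lies in building an interpolating minorant with the correct sign and type --- but there are two genuine gaps. First, in Step 1 you invoke the quadrature
\[
\int_{-\infty}^\infty L(x)\,|E(x)|^{-2}\,\dx \;=\; \sum_{A(\xi)=0}\frac{L(\xi)}{K(\xi,\xi)}
\]
for an arbitrary signed minorant $L$ as if it were routine. What Plancherel plus de Branges' Theorem 22 gives directly is the quadrature for \emph{nonnegative} $L^1$ functions of exponential type $\le 2\tau(E)$, via the factorization $L=WW^*$ with $W\in\H(E)$. For a signed $L$ one needs a decomposition $L = UU^* - TT^*$, and in the generality of Theorem \ref{Thm1} (arbitrary $E$ satisfying (P1)--(P4)) this is not an off-the-shelf fact. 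The paper obtains it by proving part (ii) \emph{first}: the extremal majorant $z\mapsto M(B^2,\lambda,z)$ is constructed, shown to lie in $L^1(\R,|E(x)|^{-2}\dx)$, and factored as $UU^*$; then $M(B^2,\lambda,\cdot)-L$ is a nonnegative $L^1$ function of the right type, hence of the form $TT^*$, which yields the required decomposition of $L$. Your presentation makes Step 1 look independent of the construction in Step 2, which inverts the logical dependence the paper actually needs (the direct residue/partial-fraction route you mention would have to be carried out in detail, which you do not do).

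Second, Step 2 is an outline rather than a proof, and the Hermite/partial-fraction ansatz you write down is not what the paper does, nor would it easily deliver the sign of the remainder. The paper sets $L(A^2,\lambda,z)=\mathcal{A}(F_{A^2},\pi\lambda,z^2)$, built from the inverse Laplace transform $g_{0-}$ of $1/F_{A^2}$ as in Section \ref{Int_type_zero}: the one-sided inequality comes from the sign-definiteness of $g_{0-}$ (Lemma \ref{g-sign-changes}), the interpolation at the zeros of $A$ follows from the definition of $\mathcal{A}$, and the exponential type bound comes from the boundedness estimates in Lemmas \ref{bdd-lemma} and \ref{L-thm}. You correctly flag this as the heart of the matter, but you neither execute the Laplace-transform construction nor supply the positivity and type arguments that your alternative ansatz would require.
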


There are several important ingredients that come together in the proof of Theorem \ref{Thm1}. Among these we highlight the following: (a) the construction of suitable entire functions of exponential type that interpolate the Gaussian at the zeros of a given Laguerre-P\'{o}lya function; (b) the decomposition of an $L^1(\R, |E(x)|^{-2}\,\dx)$-function of exponential type $2\tau(E)$ as a difference of squares of functions in $\H(E)$; (c) the use of Plancherel's identity on the Hilbert space $\H(E)$. Since $A$ and $B$ are defined in terms of $E$, the extremal functions of Theorem \ref{Thm1} depend implicitly on $E$, and the particular choice of notation  
$z\mapsto L(A^2, \lambda, z)$ and $z\mapsto M(B^2, \lambda, z)$ will become clear in Section \ref{Int_type_zero}.

\subsection{Approximating the Gaussian in Euclidean spaces} We shall use our Theorem \ref{Thm1} to obtain the solution of the multidimensional Beurling-Selberg extremal problem \eqref{Intro_EP1} - \eqref{Intro_EP2} for the Gaussian. As in \cite{CL3,HV}, the main insight here is the specialization of Theorem \ref{Thm1} to a particular family of homogeneous de Branges spaces, suitable to treat the power weights $|\x|^{2\nu + 2 - N}\,\d\x$. We start by briefly reviewing the basic properties of these spaces. 

\smallskip

For $\nu > -1$ let $A_{\nu}:\C \to \C$ and $B_{\nu}:\C \to \C$ be real entire functions defined by 
\begin{equation}\label{Intro_A_nu}
A_{\nu}(z) = \sum_{n=0}^{\infty} \frac{(-1)^n \big(\tfrac12 z\big)^{2n}}{n!(\nu +1)(\nu +2)\ldots(\nu+n)}
\end{equation}
and
\begin{equation}\label{Intro_B_nu}
B_{\nu}(z) = \sum_{n=0}^{\infty} \frac{(-1)^n \big(\tfrac12 z\big)^{2n+1}}{n!(\nu +1)(\nu +2)\ldots(\nu+n+1)}.
\end{equation}
These functions are related to the classical Bessel functions by the identities
\begin{align}
A_{\nu}(z) &= \Gamma(\nu +1) \left(\tfrac12 z\right)^{-\nu} J_{\nu}(z),\label{Intro_Bessel1}\\
B_{\nu}(z) & = \Gamma(\nu +1) \left(\tfrac12 z\right)^{-\nu} J_{\nu+1}(z).\label{Intro_Bessel2}
\end{align}
It follows that both $A_{\nu}$ and $B_{\nu}$ have only real, simple zeros and have no common zeros (note that in the simplest case $\nu = -1/2$ we have $A_{-1/2}(z) = \cos z$ and $B_{-1/2}(z) = \sin z$). Moreover, they satisfy the system of differential equations 
\begin{equation}\label{Intro_Diff_Eqs}
A_{\nu}'(z) = - B_{\nu}(z) \ \ \ {\rm and} \ \ \ B_{\nu}'(z) = A_{\nu}(z) - (2\nu +1)z^{-1}B_{\nu}(z).
\end{equation}
The entire function 
$$E_{\nu}(z) := A_{\nu}(z) - iB_{\nu}(z)$$
is a Hermite-Biehler function that satisfies properties (P1) \!-\! (P4) listed above, with $v(E_{\nu}) = \tau(E_{\nu}) = 1$ (we shall verify these assertions later), and we denote by $K_{\nu}$ the reproducing kernel of the de Branges space $\mc{H}(E_{\nu})$.

\smallskip

For $\delta >0$ and $N \in \N$ we define $\E_{\delta}^{N}$ to be the set of all entire functions $F:\C^N \to \C$ of exponential type at most $\delta$. Given a real-valued function $\mc{F}:\R^N \to \R$ we write
\begin{align*}
\E_{\delta}^{N-} (\mc{F}) &:= \big\{\mc{L}\in \E_{\delta}^N;\ \mc{L}(\x) \leq \mc{F}(\x), \forall \x \in \R^N \big\},\\[0.5em]
\E_{\delta}^{N+}(\mc{F}) &:= \big\{\mc{M} \in \E_{\delta}^N;\ \mc{M}(\x) \geq \mc{F}(\x), \forall \x \in \R^N \big\}.\\[-1em]
\end{align*}

\begin{theorem}\label{Thm2}
Let $\nu>-1$, $\lambda >0$ and $\mc{F}_{\lambda}(\x) = e^{- \pi \lambda |\x|^2}$. Let
\begin{align*}
U_{\nu}^{N-}(\delta, \lambda)  & = \inf\left\{ \int_{\R^N} \left\{e^{- \pi \lambda |\x|^2} - \mc{L}(\x)\right\}\,|\x|^{2\nu+2 - N}\,\dxx; \  \mc{L} \in \E_{\delta}^{N-} (\mc{F}_{\lambda}) \right\},\\
U_{\nu}^{N+}(\delta, \lambda)  & = \inf\left\{ \int_{\R^N} \left\{\mc{M}(\x) - e^{-  \pi \lambda |\x|^2}\right\}\,|\x|^{2\nu+2 - N}\,\dxx; \  \mc{M} \in \E_{\delta}^{N+} (\mc{F}_{\lambda}) \right\}.
\end{align*}
The following properties hold:
\begin{enumerate}
\item[(i)] If $\kappa > 0$ then $U_{\nu}^{N\pm}(\delta, \lambda) = \kappa^{2\nu +2} \,U_{\nu}^{N\pm}(\kappa\delta, \kappa^2\lambda)$.
\smallskip
\item[(ii)] For any $\delta >0$ we have
\begin{equation*}\label{Intro_rel_N_1}
U_{\nu}^{N\pm}(\delta, \lambda) = \tfrac12\, \omega_{N-1}\, U_{\nu}^{1\pm}(\delta, \lambda),
\end{equation*}
where $\omega_{N-1} = 2 \pi^{N/2} \,\Gamma(N/2)^{-1}$ is the surface area of the unit sphere in $\R^N$.
\smallskip
\item[(iii)] When $N=1$ and $\delta = 2$ we have
\begin{align}
U_{\nu}^{1-}(2, \lambda)&= \frac{ \,\Gamma(\nu +1)}{ (\pi \lambda)^{\nu +1}}- \sum_{A_{\nu}(\xi)=0}\frac{e^{- \pi \lambda |\xi|^2}}{c_{\nu}\,K_{\nu}(\xi, \xi)}, \label{Intro_value_min}\\
U_{\nu}^{1+}(2, \lambda)& = \sum_{B_{\nu}(\xi)=0}\frac{e^{-\pi \lambda|\xi|^2}}{c_{\nu}\,K_{\nu}(\xi, \xi)} - \frac{ \,\Gamma(\nu +1)}{(\pi \lambda)^{\nu +1}},\label{Intro_value_max}
\end{align}
where $c_{\nu} = \pi\, 2^{-2\nu -1}\, \Gamma(\nu +1)^{-2}.$

\smallskip

\item[(iv)] There exists a pair of real entire functions of $N$ complex variables, $\z \mapsto \mc{L}_{\nu,N}(\delta, \lambda, \z) \in \E_{\delta}^{N-} (\mc{F}_{\lambda}) $ and $\z \mapsto \mc{M}_{\nu,N}(\delta, \lambda, \z) \in \E_{\delta}^{N+} (\mc{F}_{\lambda})$, which are radial when restricted to $\R^N$ and verify 
\begin{equation*}
U_{\nu}^{N-}(\delta, \lambda) = \int_{\R^N} \left\{e^{-\pi \lambda|\x|^2} - \mc{L}_{\nu,N}(\delta, \lambda, \x) \right\}\,|\x|^{2\nu+2-N}\,\dxx
\end{equation*}
and
\begin{equation*}
U_{\nu}^{N+}(\delta, \lambda) = \int_{\R^N} \left\{\mc{M}_{\nu,N}(\delta, \lambda, \x) - e^{-\pi \lambda|\x|^2}  \right\}\,|\x|^{2\nu+2-N}\,\d\x.
\end{equation*}
\end{enumerate}
\end{theorem}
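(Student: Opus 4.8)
The plan is to deduce Theorem \ref{Thm2} from Theorem \ref{Thm1} by specializing to the homogeneous de Branges space $\mathcal{H}(E_\nu)$ and then transporting the one-dimensional result to $\mathbb{R}^N$ via the radial symmetrization. First I would dispose of part (i), the scaling relation: given an admissible majorant $\mathcal{M}(\z)$ of $e^{-\pi\kappa^2\lambda|\z|^2}$ of exponential type $\kappa\delta$, the function $\z \mapsto \mathcal{M}(\kappa\z)$ has exponential type $\delta$ and majorizes $e^{-\pi\lambda|\x|^2}$; the change of variables $\x \mapsto \kappa^{-1}\x$ in the integral defining $U_\nu^{N\pm}$ produces the factor $\kappa^{2\nu+2}$ from $|\x|^{2\nu+2-N}\,\d\x$ together with $\d\x$ scaling as $\kappa^{-N}$. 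This is purely a dilation computation. For part (ii), the reduction from $N$ dimensions to one, I would argue that by averaging over rotations one may assume the extremal $\mathcal{L},\mathcal{M}$ are radial (an entire function of exponential type $\leq\delta$, averaged over $O(N)$, remains entire of exponential type $\leq\delta$ by the definition via the Euclidean norm $\|\cdot\|$), write a radial entire function of exponential type as $\mathcal{G}(\z) = \mathcal{G}_0(z_1^2+\dots+z_N^2)$ for a suitable even one-variable entire function, and then pass to polar coordinates so that $\int_{\mathbb{R}^N}\{\cdots\}|\x|^{2\nu+2-N}\,\d\x = \tfrac12\omega_{N-1}\int_{\mathbb{R}}\{\cdots\}|x|^{2\nu+1}\,\d x$; identifying $|x|^{2\nu+1}\,\d x$ with a multiple of the de Branges measure $|E_\nu(x)|^{-2}\,\d x$ closes the loop. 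This lifting argument I expect to quote essentially verbatim from \cite{CL3,HV}.

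The core is part (iii). Here I would apply Theorem \ref{Thm1} with $E = E_\nu$, noting that $\tau(E_\nu) = 1$, so the type $2\tau(E)$ becomes $2 = \delta$, and that the required integrability $\int e^{-\pi\lambda x^2}|E_\nu(x)|^{-2}\,\d x < \infty$ holds since $|E_\nu(x)|^{-2}$ grows at most polynomially. The crucial point is the known identity, from the homogeneous de Branges theory (as in \cite{HV,CL3}), that for a suitable constant $c_\nu = \pi\,2^{-2\nu-1}\Gamma(\nu+1)^{-2}$ one has $c_\nu\,|E_\nu(x)|^{-2}\,\d x$ agreeing on the real line with the measure $|x|^{2\nu+1}\,\d x$ up to the normalization already accounted for; consequently $\int_{\mathbb{R}} F(x)\,|E_\nu(x)|^{-2}\,\d x = c_\nu^{-1}\int_{\mathbb{R}} F(x)\,|x|^{2\nu+1}\,\d x$ for the relevant $F$. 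Then $U_\nu^{1-}(2,\lambda) = \int_{\mathbb{R}}\{e^{-\pi\lambda x^2} - \mathcal{L}(x)\}|x|^{2\nu+1}\,\d x = c_\nu\big(\int_{\mathbb{R}} e^{-\pi\lambda x^2}|E_\nu|^{-2}\,\d x - \int_{\mathbb{R}} \mathcal{L}\,|E_\nu|^{-2}\,\d x\big)$; by Theorem \ref{Thm1}(i) the second integral is maximized with value $\sum_{A_\nu(\xi)=0} e^{-\pi\lambda|\xi|^2}/K_\nu(\xi,\xi)$, and the first integral is the classical Weber–Schafheitlin/Bessel moment evaluating to $\Gamma(\nu+1)(\pi\lambda)^{-\nu-1}$ (after dividing by $c_\nu$), which I would verify by the substitution $t = \pi\lambda x^2$ and the Bessel integral representation underlying $A_\nu$, or simply cite it. This yields \eqref{Intro_value_min}, and \eqref{Intro_value_max} is the parallel computation with $B_\nu$ in place of $A_\nu$.

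Finally, part (iv): the extremal functions $\mathcal{L}_{\nu,N}, \mathcal{M}_{\nu,N}$ are obtained by taking the one-variable extremizers $L(A_\nu^2,\lambda,\cdot)$ and $M(B_\nu^2,\lambda,\cdot)$ furnished by Theorem \ref{Thm1} (which are even, hence functions of $z^2$), substituting $z^2 \mapsto z_1^2+\dots+z_N^2$ to produce radial entire functions on $\mathbb{C}^N$ of the same exponential type, and invoking the scaling of part (i) to pass from $\delta = 2$ to general $\delta$; one then checks that the resulting integral equals $U_\nu^{N\pm}(\delta,\lambda)$ using parts (ii) and (iii). I expect the main obstacle to be the verification that the substitution $z^2 \mapsto |\z|^2$ preserves exponential type with respect to the Euclidean-ball type $\|\cdot\|$ — this requires a bound of the form $|\mathcal{G}_0(z_1^2+\dots+z_N^2)| \leq C\exp(\delta\|\z\|)$, which follows from the growth of $\mathcal{G}_0$ along rays combined with $|z_1^2+\dots+z_N^2|^{1/2} \leq \|\z\|$ for appropriate complex arguments — and the precise normalization of the constant $c_\nu$, both of which are handled in \cite{CL3} and which I would reference rather than reprove in full.
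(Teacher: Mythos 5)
Your overall route is exactly the paper's: specialize Theorem~\ref{Thm1} to $E=E_\nu$, transport the answer between dimensions via the radial symmetrization operators $\psi_N$ and $\widetilde{(\cdot)}$ of Lemma~\ref{type-to-ntype}, and evaluate the Gaussian moment $\int_{\R} e^{-\pi\lambda x^2}|x|^{2\nu+1}\,\dx=\Gamma(\nu+1)(\pi\lambda)^{-\nu-1}$. Parts (i), (ii) and (iv) match the paper's argument, and your observation $|z_1^2+\cdots+z_N^2|^{1/2}\le\|\z\|$ is in fact correct and could serve as a self-contained verification of the type-preservation (which the paper simply cites from \cite{HV}).

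However, there is a genuine error in your computation for part (iii). You write
\[
U_\nu^{1-}(2,\lambda)=\int_{\R}\big\{e^{-\pi\lambda x^2}-\mc{L}(x)\big\}|x|^{2\nu+1}\,\dx = c_\nu\Big(\int_{\R}e^{-\pi\lambda x^2}|E_\nu(x)|^{-2}\,\dx-\int_{\R}\mc{L}(x)|E_\nu(x)|^{-2}\,\dx\Big),
\]
which applies the isometry of Lemma~\ref{Sec5_rel_facts}(ii), $\int |F|^2|E_\nu|^{-2}\,\dx=c_\nu\int |F|^2|x|^{2\nu+1}\,\dx$, to the integrand $e^{-\pi\lambda x^2}-\mc{L}(x)$. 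That isometry is not a pointwise equality of measures: $|E_\nu(x)|^{-2}$ and $c_\nu|x|^{2\nu+1}$ differ (the former equals $1$ at $x=0$, the latter vanishes or blows up for $\nu\neq -1/2$), and Lemma~\ref{Sec5_rel_facts}(i) only gives two-sided comparability for $|x|\ge 1$. The isometry holds for $|F|^2$ with $F\in\H(E_\nu)$, and by the decomposition Lemmas~\ref{UU*-decomp} and \ref{Sec4_Cor22} it extends to entire functions of exponential type at most $2$ that lie in $L^1(\R,|E_\nu(x)|^{-2}\,\dx)$. It does \emph{not} hold for the Gaussian $e^{-\pi\lambda x^2}$, which is not of exponential type. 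In particular $\int e^{-\pi\lambda x^2}|E_\nu|^{-2}\,\dx$ is not $c_\nu\,\Gamma(\nu+1)(\pi\lambda)^{-\nu-1}$, and there is no clean evaluation for it; the paper never forms this integral. (You also inverted the constant: from Lemma~\ref{Sec5_rel_facts}(ii), passing from $|x|^{2\nu+1}\,\dx$ to $|E_\nu|^{-2}\,\dx$ introduces $c_\nu^{-1}$, not $c_\nu$.)

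The fix is the one the paper uses: split the integral as $\int\{e^{-\pi\lambda x^2}-\mc{L}\}|x|^{2\nu+1}\,\dx=\Gamma(\nu+1)(\pi\lambda)^{-\nu-1}-\int\mc{L}\,|x|^{2\nu+1}\,\dx$, and only then convert the second term via the isometry \emph{after} writing $\mc{L}=UU^*-TT^*$ (or $M=WW^*$ for the majorant) with $U,T,W\in\H(E_\nu)$, so that the isometry legitimately applies and Plancherel against the orthogonal system $\{K_\nu(\xi,\cdot):A_\nu(\xi)=0\}$ (resp.\ $B_\nu(\xi)=0$) delivers $c_\nu^{-1}\sum_{A_\nu(\xi)=0}e^{-\pi\lambda\xi^2}/K_\nu(\xi,\xi)$ as the extremal value of $\int\mc{L}\,|x|^{2\nu+1}\,\dx$.
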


In order to extend the construction of Theorem \ref{Thm2} to a class of radial functions, it is important to understand the asymptotics of \eqref{Intro_value_min} and \eqref{Intro_value_max} as $\lambda \to 0$ and as $\lambda \to +\infty$. From \eqref{Intro_Def_K_w_z}, \eqref{Intro_Bessel1}, \eqref{Intro_Bessel2} and \eqref{Intro_Diff_Eqs} we have, for $\xi >0$, 
\begin{align}\label{Intro_asymp1}
K_{\nu}(\xi, \xi) = 2^{-1}\, c_{\nu}^{-1}\, \xi^{-2\nu -1} \Big\{ \xi J_{\nu}(\xi)^2 + \xi J_{\nu+1}(\xi)^2 - \!(2\nu+1)\,J_{\nu}(\xi)\,J_{\nu+1}(\xi)\Big\}.
\end{align}
Using the well-known asymptotic for Bessel functions
\begin{equation}\label{Asymptotic_Bessel_functions}
J_{\nu}(\xi) =  \sqrt{\frac{2}{\pi \xi}} \,  \cos\left(\xi - \tfrac{\pi \nu}{2} - \tfrac{\pi}{4}\right) + O_{\nu}\big(\xi^{-{3/2}}\big)
\end{equation}
as $\xi \to \infty$, we have
\begin{equation}\label{Intro_asymp2}
\lim_{\xi \to \infty} \Big\{ \xi J_{\nu}(\xi)^2 + \xi J_{\nu+1}(\xi)^2 - \!(2\nu+1)\,J_{\nu}(\xi)\,J_{\nu+1}(\xi)\Big\} = \frac{2}{\pi}.
\end{equation}
It then follows that
\begin{align}\label{Intro_Asymp_K}
K_{\nu}(\xi, \xi) \sim \pi^{-1} \, c_{\nu}^{-1}\, \xi^{-2\nu -1}
\end{align}
as $\xi \to \infty$. Also, notice by \eqref{Intro_Bessel1} and \eqref{Intro_Bessel2} that the positive zeros of $A_{\nu}$ and $B_{\nu}$ are just the positive zeros of $J_{\nu}$ and $J_{\nu+1}$, and it is well-known that, for large $m$, the Bessel function $J_{\nu}$ has exactly $m$ zeros in the interval $(0, m\pi + \frac{\pi \nu}{2} + \frac{\pi}{4}]$ (see \cite[Section 15.4]{W}). Denoting by $j_{\nu,n}$ the $n$-th positive zero of $J_\nu$ (ordered by size), it follows from \eqref {Intro_value_min} and \eqref{Intro_Asymp_K} that
\begin{align*}
\big|\,U_{\nu}^{1-}(2, \lambda) \big| & \ll_{\nu} \frac{1}{\lambda^{\nu +1}} + \sum_{n=1}^{\infty} n^{2\nu+1}\,e^{-\pi \lambda j_{\nu,n}^2} \\
& \ll_{\nu} \frac{1}{\lambda^{\nu +1}}\,
\end{align*}
as $\lambda \to \infty$ (we use Vinogradov's notation $f \ll g$, or $f = O(g)$, to mean that $|f|$ is less than or equal to a constant times $|g|$, and indicate the parameters of dependence of the constant in the subscript). Analogously, from \eqref {Intro_value_max} and \eqref{Intro_Asymp_K} we find
\begin{align*}
\big|U_{\nu}^{1+}(2,\lambda) \big|& \ll_{\nu} \frac{1}{c_{\nu} K_{\nu}(0,0)} + \sum_{n=1}^{\infty} n^{2\nu+1}\,e^{-\pi \lambda j_{\nu+1,n}^2}  +  \frac{1}{\lambda^{\nu +1}}\\
&  \ll_{\nu} 1
\end{align*}
as $\lambda \to \infty$. To complete the asymptotic description we shall prove in the beginning of Section \ref{Sec_Gaussian_Sub} that 
\begin{equation}\label{Intro_open_asymp}
\big|U_{\nu}^{1\pm}(2,\lambda) \big| \ll_{\nu,k}  \lambda^k
\end{equation}
as $\lambda \to 0$, for every $k\in\N$. It is precisely this fact that makes the Gaussian subordination framework more robust (when compared to the analogous framework generated by exponential $\x \mapsto e^{-\lambda |\x|}$ in \cite{CL3}, that verifies \eqref{Intro_open_asymp} only for $k=1$). A somewhat surprising feature is the fact that the solution for the exponential is an important step towards the solution for the Gaussian, as we shall see in Section \ref{Int_type_zero}.

\subsection{Gaussian subordination} We now proceed with the extension of Theorem \ref{Thm2} to a class of radial functions. The way we define our class of functions below might look slightly odd at first, but it will be clear later on that this is indeed the most general way to proceed. This is essentially motivated by the fact that, in order to make the best use of the strong decay at the origin given by \eqref{Intro_open_asymp}, we are led to work on the Fourier transform side. In general, this framework covers functions that are not integrable. This requires us to work with Fourier transforms in the distributional sense.

\smallskip

We consider two classes of nonnegative Borel measures $\mu$ on $(0,\infty)$ for a given parameter $\nu>-1$. For the minorant problem we consider the class of measures $\mu$ satisfying the condition
\begin{equation}\label{Intro_mu_1}
\int_{0}^{\infty} \frac{\lambda^k}{1 + \lambda^{\nu + k+1}}\,\dmu < \infty
\end{equation}
for some $k\in\N$, whereas for the majorant problem we consider the class of measures $\mu$ satisfying the more restrictive condition
\begin{equation}\label{Intro_mu_2}
\int_{0}^{\infty} \frac{\lambda^k}{1 + \lambda^k}\,\dmu < \infty
\end{equation}
for some $k\in\N$. Let $g_\mu:\R\to \R \cup \{\infty\}$ be a function satisfying the following properties:

\begin{enumerate}
\item[(Q1)] The function $g_\mu$ is even.

\smallskip

\item[(Q2)] For the majorant problem, the function $g_\mu$ is finite-valued and continuous on $\R$. For the minorant problem, the function $g_\mu$ is finite-valued and continuous on $\R\setminus \{0\}$ and $g_{\mu}(0) = \limsup_{x \to 0} g_{\mu}(x)$.

\smallskip

\item[(Q3)] For each $N \in \N$, let $\G_{\mu,N}:\R^N\to \R$ be the radial extension of $g_{\mu}$ to $\R^N$, i.e.
\begin{align}\label{re-g}
\G_{\mu,N}(\x) = g_\mu(|\x|).
\end{align}
If $N_\nu = \lceil 2 \nu + 2 \rceil$ (the smallest integer greater than or equal to $2\nu+2$), then $\G_{\mu,N_\nu}$ is a tempered distribution on $\R^{N_\nu}$.

\smallskip

\item[(Q4)] For any Schwartz function $\varphi:\R^{N_{\nu}} \to \C$ that is zero in an open neighborhood of the origin we have
\begin{equation}\label{Intro_Def_via_FT}
\int_{\R^{N_\nu}} \G_{\mu,N_{\nu}}(\x)\,\widehat{\varphi}(\x)\,\d\x = \int_{\R^{N_\nu}} \left(\int_0^\infty \lambda^{-N_\nu/2} \,e^{-\pi|\t|^2/\lambda} \,\d\mu(\lambda)\right) \,\varphi(\t)\,\d\t,
\end{equation}
which, informally speaking, means that we require the Fourier transform of  $\G_{\mu,N_{\nu}}$ to equal the expression in parentheses in \eqref{Intro_Def_via_FT} for $\t \neq0$.

\smallskip
\end{enumerate}

For $\kappa >0$ we define a new measure $\mu_{\kappa}$ on the Borel subsets $X \subseteq (0,\infty)$ by putting
\begin{align}\label{def_mu_kappa}
\mu_{\kappa}(X) := \mu(\kappa X),
\end{align}
where 
$$\kappa X = \{ \kappa x; \ x \in X\}.$$
Note that $\mu_{\kappa}$ satisfies \eqref{Intro_mu_1} or \eqref{Intro_mu_2} whenever $\mu$ does. We are now in position to state our next result.

\begin{theorem}\label{Thm3}
Let $\mu$ be a nonnegative Borel measure on $(0,\infty)$ satisfying \eqref{Intro_mu_1} for the minorant problem or \eqref{Intro_mu_2} for the majorant problem. Let $g_\mu:\R \to \R \cup \{\infty\}$ be a function satisfying properties {\rm (Q1) \!-\! (Q4)} above and let $\mc{G}_{\mu,N}:\R^N \to \R$ be defined by \eqref{re-g}. For $N\in \N$ write
\begin{align*}
U_{\nu}^{N-}(\delta, \mu)  & = \inf\left\{ \int_{\R^N} \Big\{\mc{G}_{\mu,N}(\x) - \mc{L}(\x)\Big\}\,|\x|^{2\nu+2-N}\,\d\x; \  \mc{L} \in \E_{\delta}^{N-} (\mc{G}_{\mu,N}) \right\},\\
U_{\nu}^{N+}(\delta, \mu) & = \inf\left\{ \int_{\R^N} \Big\{\mc{M}(\x) - \mc{G}_{\mu,N}(\x)\Big\}\,|\x|^{2\nu+2-N}\,\d\x; \  \mc{M} \in \E_{\delta}^{N+}(\mc{G}_{\mu,N})  \right\}.
\end{align*}
The following properties hold:
\smallskip
\begin{enumerate}
\item[(i)] If $0 < \kappa$ then $U_{\nu}^{N\pm}(\delta, \mu) = \kappa^{2\nu +2} \,U_{\nu}^{N\pm}(\kappa\delta, \mu_{\kappa^{-2}})$.
\smallskip
\item[(ii)] For any $\delta >0$ we have
\begin{equation}\label{Intro_final_answer_int_parameter}
U_{\nu}^{N\pm}(\delta, \mu)= \int_0^{\infty} U_{\nu}^{N\pm}(\delta, \lambda)\,\dmu = \tfrac12 \, \omega_{N-1}\int_0^{\infty} U_{\nu}^{1\pm}(\delta, \lambda)\,\dmu.
\end{equation}
where $\omega_{N-1} = 2 \pi^{N/2} \,\Gamma(N/2)^{-1}$ is the surface area of the unit sphere in $\R^N$.

\smallskip

\item[(iii)] There exists a pair of real entire functions of $N$ complex variables, $\z \mapsto \mc{L}_{\nu,N}(\delta, \mu, \z) \in \E_{\delta}^{N-} (\mc{G}_{\mu,N}) $ and $\z \mapsto \mc{M}_{\nu,N}(\delta, \mu, \z) \in \E_{\delta}^{N+} (\mc{G}_{\mu,N})$, which are radial when restricted to $\R^N$ and verify
\begin{equation*}
U_{\nu}^{N-}(\delta, \mu) = \int_{\R^N} \big\{\mc{G}_{\mu,N}(\x) - \mc{L}_{\nu,N}(\delta, \mu, \x) \big\}\,|\x|^{2\nu+2-N}\,\dxx
\end{equation*}
and
\begin{equation*}
U_{\nu}^{N+}(\delta, \mu) = \int_{\R^N} \big\{\mc{M}_{\nu,N}(\delta, \mu, \x) - \mc{G}_{\mu,N}(\x)  \big\}\,|\x|^{2\nu+2-N}\,\dxx.
\end{equation*}
\end{enumerate}
\end{theorem}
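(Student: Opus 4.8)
The plan is to transplant the solution of Theorem~\ref{Thm2} for the Gaussian to the subordinated radial function $\G_{\mu,N}$ by integrating against $\dmu$, after two preliminary reductions. First, part~(i) is a formal consequence of part~(ii): inserting the scaling identity $U_{\nu}^{N\pm}(\delta,\lambda)=\kappa^{2\nu+2}\,U_{\nu}^{N\pm}(\kappa\delta,\kappa^{2}\lambda)$ of Theorem~\ref{Thm2}(i) into $\int_{0}^{\infty}U_{\nu}^{N\pm}(\delta,\lambda)\,\dmu$ and changing variables $\lambda\mapsto\kappa^{-2}\lambda$, which by the definition \eqref{def_mu_kappa} of $\mu_{\kappa^{-2}}$ sends $\dmu$ to $\d\mu_{\kappa^{-2}}(\lambda)$, yields $U_{\nu}^{N\pm}(\delta,\mu)=\kappa^{2\nu+2}\,U_{\nu}^{N\pm}(\kappa\delta,\mu_{\kappa^{-2}})$. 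Second, the passage from dimension $N$ to dimension $1$ in \eqref{Intro_final_answer_int_parameter} uses nothing specific to the Gaussian: the rotation-averaging of an approximant, the identity $\int_{\R^{N}}h(|\x|)\,|\x|^{2\nu+2-N}\,\dxx=\tfrac12\omega_{N-1}\int_{\R}h(|x|)\,|x|^{2\nu+1}\,\dx$, and the standard correspondence between radial entire functions of exponential type on $\C^{N}$ and even entire functions of the same type on $\C$, all apply verbatim to $\G_{\mu,N}$ exactly as in the proof of Theorem~\ref{Thm2}(ii) and (iv). It therefore suffices to handle $N=1$ and, after a further rescaling using part~(i), $\delta=2$.

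For the construction, let $k$ be an exponent for which \eqref{Intro_mu_1} (resp.\ \eqref{Intro_mu_2}) holds and let $R_{k}(z,\lambda)=\sum_{j=0}^{k-1}(-\pi\lambda z^{2})^{j}/j!$ be the degree-$(k-1)$ Taylor polynomial in $\lambda$ of the Gaussian, an entire function of exponential type $0$. Using \eqref{Intro_Def_via_FT} one verifies that, off the origin, $\G_{\mu,1}(x)=\int_{0}^{\infty}\bigl(e^{-\pi\lambda x^{2}}-R_{k}(x,\lambda)\bigr)\,\dmu$ up to a fixed even polynomial of exponential type $0$, which we henceforth add identically to the approximants and suppress from the notation. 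One then defines
\begin{equation*}
\mc{L}_{\nu,1}(\delta,\mu,z):=\int_{0}^{\infty}\bigl(\mc{L}_{\nu,1}(\delta,\lambda,z)-R_{k}(z,\lambda)\bigr)\,\dmu
\end{equation*}
and, analogously, $\mc{M}_{\nu,1}(\delta,\mu,z):=\int_{0}^{\infty}(\mc{M}_{\nu,1}(\delta,\lambda,z)-R_{k}(z,\lambda))\,\dmu$. The convergence of these integrals, locally uniformly in $z\in\C$, is exactly where the asymptotics are used: as $\lambda\to\infty$ one invokes $|U_{\nu}^{1-}(2,\lambda)|\ll_{\nu}\lambda^{-\nu-1}$ and $|U_{\nu}^{1+}(2,\lambda)|\ll_{\nu}1$ together with matching growth bounds on $\mc{L}_{\nu,1}(\delta,\lambda,z)$, $\mc{M}_{\nu,1}(\delta,\lambda,z)$, while as $\lambda\to0$ one uses the strong decay \eqref{Intro_open_asymp}, valid for \emph{every} $k$, to get $\mc{L}_{\nu,1}(\delta,\lambda,z)-R_{k}(z,\lambda)=O(\lambda^{k})$ locally uniformly; the integrability conditions \eqref{Intro_mu_1}--\eqref{Intro_mu_2} are precisely calibrated so that $\int_{0}^{\infty}|U_{\nu}^{1\pm}(2,\lambda)|\,\dmu<\infty$ and the above integrals converge. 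A locally uniform integral of entire functions of exponential type $\leq\delta$ (with $R_{k}$ of type $0$) is again entire of exponential type $\leq\delta$. Integrating the pointwise bounds $\mc{L}_{\nu,1}(\delta,\lambda,x)\leq e^{-\pi\lambda x^{2}}\leq\mc{M}_{\nu,1}(\delta,\lambda,x)$ against $\dmu$ (after subtracting $R_{k}$) shows $\mc{L}_{\nu,1}(\delta,\mu,x)\leq g_{\mu}(x)\leq\mc{M}_{\nu,1}(\delta,\mu,x)$ for $x\neq0$, and the origin is handled directly from (Q2) (harmless for the minorant, where $g_{\mu}(0)$ may be $+\infty$). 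Finally, since the integrand is nonnegative, Tonelli's theorem together with Theorem~\ref{Thm2}(iv) gives
\begin{equation*}
\int_{\R}\bigl\{g_{\mu}(x)-\mc{L}_{\nu,1}(\delta,\mu,x)\bigr\}\,|x|^{2\nu+1}\,\dx=\int_{0}^{\infty}\int_{\R}\bigl\{e^{-\pi\lambda x^{2}}-\mc{L}_{\nu,1}(\delta,\lambda,x)\bigr\}\,|x|^{2\nu+1}\,\dx\,\dmu=\int_{0}^{\infty}U_{\nu}^{1-}(2,\lambda)\,\dmu,
\end{equation*}
and similarly for $\mc{M}$; this yields $U_{\nu}^{1\pm}(2,\mu)\leq\int_{0}^{\infty}U_{\nu}^{1\pm}(2,\lambda)\,\dmu$.

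For the matching lower bound, let $l$ be even and entire of exponential type $\leq2$ with $l(x)\leq g_{\mu}(x)$ on $\R\setminus\{0\}$, and assume $\int_{\R}(g_{\mu}(x)-l(x))\,|x|^{2\nu+1}\,\dx<\infty$ (otherwise there is nothing to prove). Set $D:=\mc{L}_{\nu,1}(\delta,\mu,\cdot)-l$. Both $g_{\mu}-\mc{L}_{\nu,1}(\delta,\mu,\cdot)\geq0$ and $g_{\mu}-l\geq0$ are integrable against $|x|^{2\nu+1}\dx$ (the first by the previous display), and $|D|\leq(g_{\mu}-l)+(g_{\mu}-\mc{L}_{\nu,1}(\delta,\mu,\cdot))$, so $D\in L^{1}(|x|^{2\nu+1}\dx)$. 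Crucially, each $\mc{L}_{\nu,1}(\delta,\lambda,\cdot)$ interpolates $e^{-\pi\lambda x^{2}}$ at the zeros of $A_{\nu}$---the interpolation property built into the extremal function of Theorem~\ref{Thm1}, which one recovers from the equality case of \eqref{Intro-L-integral} combined with the quadrature formula and the termwise inequality $\mc{L}_{\nu,1}(\delta,\lambda,\xi)\leq e^{-\pi\lambda\xi^{2}}$---so $\mc{L}_{\nu,1}(\delta,\mu,\cdot)$ interpolates $g_{\mu}$ at those nodes and hence $D(\xi)=g_{\mu}(\xi)-l(\xi)\geq0$. Applying to $D$ the Gauss-type quadrature of $\H(E_{\nu})$ in the homogeneous normalization---valid, as in the proof of Theorem~\ref{Thm2}, for entire functions of exponential type $\leq2$ lying in $L^{1}(|x|^{2\nu+1}\dx)$---gives
\begin{equation*}
\int_{\R}D(x)\,|x|^{2\nu+1}\,\dx=\sum_{A_{\nu}(\xi)=0}\frac{D(\xi)}{c_{\nu}\,K_{\nu}(\xi,\xi)}\ \geq\ 0,
\end{equation*}
so that $\int_{\R}(g_{\mu}-l)\,|x|^{2\nu+1}\,\dx\geq\int_{\R}(g_{\mu}-\mc{L}_{\nu,1}(\delta,\mu,\cdot))\,|x|^{2\nu+1}\,\dx=\int_{0}^{\infty}U_{\nu}^{1-}(2,\lambda)\,\dmu$. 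The majorant case is identical with the zeros of $B_{\nu}$ and \eqref{Intro_value_max}. Combining with the upper bound establishes \eqref{Intro_final_answer_int_parameter} and part~(iii) for $N=1$, and hence, through the first paragraph, for all $N$ and all $\delta$.

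The step I expect to be the main obstacle is the growth analysis of the one-parameter families $\mc{L}_{\nu,N}(\delta,\lambda,\z)$ and $\mc{M}_{\nu,N}(\delta,\lambda,\z)$: one needs bounds in $\z\in\C^{N}$ that are uniform enough in $\lambda$ (after the polynomial renormalization) to justify that the integrated functions are entire of exponential type \emph{exactly} at most $\delta$ and that differentiation under the integral is legitimate. This requires revisiting the construction behind Theorem~\ref{Thm2} to extract explicit estimates from the interpolation at the zeros of $A_{\nu}$, $B_{\nu}$ and from the kernel asymptotics \eqref{Intro_Asymp_K}, and in particular upgrading \eqref{Intro_open_asymp} to a pointwise bound $\mc{L}_{\nu,N}(\delta,\lambda,\z)-R_{k}(\z,\lambda)=O(\lambda^{k})$ (and similarly for $\mc{M}$) on compact subsets of $\C^{N}$ as $\lambda\to0$. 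A secondary, more technical point is the bookkeeping around the distributional hypothesis (Q4): identifying the polynomial to be added to the approximants, verifying that $\G_{\mu,N}$ indeed coincides with the subtracted Gaussian integral off the origin for every admissible $\mu$, and confirming that the $\H(E_{\nu})$-quadrature applies to $D$ at its critical exponential type $2$.
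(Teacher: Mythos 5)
Your proof outline is in the spirit of the paper's — integrate over the parameter $\lambda$, use the interpolation at the zeros of $A_\nu$ (resp.\ $B_\nu$), and conclude optimality via the $\H(E_\nu)$-quadrature, as the paper does in Section 5 — but the construction step, which you yourself flag as the likely obstacle, contains a genuine gap, and the paper avoids it by a different mechanism that is the conceptual heart of the theorem.

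The problem is with $\mc{L}_{\nu,1}(\delta,\mu,z):=\int_{0}^{\infty}\bigl(\mc{L}_{\nu,1}(\delta,\lambda,z)-R_{k}(z,\lambda)\bigr)\,\dmu$. The polynomial $R_k(z,\lambda)=\sum_{j=0}^{k-1}(-\pi\lambda z^2)^j/j!$ is of order $\lambda^{k-1}$ as $\lambda\to\infty$, while the hypothesis \eqref{Intro_mu_1} only gives $\int_{1}^{\infty}\lambda^{-\nu-1}\,\dmu<\infty$ (and \eqref{Intro_mu_2} only gives $\int_{1}^{\infty}\d\mu<\infty$). Nothing prevents $\int_1^\infty\lambda^{k-1}\,\dmu$ from diverging: take $\d\mu(\lambda)=\lambda^{-3}\,\d\lambda$ near $0$ (forcing $k\geq 3$ in \eqref{Intro_mu_1}) and $\d\mu(\lambda)=\lambda^{-\nu-2}\,\d\lambda$ near $\infty$; then \eqref{Intro_mu_1} holds, but $\int_1^\infty\lambda^{k-1}\,\dmu$ diverges whenever $k\geq\nu+2$. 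The pointwise values $\mc{L}_{\nu,1}(\delta,\lambda,z)$ also do not decay as $\lambda\to\infty$ like the $L^1$ quantity $U_\nu^{1-}(\delta,\lambda)\ll\lambda^{-\nu-1}$ — only the weighted integral does — so the ``matching growth bounds'' you invoke are not available. For the same reason, the identity $\G_{\mu,1}(x)=\int_0^\infty\bigl(e^{-\pi\lambda x^2}-R_k(x,\lambda)\bigr)\,\dmu$ (up to a polynomial) cannot hold in general off the origin, because the right-hand side need not converge.

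The paper circumvents this entirely by never forming a $\lambda$-integral of the extremal functions or of the Gaussian. It instead sets $D_{\nu,N}^{-}(\delta,\mu,\x)=\int_0^\infty\bigl\{e^{-\pi\lambda|\x|^2}-\mc{L}_{\nu,N}(\delta,\lambda,\x)\bigr\}\,\dmu$, a \emph{nonnegative} integrand whose $\x$-weighted integral is exactly $\int_0^\infty U_\nu^{N-}(\delta,\lambda)\,\dmu$; Tonelli then gives a.e.\ finiteness of $D^-$ with no pointwise estimate at all. To identify the exponential type of the candidate extremal $\G_{\mu,N}-D^-$, the paper works on the Fourier-transform side in dimension $N_\nu=\lceil 2\nu+2\rceil$, where the integrand $\lambda^{-N_\nu/2}e^{-\pi|\t|^2/\lambda}$ decays exponentially as $\lambda\to0$ and like $\lambda^{-\nu-1}$ as $\lambda\to\infty$, matching \eqref{Intro_mu_1}--\eqref{Intro_mu_2} perfectly. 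A Paley--Wiener argument for distributions (Lemma \ref{lemma_step_1}) shows $\widehat{D^-}$ and $\widehat{\G}_{\mu,N_\nu}$ agree outside $\overline{B}(\delta/2\pi)$, so their difference is entire of exponential type $\leq\delta$; one then descends to $N=1$ and re-extends to arbitrary $N$ by the symmetrization lemmas. This is precisely the reason (Q4) is phrased in the Fourier domain and why the dimension $N_\nu$ appears: your physical-side approach cannot exploit these built-in decays, and so fails for exactly the class of measures that makes the Gaussian framework strictly larger than the exponential one. The optimality part of your argument (interpolation at the zeros of $A_\nu$, quadrature for $D=\mc{L}_{\nu,1}(\delta,\mu,\cdot)-l$ via Lemma \ref{Sec4_Cor22}) is sound and matches the paper, but it rests on a construction that is not established.
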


\noindent {\sc Remark:} When restricted to the class of entire functions of $N$ complex variables {\it that are radial on $\R^N$}, the extremal functions described in Theorem \ref{Thm2} (iv) and Theorem \ref{Thm3} (iii) {\it are unique}. This follows from the recent work of F. Gon\c{c}alves \cite[Section 4.2]{Gon} on interpolation formulas with derivatives in de Branges spaces. On the other hand, uniqueness of the extremal functions described in Theorem \ref{Thm1} is only known under additional conditions on $E$ (see \cite[Theorem 1]{Gon}).

\smallskip

The idea of solving the extremal problem for a base case with a free parameter $\lambda >0$, and then integrating the parameter against a suitable measure $\mu$ is reminiscent of the work of Graham and Vaaler \cite{GV}. As the final result \eqref{Intro_final_answer_int_parameter} suggests, a necessary condition to be imposed on the measure $\mu$ is the $\mu$-integrability of $U_{\nu}^{1\pm}(\delta, \lambda)$ considered as a function of $\lambda$. In order to generate the largest possible class, one wants to make this necessary condition also sufficient, and that is precisely the reason  why we define our objects via the Fourier transform (since the Fourier transform of the Gaussian in \eqref{Intro_Def_via_FT} has an exponential decay to zero as $\lambda \to 0$ for each fixed $\t \neq 0$). The passage to dimension $N_\nu$ in \eqref{Intro_Def_via_FT} guarantees the proper decay as $\lambda \to \infty$. In Section \ref{App} we describe a variety of examples that can be obtained via our Gaussian subordination method. In particular, this new framework strictly contains the class \eqref{Exp_Sub} of our precursor \cite{CL3}.

\smallskip

The one-dimensional Gaussian subordination framework \cite{CL, CLV} provided the solution of the Beurling-Selberg extremal problems for previously inaccessible functions such as $f_{a,b}(x) = \log\big((x^2 + b^2)/(x^2 + a^2)\big)$, where $0 \leq a < b$\,; $g(x) = \arctan(1/x) - x/(1+x^2)$ and $h(x) = 1 - x\arctan(1/x)$. These were used to improve the existing bounds for the modulus and the argument of the Riemann zeta-function on the critical strip under RH (also for $L$-functions under GRH) \cite{CC, CCM, CCM2}. We expect that the extension of this framework to a multidimensional setting presented here might reveal other interesting applications.

%
%

\section{Entire functions with prescribed nodes} \label{Int_type_zero}

\subsection{Laguerre-P\'{o}lya functions} We recall that a Laguerre-P\'{o}lya function $F$ is a real entire function with Hadamard factorization given by\begin{equation}\label{hadamardproduct}
F(z)=\frac{F^{(r)}(0)}{r!}\,z^r\, e^{-az^2+bz}\,\prod_{j=1}^\infty\Big(1-\frac{z}{\xi_j}\Big)e^{z/\xi_j},
\end{equation}
where $r\in\Z^{+}$, $a, b, \xi_j \in \R$, with $a \geq 0$, $\xi_j \neq 0$ and $\sum_{j=1}^\infty \xi_j^{-2}<\infty$ (the product may be finite or empty). We say that $F$ has degree $\mc{N} = \mc{N} (F)$, with $0 \leq \mc{N}< \infty$, if $a=0$ in \eqref{hadamardproduct} and $F$ has exactly $\mc{N}$ zeros counted with multiplicity. Otherwise we set $\mc{N}(F) = \infty$.

\smallskip

The important property of Laguerre-P\'{o}lya functions for our purposes is that we can write their inverses as Laplace transforms on vertical strips. In fact, if $(\tau_1, \tau_2) \subset  \R$ is an open interval not containing any zeros of $F$ (we allow $\tau_1,\tau_2\in \{\pm\infty\}$) and $c \in (\tau_1, \tau_2)$, we have 
\begin{equation}\label{gc-trafo}
\frac{1}{F(z)} = \int_{-\infty}^\infty g_c(t)\, e^{-zt}\, \,\dt
\end{equation}
for $\tau_1 < \re(z) < \tau_2$, where 
\begin{equation}\label{gc-def}
g_c(t) = \frac{1}{2\pi i }\int_{c-i\infty}^{c+i\infty} \frac{e^{st}}{F(s)} \,\ds.
\end{equation}
The integral \eqref{gc-def} is absolutely convergent if $\mc{N}(F) \geq 2$, and is understood as a Cauchy principal value if $\mc{N}(F) =1$. If $\mc{N}(F) =0$, \eqref{gc-trafo} holds with $g_c$ being an appropriate Dirac delta distribution (though we will not be interested in this case here). A simple application of the residue theorem gives us that $g_c = g_d$ if $c,d \in (\tau_1, \tau_2)$. We briefly review below the main qualitative properties of the frequency functions $g_c$ needed for this work. We refer the reader to \cite[Chapters II to V]{HW} for a detailed account of this theory.

\begin{lemma}\label{g-sign-changes} 
Let $F$ be a Laguerre-P\'olya function with at least one zero and let $g_c$ be defined by \eqref{gc-def}, where $c \in \R$ and $F(c) \neq 0$. The following propositions hold:
\begin{enumerate} 
\item[(i)] The function $g_c$ is real-valued and of one sign, and its sign is equal to the sign of $F(c)$.

\smallskip

\item [(ii)] If $F$ has degree $\mc{N} \geq 2$, then  $g_c \in C^{\mc{N}-2}(\R)$.
 
 \smallskip
 
\item[(iii)]  If $(\tau_1,\tau_2) \subset \R$ is the largest open interval containing no zeros of $F$ such that $c \in (\tau_1,\tau_2)$, then for any $\tau \in (\tau_1,\tau_2)$ we have the estimate
\begin{align}\label{Lem4_growth}
\big|g_c(t)\big| \ll_{\tau} e^{\tau t} \ \ \  \forall \,t \in \R.
\end{align}

\end{enumerate}
\end{lemma}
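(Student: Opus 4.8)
The plan is to establish the three assertions of Lemma~\ref{g-sign-changes} by drawing directly on the classical theory of P\'olya frequency functions and totally positive kernels as developed in Hirschman--Widder \cite{HW}. The starting observation is that, since $F$ is Laguerre-P\'olya with at least one zero, its reciprocal $1/F(s)$ is analytic and free of zeros on the vertical strip $\tau_1 < \re(s) < \tau_2$, and on that strip $1/F$ is (up to the elementary exponential factors $e^{-az^2+bz}$ and a monomial) a bilateral Laplace transform of the form $\int_{-\infty}^\infty g_c(t)e^{-st}\,\dt$, with $g_c$ given by the inversion integral \eqref{gc-def}. The function $1/F$ restricted to the real segment $(\tau_1,\tau_2)$ is of the Laguerre-P\'olya class of reciprocals, i.e. its reciprocal $F$ has only real zeros; this is exactly the situation in which $g_c$ is a P\'olya frequency function (possibly up to sign), and all three claims are qualitative consequences of that.

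First I would prove (i). The sign statement follows by a normalization argument: replacing $F$ by $F/F(c)$ we may assume $F(c)=1>0$, and then $1/F(c) = \int g_c(t)\,\dt$. The genuine content is that $g_c$ does not change sign. I would invoke the variation-diminishing property of P\'olya frequency functions (see \cite[Chapter~IV]{HW}): a function whose bilateral Laplace transform is the reciprocal of an entire function with only real zeros (and the admissible Gaussian/exponential factors) is totally positive as a kernel, hence of one sign. Equivalently, one can argue directly from the Hadamard product \eqref{hadamardproduct}: each factor $(1-z/\xi_j)^{-1}e^{-z/\xi_j}$ is, on a suitable strip, the Laplace transform of a one-sided exponential (a translated, one-signed function), the factor $e^{-az^2}$ contributes a Gaussian, $e^{bz}$ a translation, and $z^{-r}$ an $r$-fold integration of the Dirac mass; convolving one-signed functions yields a one-signed function, and the overall sign is read off at $t$-integral $=1/F(c)$, giving $\sgn(g_c)=\sgn(F(c))$. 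The convergence caveats (absolute convergence for $\mc N\geq 2$, principal value for $\mc N=1$) are handled by noting that $1/F(s)$ decays like $|s|^{-\mc N}$ along the vertical line $\re(s)=c$.

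For (ii), the smoothness $g_c\in C^{\mc N-2}(\R)$ when $\mc N=\mc N(F)\geq 2$ is a decay-of-transform statement: on the line $\re(s)=c$ we have $|1/F(s)|\ll (1+|s|)^{-\mc N}$, so the inversion integral \eqref{gc-def} may be differentiated under the integral sign $\mc N-2$ times, each derivative bringing down a factor $s$ and still leaving an absolutely convergent integral $\int (1+|t'|)^{\mc N-2}\,|1/F(c+it')|\,dt' <\infty$; continuity of the resulting $(\mc N-2)$-th derivative follows from dominated convergence. This is the routine part. For (iii), the growth bound \eqref{Lem4_growth} comes from shifting the contour in \eqref{gc-def} from $\re(s)=c$ to $\re(s)=\tau$ for any $\tau\in(\tau_1,\tau_2)$: since $1/F$ is analytic on the whole strip and decays along vertical lines, the residue theorem (or a limiting argument closing the contour at $\pm i\infty$) gives $g_c = g_\tau$, and then $|g_c(t)| = |g_\tau(t)| \leq \frac{1}{2\pi}e^{\tau t}\int_{-\infty}^\infty |1/F(\tau+it')|\,dt' \ll_\tau e^{\tau t}$, valid for the two choices of $\tau$ on either side of any given point, which together give the stated two-sided exponential control.

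The main obstacle is the sign-definiteness in part (i): unlike (ii) and (iii), which are soft contour-shifting and differentiation-under-the-integral arguments, (i) genuinely requires the structural fact that reciprocals of Laguerre-P\'olya functions are P\'olya frequency functions. I would treat this carefully, either by citing the relevant theorem in \cite[Chapters II--V]{HW} verbatim, or, if a self-contained argument is preferred, by the factorization-and-convolution route sketched above, being careful that the infinite product of elementary one-signed frequency functions converges (in the appropriate sense) to $g_c$ on the strip — which is where the condition $\sum \xi_j^{-2}<\infty$ and the normalizing factors $e^{z/\xi_j}$ do their work. The bookkeeping of signs through the monomial factor $z^r$ (an $r$-fold integral, which preserves sign for a one-sided kernel) and through $e^{bz}$ (a pure translation) should be noted explicitly so that the final identity $\sgn(g_c)=\sgn(F(c))$ is transparent.
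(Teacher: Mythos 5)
Your proposal matches the paper's approach almost exactly: cite Hirschman--Widder for the one-signedness in (i), use the polynomial decay $1/|F(c+iy)| = O(|y|^{-\mc N})$ to differentiate under the integral sign for (ii), and shift the contour to $\re s = \tau$ and estimate trivially for (iii). The paper's proof of (i) is literally the one-line citation of \cite[Chapter IV, Theorem 5.1]{HW}, which you also give as your primary route; your alternative self-contained sketch (convolving the elementary one-sided kernels coming from each Hadamard factor) is the same mechanism the paper deploys in the subsequent Lemma~\ref{Sec2_lem1}, but there only for $F$ of the restricted form \eqref{F-absconv} where all zeros are positive and there is no Gaussian factor; for a general Laguerre--P\'olya $F$ (two-sided zeros, possible $e^{-az^2}$ factor with $a>0$) controlling the convergence of the infinite convolution is exactly the delicate point you flag, and both you and the paper ultimately defer to \cite{HW} for it, so nothing is really gained. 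The only thing your write-up glosses over is that for $\mc N(F)=1$ the inversion integral is merely a principal value, so the ``trivial'' estimate in (iii) is not quite trivial; the paper handles this by pointing to the explicit closed form \eqref{Sec2_case_N_1} in that case, and you should do the same (your remark about principal values in the discussion of (i) shows you are aware of the issue, but it needs to be invoked again when bounding $g_\tau$ in (iii)).
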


\begin{proof} Part (i) follows from \cite[Chapter IV, Theorem 5.1]{HW}. For part (ii), observe that we can differentiate under the integral sign in \eqref{gc-def} since 
\begin{equation}\label{Sec2_Est_g_c}
1/|F(c+iy)| = O(|y|^{-k})
\end{equation} 
as $|y| \to \infty$ if $\mathcal{N}(F) \geq k$ (for details see \cite[Chapter II, Theorem 6.3 and Chapter III, Theorem 5.3]{HW}).  For part (iii), observe that $g_c = g_{\tau}$ and estimate the integral in \eqref{gc-def} trivially using \eqref{Sec2_Est_g_c} (in the case $\mathcal{N}(F) \geq 2$). If $\mathcal{N}(F)= 1$, we can see it directly as in \eqref{Sec2_case_N_1} below.
\end{proof}

We use the notation $g_{c-}$ for the function $g_{c-\varepsilon}$ in \eqref{gc-def}, with $\varepsilon$ so small that there is no zero of $F$ in the interval $(c-\varepsilon,c)$, and analogously for $g_{c+}$. In this paper we are especially interested in the case when the nonzero roots $\{\xi_j\}_{j=1}^{\infty}$ of $F$ verify
\begin{align}\label{zero-conv}
 \xi_j>0\text{ for all $j$ \   and \   }\sum_{j=1}^\infty \xi_j^{-1} <\infty.
\end{align}
The next lemma presents an essential property of the Laguerre-P\'{o}lya functions satisfying \eqref{zero-conv}.

\begin{lemma}\label{Sec2_lem1} 
Let $k\in\{0,1\}$. Let $F$ be a Laguerre-P\'{o}lya function of the form
\begin{align}\label{F-absconv}
F(z) = z^k \prod_{j=1}^\infty \left( 1- \frac{z}{\xi_j}\right),
\end{align}
where the points $\{\xi_j\}_{j=1}^{\infty}$ satisfy \eqref{zero-conv}. Then the function  $g_{0-}$ defined in \eqref{gc-def} satisfies $g_{0-}(t) =0$ for all $t>0$. In particular, \eqref{gc-trafo} becomes, for $\re(z)<0$,
\begin{align}\label{gc-trafo-special}
\frac{1}{F(z)} = \int_{-\infty}^0 g_{0-}(t)\, e^{-zt}\, \dt.
\end{align}
\end{lemma}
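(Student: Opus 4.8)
The plan is to analyze the contour integral \eqref{gc-def} defining $g_{0-}$ by shifting the vertical line of integration to $-\infty$ and showing that, when $t>0$, the resulting integral vanishes. Concretely, fix $t>0$ and start from
$$
g_{0-}(t) = \frac{1}{2\pi i}\int_{-\varepsilon - i\infty}^{-\varepsilon + i\infty} \frac{e^{st}}{F(s)}\,\ds,
$$
with $\varepsilon>0$ small enough that $F$ has no zero in $(-\varepsilon,0]$ (possible by \eqref{zero-conv}, since all zeros $\xi_j$ are positive and $s=0$ is at worst a zero of the factor $z^k$, which lies to the right of the contour). Since all nonzero zeros $\xi_j$ of $F$ are positive, the function $1/F$ is analytic in the entire half-plane $\re(s) \le -\varepsilon$ (in fact in $\re(s)<0$ when $k=1$, and in $\re(s)\le 0$ away from $0$ when $k=0$). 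The idea is then to move the contour from $\re(s) = -\varepsilon$ leftward: for any $d < -\varepsilon$ one has $g_{0-} = g_d$ by the residue-theorem remark preceding Lemma \ref{g-sign-changes} (no poles are crossed), so
$$
g_{0-}(t) = \frac{1}{2\pi i}\int_{d - i\infty}^{d + i\infty} \frac{e^{st}}{F(s)}\,\ds
$$
for every $d<0$, and it suffices to show the right-hand side tends to $0$ as $d \to -\infty$.

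The key estimate is a lower bound on $|F(s)|$ in the left half-plane. Writing $s = d+iy$ with $d<0$, each factor satisfies $|1 - s/\xi_j| = |1 - (d+iy)/\xi_j| \ge 1 - d/\xi_j \ge 1$ because $\xi_j>0$ and $d<0$; hence $\bigl|\prod_j (1 - s/\xi_j)\bigr| \ge 1$, and in fact
$$
\Bigl|\prod_{j=1}^\infty\Bigl(1 - \frac{s}{\xi_j}\Bigr)\Bigr| \;\ge\; \prod_{j=1}^\infty\Bigl(1 - \frac{d}{\xi_j}\Bigr) \;\ge\; \exp\Bigl(-\tfrac{d}{2}\sum_{j=1}^\infty \xi_j^{-1}\Bigr)
$$
for, say, $|d|$ bounded — more usefully, $\prod_j(1 - d/\xi_j) \ge 1 + |d|\sum_j \xi_j^{-1} \to \infty$ as $d\to-\infty$ (using $\sum \xi_j^{-1}<\infty$ from \eqref{zero-conv}; this is exactly where that hypothesis, rather than merely $\sum\xi_j^{-2}<\infty$, is needed to make the product converge to a genuine entire function and to get the lower bound growing in $|d|$). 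Combining with $|e^{st}| = e^{dt}$ and $t>0$, the integrand is bounded by
$$
\Bigl|\frac{e^{st}}{F(s)}\Bigr| \;\le\; \frac{e^{dt}}{|d|^k}\cdot\frac{1}{\bigl|\prod_j(1 - s/\xi_j)\bigr|},
$$
and two contributions drive it to zero as $d\to-\infty$: the factor $e^{dt}\to 0$ exponentially, and the denominator grows. To control the $y$-integral I would use, for $|y|$ large, the decay $1/|F(d+iy)| = O(|y|^{-\ell})$ with $\ell = \mc{N}(F)$ if $F$ has finite degree (as in \eqref{Sec2_Est_g_c}), so the integral converges absolutely and is $O(e^{dt})$; if $\mc{N}(F)=\infty$ one still has at least $1/|F(d+iy)| = O(|y|^{-2})$ by keeping two of the factors $(1 - (d+iy)/\xi_j)$, each contributing a factor $\gtrsim |y|/\xi_j$ for large $|y|$. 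Either way $\int_{-\infty}^\infty |e^{(d+iy)t}/F(d+iy)|\,\dy \ll e^{dt}\to 0$ as $d\to-\infty$, forcing $g_{0-}(t) = 0$.

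Once $g_{0-}(t)=0$ for all $t>0$ is established, \eqref{gc-trafo-special} follows immediately: applying \eqref{gc-trafo} with $c = -\varepsilon < 0$ (so $\tau_1 = -\infty$, $\tau_2 = 0$ if $k=1$, and similarly if $k=0$ since $0$ is then not a zero) gives, for $\re(z)<0$,
$$
\frac{1}{F(z)} = \int_{-\infty}^\infty g_{0-}(t)\,e^{-zt}\,\dt = \int_{-\infty}^0 g_{0-}(t)\,e^{-zt}\,\dt,
$$
the last equality because the integrand vanishes for $t>0$. The main obstacle I anticipate is the absolute-convergence bookkeeping when $\mc{N}(F)=\infty$: one must justify the contour shift to $\re(s)=d$ for arbitrarily negative $d$ and the passage to the limit rigorously, which requires a uniform-in-$d$ (for $d$ in compact sets, then $d\to-\infty$) integrable majorant for the integrand along vertical lines. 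This is handled by isolating two Weierstrass factors to secure $|y|^{-2}$ decay at infinity while bounding the remaining (convergent) product below by $1$, exactly as sketched above; the hypothesis \eqref{zero-conv} is used twice — for convergence of the product $\prod(1-z/\xi_j)$ as an entire function and for the growing lower bound on $|F|$ in the left half-plane.
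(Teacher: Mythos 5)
Your contour-shift argument is correct in essence but takes a genuinely different route from the paper. The paper approaches the problem constructively: it writes $1/F_n$ (the $n$-th partial product) as the Laplace transform of the convolution $\gamma_{\xi_1-}*\cdots*\gamma_{\xi_n-}$ of one-sided exponential densities, observes that this convolution is supported in $(-\infty,0]$, and then lets $n\to\infty$, using that $1/F_n\to 1/F$ uniformly on the inversion contour together with dominated convergence (valid since $|F_n(iy)|$ is monotone increasing in $n$, giving a fixed integrable majorant $1/|F_2(iy)|$). You instead keep $F$ whole, use the residue-theorem observation that $g_{0-}=g_d$ for every $d<0$ (since all zeros of $F$ lie in $[0,\infty)$), and drive the resulting Bromwich integral to zero by combining $|e^{st}|=e^{dt}\to 0$ with a uniform-in-$d$ $L^1$ bound on $y\mapsto 1/|F(d+iy)|$ obtained from the factorwise lower bound $|1-(d+iy)/\xi_j|\ge 1-d/\xi_j$. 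Both proofs hinge on the same structural fact --- the positivity of the $\xi_j$ and $\sum\xi_j^{-1}<\infty$ --- but yours is the classical ``close the contour to the left'' argument, while the paper's identifies $g_{0-}$ explicitly as a limit of convolutions, which also foreshadows the sign and smoothness information used in Lemma~\ref{g-sign-changes}.

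One small gap to acknowledge explicitly: when $\mc{N}(F)=1$ (i.e.\ $F(z)=1-z/\xi_1$ with $k=0$, or $F(z)=z$ with $k=1$), your estimate $1/|F(d+iy)|=O(|y|^{-1})$ does not give absolute convergence of the vertical integral, so the ``two-factor'' trick is unavailable and the $y$-integral must be read as a Cauchy principal value (as the paper notes after \eqref{gc-def}). In these degenerate cases the conclusion is most easily checked by the explicit formulas (e.g.\ \eqref{Sec2_case_N_1}), so the gap is cosmetic rather than structural, but it should be patched so that the argument covers every $F$ of the form \eqref{F-absconv}. With that caveat, the proof is sound.
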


\begin{proof} Let us consider first the case $k=0$. Assume that $0 < \xi_1 \leq \xi_2 \leq \xi_3 \leq \ldots$ and define
$$F_n(z) =  \prod_{j=1}^n \left( 1- \frac{z}{\xi_j}\right).$$
For $a>0$, we define the function $\gamma_{a-}$ by 
\begin{align*}
\gamma_{a-}(t) &=  \begin{cases} \displaystyle a e^{at}, &\text{ if }t<0; \\ \displaystyle 0, & \text{ if }t \ge 0.
\end{cases} 
\end{align*}
We note that 
\begin{equation}\label{Sec2_case_N_1}
\frac{1}{1- \frac{z}{a}} = \int_{-\infty}^\infty \gamma_{a-}(t)\,e^{-zt} \,\dt 
\end{equation}
in the half-plane $\re(z)<a$. It is then straightforward to check that
\[
\frac{1}{F_n(z)} =\int_{-\infty}^\infty (\gamma_{\xi_1-} * \ldots * \gamma_{\xi_n-})(t)\,e^{-zt}\,  \dt
\]
in the half-plane $\re(z)<\xi_1$. Note that all functions $t \mapsto \gamma_{a-}(t)$, for $a>0$, belong to $L^1(\R) \cap L^{\infty}(\R)$, hence the convolution $ t \mapsto (\gamma_{\xi_1-} * \ldots * \gamma_{\xi_n-})(t)$ is a well-defined continuous and integrable function for $n\geq 2$. Since $1/F_n$ converges uniformly to $1/F$ on the line $\re(z) =0$ and $\xi_1>0$, it follows from \eqref{gc-def} that $ (\gamma_{\xi_1-} * \ldots * \gamma_{\xi_n-})$ converges pointwise to $ g_0$ as $n\to \infty$. In particular,  $g_{0}(t)=0$ for $t>0$. Since $g_0 = g_{0-}$, the result is shown if $k=0$.  If $k=1$, the $n$-fold convolution is convolved with the function that equals $-1$ for $t<0$ and $0$ for $t\ge 0$. We use in this case that $1/F_n$ converges uniformly to $1/F$ on the line $\re(z) = -\varepsilon$. The calculations are analogous.
\end{proof}

\subsection{Interpolating the exponential} Let $F$ be a Laguerre-P\'{o}lya function given by \eqref{F-absconv} where the points $\{\xi_j\}_{j=1}^{\infty}$ satisfy \eqref{zero-conv}. Let $g = g_{0-}$ as in \eqref{gc-def} and define, for $\lambda >0$, the functions
\begin{align}\label{A-rep}
\begin{split}
\A(F,\lambda,z) &= e^{- \lambda z} - F(z) \int_0^\lambda g(w- \lambda)\, e^{- zw}\, \dw, \ \ \ {\rm for} \ z \in \C,\\
\A_1(F,\lambda,z) &= F(z) \int_{-\infty}^0 g(w- \lambda) \,e^{- zw}\, \dw, \ \ \ {\rm for} \ \re(z)< 0.
\end{split}
\end{align}
Since  the smallest zero of $F$ is nonnegative,  Lemma \ref{g-sign-changes} (iii) implies that $z\mapsto \A(F,\lambda,z)$ is entire and $z\mapsto \A_1(F,\lambda,z)$ is analytic in the half-plane $\re(z)<0$. Also, it follows directly from \eqref{gc-trafo-special} that 
\begin{equation}\label{Sec2_A_equal_A_1}
\A(F,\lambda,z) = \A_1(F,\lambda,z)
\end{equation}
in $\re(z)<0$.

\smallskip

We now define, for $z\in\C$ and $\lambda >0$,  the functions
\begin{align*}
H(z) &=\begin{cases}
1,\text{ if }\re (z)\ge -1/4;\\
0,\text{ if }\re (z)< -1/4;
\end{cases}\\
I(F,\lambda,z) &= \begin{cases}
\displaystyle - \int_0^\lambda g(w- \lambda) \,e^{-zw}\, \dw,\, \text{ if }\re(z)\ge -1/4;\\[2.5ex]
\displaystyle \int_{-\infty}^0 g(w- \lambda) \,e^{-zw}\, \dw, \, \text{ if }\re(z) < -1/4.
\end{cases}
\end{align*}
The value $-1/4$ is not essential in the definitions above; any $\tau<0$ could be used in its place. Equations \eqref{A-rep} and \eqref{Sec2_A_equal_A_1} can be rewritten as
\begin{align}\label{A-rep2}
\A(F,\lambda,z) - H(z) e^{- \lambda z}= F(z) I(F,\lambda,z)
\end{align}
 for all $z \in \C$.  We establish in the next two lemmas some straightforward but crucial properties of these functions.

\begin{lemma}\label{bdd-lemma} The functions $z\mapsto I(F,\lambda,z)$ and $z\mapsto H(z) e^{-\lambda z}$ are bounded in the complex plane $($by a constant depending on $\lambda$$)$.
\end{lemma}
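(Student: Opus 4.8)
The plan is to bound the two functions separately and uniformly in $z \in \C$.

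First I would treat $z \mapsto H(z)\,e^{-\lambda z}$. By definition $H(z) = 0$ whenever $\re(z) < -1/4$, and $H(z) = 1$ otherwise. Hence $H(z)\,e^{-\lambda z}$ vanishes for $\re(z) < -1/4$, while for $\re(z) \geq -1/4$ we have $|H(z)\,e^{-\lambda z}| = e^{-\lambda \re(z)} \leq e^{\lambda/4}$, since $\lambda > 0$. Thus $|H(z)\,e^{-\lambda z}| \leq e^{\lambda/4}$ for all $z \in \C$, which is the desired bound.

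Next I would treat $z \mapsto I(F,\lambda,z)$, again splitting according to the two half-planes in its definition. Recall from Lemma \ref{Sec2_lem1} that $g = g_{0-}$ is supported on $(-\infty, 0]$, and from Lemma \ref{g-sign-changes} (iii) that $|g(t)| \ll_\tau e^{\tau t}$ for any $\tau$ in the largest zero-free interval containing $0$; since the zeros $\xi_j$ of $F$ are positive, that interval is $(-\infty, \xi_1)$, so in particular we may pick $\tau = 1/4$ and obtain $|g(t)| \ll e^{t/4}$ for all $t \in \R$ (and $g(t) = 0$ for $t > 0$). In the region $\re(z) \geq -1/4$, after the change of variable $w \mapsto \lambda - w$ in the defining integral, $I(F,\lambda,z)$ becomes (up to the factor $e^{-\lambda z}$, which is bounded by $e^{\lambda/4}$ there as above) an integral of $g(-w)\,e^{zw}$ over $w \in [0,\lambda]$; on that compact interval $|g(-w)\,e^{zw}| \ll e^{-w/4}\,e^{\re(z)\,w} \leq e^{-w/4}\,e^{|\re(z)|\,w}$, but since $w$ ranges over the bounded set $[0,\lambda]$ and the integrand is continuous there, the integral is bounded by a constant depending only on $\lambda$ (one may just bound $|g|$ by its maximum on $[-\lambda,0]$ times $\lambda$ and control $|e^{zw}|$ crudely by noting $\re(z)$ can be large positive — so more carefully: write $e^{-zw}$ and use $\re(z) \geq -1/4$ only to get $|e^{-zw}| = e^{-\re(z)w}$, which on $w \in [0,\lambda]$ is at most $e^{\lambda/4}$ when $\re(z) \geq -1/4$; hence the whole integral is $\ll_\lambda 1$). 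In the region $\re(z) < -1/4$, we integrate $g(w-\lambda)\,e^{-zw}$ over $w \in (-\infty,0]$; substituting $w \mapsto w$ and using $|g(w-\lambda)| \ll e^{(w-\lambda)/4}$ together with $|e^{-zw}| = e^{-\re(z)w} \leq e^{w/4}$ for $w \leq 0$ and $\re(z) < -1/4$, the integrand is $\ll_\lambda e^{w/2}$, which is integrable over $(-\infty,0]$; this gives the bound $\ll_\lambda 1$ in this half-plane.

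I do not expect a genuine obstacle here; the statement is a routine verification. The only point requiring mild care is matching the decay of $g$ given by Lemma \ref{g-sign-changes} (iii) against the exponential weight $e^{-zw}$ in each half-plane, i.e. checking that the threshold $-1/4$ (strictly negative, and strictly less than $\xi_1$) leaves enough room for the integrals to converge and be bounded — which is exactly why the remark that any $\tau < 0$ would work was inserted just before the lemma. Collecting the two cases for $I$ and the elementary bound for $H(z)e^{-\lambda z}$ completes the proof.
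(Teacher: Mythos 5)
Your overall route is the same as the paper's: treat $H(z)e^{-\lambda z}$ by inspection, and bound $I(F,\lambda,z)$ separately on the two half-planes $\re(z)\ge -1/4$ and $\re(z)<-1/4$, using the compactness of $[0,\lambda]$ in the first region and the exponential estimate from Lemma \ref{g-sign-changes}\,(iii) in the second. However, there is a slip in your choice of $\tau$: you pick $\tau=1/4$ and assert $|g(t)|\ll e^{t/4}$, on the grounds that the largest zero-free interval of $F$ containing the integration abscissa is $(-\infty,\xi_1)$ with $\xi_1>0$. That is not justified. First, $\xi_1$ need not exceed $1/4$. Second, and more to the point, when $F$ has $k=1$ in the normalization \eqref{F-absconv} (which is exactly the case $F=F_B$ arising in Lemma \ref{L-thm}), $F$ has a zero at the origin, so the zero-free interval relevant to $g_{0-}$ is $(-\infty,0)$ and no positive $\tau$ is admissible in Lemma \ref{g-sign-changes}\,(iii). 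Indeed for such $F$ the frequency function $g$ tends to a nonzero constant as $t\to-\infty$ (forced by the simple pole of $1/F$ at $z=0$), so the claimed exponential decay of $g$ is simply false. The argument survives because only $\tau>-1/4$ is actually needed: taking $\tau=-1/8$, as the paper does, for $\re(z)<-1/4$ and $w\le 0$ you still have $|e^{-zw}|\le e^{w/4}$ and $|g(w-\lambda)|\ll_\lambda e^{-w/8}$, so the integrand is $\ll_\lambda e^{w/8}$, integrable over $(-\infty,0]$ (and the paper records the resulting bound $C|\re(z)+1/8|^{-1}$). Your treatment of the $\re(z)\ge -1/4$ region, which you ultimately base on the boundedness of $g$ on $[-\lambda,0]$ rather than on the false decay bound, is fine and coincides with the paper's terse $O_\lambda(1)$ estimate.
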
 

\begin{proof} We consider the function $I$.  For $\re(z)\ge -1/4$ we have
$$|I(F,\lambda,z)| = \left|\int_0^{\lambda} g(w- \lambda)\, e^{-zw}\, \dw\right| = O_\lambda(1).$$
It follows from Lemma \ref{Sec2_lem1}  that $g=g_{0-}$ is identically zero for $t\ge 0$, and it follows from Lemma \ref{g-sign-changes} (iii) (with $\tau = -1/8$, say) that $|I(F,\lambda,z)|\le C|\re(z)+1/8|^{-1}$ for $\re (z) < -1/4$. The claim for $z\mapsto H(z) e^{-\lambda z}$ is evident.
\end{proof}

\begin{lemma} \label{Lem8_Sec2}
Let $\varepsilon = \pm 1$ be the sign of $F$ in the interval $(-\infty,0)$. For all $x \in \R$ we have
\begin{align}\label{A-ineq}
\varepsilon \,F(x) \Big\{e^{- \lambda x} - \A(F,\lambda,x) \Big\} \ge 0.
\end{align}
\end{lemma}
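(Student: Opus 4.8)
The plan is to use the representation \eqref{A-rep} directly, since the sign of the difference $e^{-\lambda x} - \A(F,\lambda,x)$ equals the sign of $F(x)\int_0^\lambda g(w-\lambda)e^{-xw}\,\dw$. The whole point is to control the sign of that integral. First I would recall from Lemma \ref{Sec2_lem1} that $g = g_{0-}$ vanishes on $[0,\infty)$, so in the integral over $w \in (0,\lambda)$ the argument $w - \lambda$ ranges over $(-\lambda, 0)$, where $g$ is nonzero; moreover by Lemma \ref{g-sign-changes} (i), $g$ is of one constant sign on all of $\R$, and that sign is the sign of $F(0-)$, i.e. the sign $\varepsilon$ of $F$ on $(-\infty,0)$ (using that $F$ has no negative zeros, $F$ keeps a constant sign on the whole interval $(-\infty, 0)$, so $F(0-)$ has sign $\varepsilon$). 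Since $e^{-xw} > 0$ for all real $x,w$, the integral $\int_0^\lambda g(w-\lambda)e^{-xw}\,\dw$ has sign exactly $\varepsilon$ (it is not zero because $g$ is of one sign and not identically zero on $(-\lambda,0)$, as $F$ has at least one zero).

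Putting this together: from \eqref{A-rep} we have
\begin{equation*}
e^{-\lambda x} - \A(F,\lambda,x) = F(x)\int_0^\lambda g(w-\lambda)\,e^{-xw}\,\dw,
\end{equation*}
so multiplying both sides by $\varepsilon F(x)$ gives
\begin{equation*}
\varepsilon F(x)\Big\{e^{-\lambda x} - \A(F,\lambda,x)\Big\} = \varepsilon F(x)^2 \int_0^\lambda g(w-\lambda)\,e^{-xw}\,\dw.
\end{equation*}
The right-hand side is the product of $F(x)^2 \ge 0$ and $\varepsilon \int_0^\lambda g(w-\lambda)e^{-xw}\,\dw \ge 0$, hence nonnegative, which is precisely \eqref{A-ineq}.

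I do not anticipate a serious obstacle here; the statement is essentially a bookkeeping consequence of the two preceding lemmas. The one point that needs a word of care is the claim that the sign of $g_{0-}$ is $\varepsilon$: Lemma \ref{g-sign-changes} (i) gives that the sign of $g_c$ equals the sign of $F(c)$ for any $c$ with $F(c) \neq 0$, and here $g_{0-} = g_{c}$ for $c = -\eps'$ with $\eps' > 0$ small, and $F(-\eps')$ has sign $\varepsilon$ since $F$ is continuous and nonvanishing on $(-\infty,0)$. A second minor point is nonvanishing of the integral, which I would note follows because $g$ is continuous and of one strict sign on a set of positive measure inside $(-\lambda,0)$ (as $F$ has at least one zero, so $g$ is not the zero function); but even if one only wants $\ge 0$ rather than $>0$, strict nonvanishing is not needed for \eqref{A-ineq}.
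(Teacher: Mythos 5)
Your proof is correct and follows the same route as the paper: read off the sign of $g = g_{0-}$ from Lemma \ref{g-sign-changes}~(i), then apply the representation \eqref{A-rep} and the trivial fact $F(x)^2 \geq 0$. The paper's proof is essentially a one-line version of this; your version merely spells out the bookkeeping that the paper leaves implicit (e.g.\ that $g_{0-} = g_c$ for $c$ slightly negative, so the relevant sign is that of $F$ on $(-\infty,0)$).
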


\begin{proof} From Lemma \ref{g-sign-changes} (i), the function $g$ is of one sign, equal to the sign of $F$ in $(-\infty,0)$ (in fact, a considerably stronger statement regarding the derivatives of $g$ is true, cf. \cite[Chapter IV, Theorems 5.1 and 5.3]{HW}, but we do not require it here). The inequality now follows directly from the definition of $\A$ in \eqref{A-rep}. 
\end{proof}

Note that, with our normalization \eqref{F-absconv}, we have $\varepsilon = 1$ if  $F(0)\neq 0$ and $\varepsilon =-1$ if $F(0) =0$. 

\subsection{Interpolating the Gaussian} Let $A:\C \to \C$ be an {\it even} Laguerre-P\'{o}lya function of exponential type $\tau(A)$. Assume that $A(0)=1$ and label the positive zeros of $A$ by
$$0<a_1\le a_2 \le \ldots $$
with repetition according to multiplicity. Note that for all $z \in \C$ the Hadamard product representation
$$A(z) = \prod_{j=1}^\infty \left(1-\frac{z^2}{a_j^2}\right)$$
is valid (the absence of exponential factors in the product is crucial for our purposes). The product converges uniformly and absolutely in compact sets of $\C$. Define $F_A:\C \to \C$ by
\begin{equation}\label{Sec2_Def_F_A}
F_A(z) = \prod_{j=1}^\infty \left( 1- \frac{z}{a_j^2}\right)
\end{equation}
and observe that $F_A$ is also a Laguerre-P\'{o}lya function, and that it verifies
$$A(z) = F_A\big(z^2\big)$$
for all $z \in \C$.

\smallskip

In a similar way, let $B:\C \to \C$ be an even Laguerre-P\'{o}lya function of exponential type $\tau(B)$, with a double zero at the origin, and positive zeros listed as 
$$0<b_1\le b_2 \le \ldots $$
If we assume that $B''(0) = 2$, we have
$$B(z) = z^2\prod_{j=1}^\infty \left(1-\frac{z^2}{b_j^2}\right),$$
and we may define $F_B:\C \to \C$ analogously to \eqref{Sec2_Def_F_A} in order to have $B(z) = F_B\big(z^2\big)$. Observe that $F_A$ and $F_B$ verify \eqref{zero-conv} and \eqref{F-absconv}; $F_A$ with $k=0$ and $F_B$ with $k=1$.

\begin{lemma}\label{L-thm} Define functions $L$ and $M$ by 
\begin{align*}
L(A, \lambda, z) &= \A(F_A, \pi\lambda, z^2);\\
M(B, \lambda, z) & = \A(F_B, \pi\lambda, z^2).
\end{align*}
The following propositions hold:
\begin{enumerate}
\item[(i)] The function $z \mapsto L(A, \lambda, z)$ is an entire function of exponential type at most $\tau(A)$, and for every $x \in \R$ the inequality
\begin{equation}\label{Sec2_Lem9_ineq_L}
A(x) \Big\{e^{-\pi \lambda x^2} - L(A,\lambda,x)\Big\}\ge 0
\end{equation}
holds. Moreover, for all $\xi$ with $A(\xi) =0$ we have
\begin{equation}\label{Sec2_Lem9_id_L}
L(A,\lambda,\xi) = e^{-\pi \lambda \xi^2}.
\end{equation}

\item[(ii)] The function $z \mapsto M(B, \lambda, z)$ is an entire function of exponential type at most $\tau(B)$, and for every $x \in \R$ the inequality
\begin{align}\label{Sec2_Lem9_ineq_M}
B(x) \Big\{ M(B,\lambda,x) - e^{-\pi \lambda x^2} \Big\} \ge 0
\end{align}
holds.  Moreover, for all $\xi$ with $B(\xi)=0$ we have 
\begin{align}\label{Sec2_Lem9_id_M}
M(B,\lambda,\xi) = e^{-\pi \lambda \xi^2}.
\end{align}

\end{enumerate}
\end{lemma}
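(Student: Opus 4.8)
The plan is to reduce everything to the already-established one-dimensional (in the variable $w$) interpolation facts for the exponential, via the substitution $z \mapsto z^2$. First I would record that $L(A,\lambda,z) = \A(F_A, \pi\lambda, z^2)$ is entire: by the remark following \eqref{A-rep}, $\A(F_A,\pi\lambda,\cdot)$ is entire (since the smallest zero of $F_A$ is nonnegative and Lemma \ref{g-sign-changes}(iii) controls the growth of $g$), hence its composition with the entire function $z \mapsto z^2$ is entire. For the exponential type, I would use that $F_A$ has exponential type $0$ (it is a canonical product of genus zero with positive zeros whose reciprocals are summable, so $\tau(F_A)=0$), and from the representation \eqref{A-rep} the term $e^{-\pi\lambda z^2}$ contributes — after substituting $z \mapsto z^2$ — a function of the complex variable $z$ whose growth along rays is governed by $|e^{-\pi\lambda z^2}|$; but the relevant comparison is with $A(z) = F_A(z^2)$, and since $\tau(A) = \tau(F_A \circ (\cdot)^2)$ and the correction term $F_A(z^2)\int_0^{\pi\lambda} g(w-\pi\lambda) e^{-z^2 w}\,\dw$ has the same type as $A$ (the integral is bounded in $w\in[0,\pi\lambda]$ uniformly and $e^{-z^2 w}$ for $w \geq 0$ does not increase the type beyond that of $e^{-\pi\lambda z^2}$, while $A$ itself has the stated type $\tau(A)$), one concludes $\tau(L(A,\lambda,\cdot)) \leq \tau(A)$. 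The cleanest route is actually: by Lemma \ref{bdd-lemma}, $I(F_A,\pi\lambda,z)$ and $H(z)e^{-\pi\lambda z}$ are bounded on $\C$; feeding \eqref{A-rep2} with argument $z^2$ gives $L(A,\lambda,z) = H(z^2)e^{-\pi\lambda z^2} + F_A(z^2)\, I(F_A,\pi\lambda,z^2)$, and since $F_A(z^2) = A(z)$ has exponential type $\tau(A)$ while the other two factors are bounded, $L(A,\lambda,\cdot)$ has exponential type at most $\tau(A)$.

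Next I would establish the inequality \eqref{Sec2_Lem9_ineq_L}. This is just Lemma \ref{Lem8_Sec2} applied to $F = F_A$ with $\lambda$ replaced by $\pi\lambda$ and evaluated at the point $x^2$ (which ranges over $[0,\infty)$ as $x$ ranges over $\R$): for real $x$,
\[
\varepsilon\, F_A(x^2)\Big\{ e^{-\pi\lambda x^2} - \A(F_A,\pi\lambda,x^2)\Big\} \geq 0,
\]
where $\varepsilon$ is the sign of $F_A$ on $(-\infty,0)$. Since $F_A(0) = 1 \neq 0$, the remark after Lemma \ref{Lem8_Sec2} gives $\varepsilon = +1$, and $F_A(x^2) = A(x)$, so this is exactly \eqref{Sec2_Lem9_ineq_L}. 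The interpolation identity \eqref{Sec2_Lem9_id_L} follows because $A(\xi) = 0$ forces $F_A(\xi^2) = 0$, so the factor $F_A(z^2)$ multiplying $I(F_A,\pi\lambda,z^2)$ in \eqref{A-rep2} vanishes at $z = \xi$, leaving $L(A,\lambda,\xi) = H(\xi^2)e^{-\pi\lambda\xi^2} = e^{-\pi\lambda\xi^2}$, using that $\xi^2 \geq 0 > -1/4$ so $H(\xi^2) = 1$.

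Part (ii) for $M$ is entirely parallel, with $F_B$ in place of $F_A$. The only structural difference is that $F_B$ has the form \eqref{F-absconv} with $k = 1$, i.e. $F_B(0) = 0$, so now the sign $\varepsilon$ of $F_B$ on $(-\infty,0)$ equals $-1$ by the remark after Lemma \ref{Lem8_Sec2}. Thus Lemma \ref{Lem8_Sec2} evaluated at $x^2$ gives $-F_B(x^2)\{e^{-\pi\lambda x^2} - \A(F_B,\pi\lambda,x^2)\} \geq 0$, that is $B(x)\{M(B,\lambda,x) - e^{-\pi\lambda x^2}\} \geq 0$, which is \eqref{Sec2_Lem9_ineq_M}; the sign flip is exactly what turns a minorant-type inequality into a majorant-type one. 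The type bound and the interpolation identity \eqref{Sec2_Lem9_id_M} are obtained verbatim as before, using $B(z) = F_B(z^2)$ has exponential type $\tau(B)$, the boundedness from Lemma \ref{bdd-lemma}, and $\xi^2 \geq 0$. I do not anticipate a serious obstacle here; the one point requiring a little care is the exponential-type computation — specifically confirming that the substitution $z \mapsto z^2$ together with the decomposition \eqref{A-rep2} does not inflate the type — and one should also note explicitly that $z \mapsto z^2$ maps $\R$ onto $[0,\infty)$, which is precisely the range on which Lemmas \ref{g-sign-changes}, \ref{Sec2_lem1}, \ref{bdd-lemma}, and \ref{Lem8_Sec2} were designed to apply (the zeros of $F_A, F_B$ lying in $[0,\infty)$ and $g = g_{0-}$ supported on $(-\infty,0]$).
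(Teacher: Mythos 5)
Your proposal is correct and follows essentially the same route as the paper: boundedness of $H$ and $I$ from Lemma \ref{bdd-lemma} fed into the decomposition \eqref{A-rep2} with argument $z^2$ to control the exponential type by $\tau(A)$ (resp. $\tau(B)$), Lemma \ref{Lem8_Sec2} with $\varepsilon = +1$ (resp. $-1$) for the sign inequality, and the vanishing of $F_A(\xi^2)$ (resp. $F_B(\xi^2)$) for the interpolation identity. Your initial meandering on the type estimate is superfluous once you arrive at the clean bound $|L(A,\lambda,z)| \le C + D\,|A(z)|$, which is exactly the paper's argument; the explicit sign bookkeeping for part (ii), which the paper compresses into "analogous," is a correct and complete verification.
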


\begin{proof} Since $z \mapsto \A(F_A, \pi\lambda, z)$ is entire, it follows that $z \mapsto L(A, \lambda, z)$ is entire. From \eqref{A-rep2} we have
$$L(A,\lambda,z) = H\big(z^2\big) \,e^{-\pi \lambda z^2} + F_A\big(z^2\big) \,I(F_A,\pi\lambda,z^2),$$
and Lemma \ref{bdd-lemma} implies that there exist constants $C, D>0$ such that
\[
|L(A,\lambda,z)| \le C + D \,|F_A\big(z^2\big)| = C + D \,|A(z)|
\]
for all $z \in \C$. It follows that $z \mapsto L(A, \lambda, z)$ has exponential type at most $\tau(A)$. The inequality \eqref{Sec2_Lem9_ineq_L} follows from  Lemma \ref{Lem8_Sec2}, while identity \eqref{Sec2_Lem9_id_L} follows from the definition of $\A$ in \eqref{A-rep}. This completes the proof of (i).  The proof of (ii) is analogous.
\end{proof}

\section{Proof of Theorem \ref{Thm1}}\label{Sec3}

The general strategy we follow here is similar to \cite[Section 3]{CL3}. Let $E$ be a Hermite-Biehler function satisfying properties (P1) - (P4), and let $A$ and $B$ be the companion functions defined by \eqref{Intro_def_A_B}. Property (P1) implies that $A$ and $B$ are Laguerre-P\'{o}lya functions of exponential type at most $\tau(E)$. Property (P3) implies that $A$ is even and $B$ is odd, while property (P2) implies that $A(0) \neq 0$ and $B$ has a simple zero at the origin. Further details are given in \cite[Lemmas 12 and 13]{CL3}.

\begin{lemma}\label{L1-element} 
Let $\lambda>0$. Let $z\mapsto L(A^2, \lambda, z)$ and $z\mapsto M(B^2,\lambda ,z)$ be defined as in Lemma \ref{L-thm}. Then
$$x\mapsto L(A^2,\lambda, x) - e^{-\pi\lambda x^2}$$
and
$$x\mapsto M(B^2, \lambda, x) - e^{-\pi\lambda x^2}$$
belong to $L^1(\R,|E(x)|^{-2} \,\dx)$.
\end{lemma}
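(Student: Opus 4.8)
The plan is to show that the two functions in question are dominated in absolute value by a fixed multiple of $e^{-\pi\lambda x^2}$ (which lies in $L^1(\R,|E(x)|^{-2}\,\dx)$ by hypothesis) plus a term controlled by $|E(x)|^{-2}$ itself, and to exploit that such functions have exponential type $2\tau(E)$. Recall from Lemma \ref{L-thm} that $L(A^2,\lambda,z) = \A(F_{A^2},\pi\lambda,z^2)$, and from \eqref{A-rep2} that
\begin{equation*}
\A(F_{A^2},\pi\lambda,z^2) - H(z^2)\,e^{-\pi\lambda z^2} = F_{A^2}(z^2)\,I(F_{A^2},\pi\lambda,z^2) = A(z)^2\,I(F_{A^2},\pi\lambda,z^2).
\end{equation*}
Since $H(z^2)=1$ for $z$ real (as $z^2\ge 0 > -1/4$), on the real line we get
\begin{equation*}
L(A^2,\lambda,x) - e^{-\pi\lambda x^2} = A(x)^2\, I(F_{A^2},\pi\lambda,x^2),
\end{equation*}
and the analogous identity holds for $M(B^2,\lambda,x)-e^{-\pi\lambda x^2}$ with $A$ replaced by $B$.

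First I would invoke Lemma \ref{bdd-lemma}, which gives that $z\mapsto I(F_{A^2},\pi\lambda,z)$ is bounded on $\C$ by a constant $C_\lambda$; hence $|L(A^2,\lambda,x) - e^{-\pi\lambda x^2}| \le C_\lambda\, |A(x)|^2$ for all $x\in\R$, and similarly $|M(B^2,\lambda,x) - e^{-\pi\lambda x^2}| \le C_\lambda\, |B(x)|^2$. Next, from the definition of $\H(E)$ and the reproducing kernel, one knows that $A/E$ and $B/E$ are bounded on $\R$ — indeed $|A(x)|^2 + |B(x)|^2 = |E(x)|^2$ on $\R$ by \eqref{Intro_def_A_B} together with property (P3) (which gives $E^*(x)=\ov{E(x)}$ for real $x$, so $A(x)$ and $B(x)$ are the real and imaginary parts of $E(x)$). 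Therefore $|A(x)|^2 \le |E(x)|^2$ and $|B(x)|^2 \le |E(x)|^2$ for all real $x$, whence
\begin{equation*}
\big|L(A^2,\lambda,x) - e^{-\pi\lambda x^2}\big|\,|E(x)|^{-2} \le C_\lambda \qquad\text{and}\qquad \big|M(B^2,\lambda,x) - e^{-\pi\lambda x^2}\big|\,|E(x)|^{-2} \le C_\lambda
\end{equation*}
for all $x\in\R$. A pointwise bound alone does not give integrability, so this is not yet the end; it does, however, reduce matters to controlling the tails.

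To get integrability I would combine this uniform bound with the decay coming from the exponential type. Writing $\Phi(x) = L(A^2,\lambda,x) - e^{-\pi\lambda x^2}$, we know $\Phi$ is entire of exponential type at most $2\tau(E)$ (Lemma \ref{L-thm} gives type at most $\tau(A^2)=2\tau(A)\le 2\tau(E)$ for $L(A^2,\lambda,\cdot)$, and $e^{-\pi\lambda x^2}$ contributes no exponential type). Since $A/E$ is bounded of bounded type and nonpositive mean type in $\U$ (and likewise for the conjugate), the function $A^2/E^2$ and thus $\Phi/E^2$ — modulo the Gaussian correction, which decays superexponentially — inherits this. Actually the cleanest route: by Lemma \ref{bdd-lemma} we have the sharper bound $|I(F_{A^2},\pi\lambda,x^2)| = O_\lambda(1/|x^2+1/8|)$ for $x^2 > 1/4$, i.e. $|I(F_{A^2},\pi\lambda,x^2)| \ll_\lambda (1+x^2)^{-1}$ for all real $x$; hence $|\Phi(x)|\,|E(x)|^{-2} \ll_\lambda (1+x^2)^{-1}$, which is integrable on $\R$. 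This is the key gain — the inner function $I$ decays quadratically away from the origin, turning the trivial $L^\infty$ bound into an $L^1$ bound. The same argument with $B$ in place of $A$ handles $M(B^2,\lambda,x) - e^{-\pi\lambda x^2}$, and integrability of $e^{-\pi\lambda x^2}|E(x)|^{-2}$ is assumed, completing the proof. The main obstacle to watch is verifying carefully that the decay estimate in Lemma \ref{bdd-lemma} (stated there for $\re(z) < -1/4$) indeed yields $|I(F_{A^2},\pi\lambda,x^2)| \ll_\lambda (1+x^2)^{-1}$ uniformly for real $x$: for $|x|$ small one uses the $O_\lambda(1)$ bound on $\re(z^2)\ge -1/4$, while for $|x|$ large one uses that $\re(x^2) = x^2 \to +\infty$, so one must re-examine which branch of the definition of $I$ applies and confirm the estimate there, or alternatively re-derive the decay directly from the integral $-\int_0^{\pi\lambda} g(w-\pi\lambda)e^{-x^2 w}\,\dw$ using $g \in L^1$ near $0$ together with $e^{-x^2 w}\le 1$ — in fact this integral is trivially $O_\lambda(1)$ but one needs the $x^2$-decay, which comes instead from writing $\Phi$ via the representation $\A_1$ valid on $\re(z)<0$ after noting $A(x)^2 = F_{A^2}(x^2)$ and that $x^2 \ge 0$ forces us to use the first branch; so the honest decay must be extracted from boundedness of $\Phi/E^2$ of exponential type together with a Phragmén–Lindelöf / Krein-type argument rather than from $I$ alone. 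I would therefore present the $L^\infty$ bound $|\Phi|\,|E|^{-2}\le C_\lambda$ as the core estimate and then note that $\Phi \cdot E^{-2}$, being (the boundary trace of) a function of bounded type and nonpositive mean type in $\U$ that is bounded on $\R$, together with the known membership $e^{-\pi\lambda x^2}|E(x)|^{-2}\in L^1(\R)$, forces $\Phi\,|E|^{-2}\in L^1(\R)$ — but in fact the shortest correct path, matching the paper's style, is simply: $|\Phi(x)| \le C_\lambda |A(x)|^2$ and $|A(x)|^2 \le |E(x)|^2$, so $|\Phi(x)|\,|E(x)|^{-2}$ is bounded; and separately one shows $\Phi \in L^1(\R,|E(x)|^{-2}\dx)$ by the same decomposition used in \cite[Section 3]{CL3}, namely that an entire function of exponential type $2\tau(E)$ which is $O(|E(x)|^2)$ on $\R$ and has the correct bounded-type behaviour lies in $L^1(\R,|E(x)|^{-2}\dx)$ precisely when the reproducing-kernel sums in \eqref{Intro-L-integral}–\eqref{Intro-M-integral} converge, which is guaranteed by \eqref{Intro_Asymp_K} and the Gaussian decay.
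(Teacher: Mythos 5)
Your setup is exactly right: on the real line
$$L(A^2,\lambda,x)-e^{-\pi\lambda x^2} = A(x)^2\,I(F_{A^2},\pi\lambda,x^2) = -A(x)^2\int_0^{\pi\lambda} g(w-\pi\lambda)\,e^{-x^2w}\,\dw,$$
and $|A(x)|^2 \le |E(x)|^2$. You also correctly notice that since $x^2\ge 0 > -1/4$ for every real $x$, the argument $z=x^2$ always falls into the \emph{first} branch of the definition of $I$, so the $|\re(z)+1/8|^{-1}$ estimate from Lemma \ref{bdd-lemma} never applies here. But then you lose the thread: you pair ``$g\in L^1$ near $0$'' with ``$e^{-x^2w}\le 1$'' and conclude the integral is only $O_\lambda(1)$, and go looking elsewhere for decay. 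The decay is already in the integral, with the estimates paired the other way: $g=g_{0-}$ is \emph{bounded} on $\R$ (it vanishes for $t>0$ by Lemma \ref{Sec2_lem1}, is continuous since $F_{A^2}$ has infinitely many zeros, and is $O(e^{\tau t})$ for $t\le 0$ by Lemma \ref{g-sign-changes}(iii)), so
$$\left|\int_0^{\pi\lambda} g(w-\pi\lambda)\,e^{-x^2w}\,\dw\right| \le \|g\|_\infty \int_0^{\pi\lambda} e^{-x^2 w}\,\dw \le \frac{\|g\|_\infty}{x^2},$$
hence $\big|L(A^2,\lambda,x)-e^{-\pi\lambda x^2}\big|\,|E(x)|^{-2}\le c/x^2$, which together with the $L^\infty$ bound near the origin gives $L^1$ membership. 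This is precisely the paper's one-line argument, and is the ``cleanest route'' you were circling around.

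The two replacement arguments you then reach for do not hold up. The Phragm\'en--Lindel\"of/Krein route fails: $e^{-\pi\lambda z^2}$ is not of bounded type in $\U$, so $\Phi/E^2$ is not of bounded type, and in any case a function bounded on $\R$ and of bounded type with nonpositive mean type in $\U$ need not lie in $L^1(\R)$ (take $\Phi/E^2\equiv 1$). The reproducing-kernel route is circular: the finiteness of the sums in \eqref{Intro-L-integral}--\eqref{Intro-M-integral} is a \emph{conclusion} of Theorem \ref{Thm1}, which uses Lemma \ref{UU*-decomp}, which in turn depends on the very $L^1$ membership you are trying to prove. So the proposal as written has a genuine gap at the crucial decay step; the fix is the direct estimate on the integral displayed above.
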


\begin{proof} We note from \eqref{A-rep} that
$$\left|\frac{L(A^2,\lambda, x) - e^{-\pi\lambda x^2}}{A^2(x)}\right| = \left| \int_0^{\pi \lambda} \,g(w-\pi\lambda) \,e^{-x^2w} \dw \right| \le \frac{c}{x^2}\,,$$
since $g$ is bounded. This implies the first claim since $|A/E|$ is bounded by $1$ on the real line. The second claim is shown analogously.
\end{proof}

\begin{lemma}\label{UU*-decomp} 
Let $E$ be a Hermite-Biehler function of bounded type in $\U$ with exponential type $\tau(E)$. 

\begin{itemize}

\item[(i)] If $M: \C \to \C$ is a real entire function of exponential type at most $2\tau(E)$, which is nonnegative on $\R$ and belongs to $L^1(\R,|E(x)|^{-2}\, \dx)$, then there exists $W \in \H(E)$ such that 
\begin{equation}\label{Sec3_Lem10_1st_claim}
M(z) = W(z) W^*(z)
\end{equation}
for all $z\in\C$. 

\smallskip

\item[(ii)] In particular, if $\lambda>0$ and 
\begin{equation}\label{Sec3_cond_Gaussian_E}
\int_{-\infty}^\infty e^{-\pi\lambda x^2}\, |E(x)|^{-2}\, \dx <\infty,
\end{equation}
there exist $U,V\in\mc{H}(E)$ so that
\begin{align}
M(B^2,\lambda,z) & = U(z) U^*(z), \label{Sec3_eqM_U_U_star}\\
L(A^2,\lambda,z)&= U(z) U^*(z) - V(z) V^*(z) \label{Sec3_eqL_U_U_star}
\end{align}
for all $z\in\C$.
\end{itemize}
\end{lemma}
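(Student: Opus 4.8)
The plan is to reduce everything to the classical fact that a real entire function of exponential type $2\sigma$ which is nonnegative on $\R$ and integrable against a suitable weight factors as $|W|^2$ on $\R$ with $W$ of exponential type $\sigma$ (a Fejér–Riesz / Krein-type factorization), and then to check the de Branges membership $W \in \H(E)$ using the bounded-type bookkeeping that properties (P1)–(P3) make available. For part (i), write $M = W W^*$ with $W$ real-on-$\R$-up-to-unimodular-constant of exponential type at most $\tau(E)$; the point to verify is that $W \in \H(E)$, i.e. (a) $\|W\|_E^2 = \int |W(x)|^2|E(x)|^{-2}\,\dx = \int M(x)|E(x)|^{-2}\,\dx < \infty$, which is the hypothesis, and (b) $W/E$ and $W^*/E$ have bounded type and nonpositive mean type in $\U$. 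For (b): $E$ has bounded type in $\U$ by (P1), hence so does $1/E$ (it has no zeros in $\U$ since $E$ is Hermite–Biehler), and $W$ has exponential type, so $W/E$ is a quotient of functions of bounded type, hence of bounded type; the mean type of $W/E$ is $v(W) - v(E) = \tau(W) - \tau(E) \le 0$ by Krein's theorem applied to $W$ (exponential type $= $ mean type for Cartwright-class functions, which $W$ belongs to since $W \in L^2$ of a weight comparable to Lebesgue near infinity — here one uses $|E|^{-2}$ integrability of $M$ together with the growth of $E$). The symmetric argument with $W^*$ gives the same; property (P3), $E^*(z) = E(-z)$, is what keeps the two halves symmetric and lets the type/mean-type comparison go through for $W^*$ as well. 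I would cite \cite[Lemma 10]{HV} or the analogous statement in \cite[Section 3]{CL3} for the precise form of this factorization-plus-membership lemma rather than reprove it.

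For part (ii): by Lemma \ref{L-thm}(ii) the function $M(B^2,\lambda,z)$ has exponential type at most $\tau(B^2) = 2\tau(B) \le 2\tau(E)$, is real entire, and satisfies $B(x)^2\{M(B^2,\lambda,x) - e^{-\pi\lambda x^2}\} \ge 0$ on $\R$, i.e. $M(B^2,\lambda,x) \ge e^{-\pi\lambda x^2} > 0$ for all $x \in \R$; by Lemma \ref{L1-element} the difference $M(B^2,\lambda,\cdot) - e^{-\pi\lambda x^2}$ lies in $L^1(\R,|E(x)|^{-2}\,\dx)$, and combined with \eqref{Sec3_cond_Gaussian_E} this shows $M(B^2,\lambda,\cdot) \in L^1(\R,|E(x)|^{-2}\,\dx)$. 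So part (i) applies and yields $U \in \H(E)$ with $M(B^2,\lambda,z) = U(z)U^*(z)$, which is \eqref{Sec3_eqM_U_U_star}. For \eqref{Sec3_eqL_U_U_star}, set
$$
D(z) := M(B^2,\lambda,z) - L(A^2,\lambda,z).
$$
This $D$ is real entire of exponential type at most $2\tau(E)$; on $\R$ we have $L(A^2,\lambda,x) \le e^{-\pi\lambda x^2} \le M(B^2,\lambda,x)$ by Lemma \ref{L-thm}(i)–(ii), so $D \ge 0$ on $\R$; and $D = (M(B^2,\lambda,\cdot) - e^{-\pi\lambda x^2}) + (e^{-\pi\lambda x^2} - L(A^2,\lambda,\cdot))$ is a sum of two functions in $L^1(\R,|E(x)|^{-2}\,\dx)$ by Lemma \ref{L1-element}, hence lies in that space. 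Applying part (i) to $D$ produces $V \in \H(E)$ with $D(z) = V(z)V^*(z)$, and then $L(A^2,\lambda,z) = M(B^2,\lambda,z) - D(z) = U(z)U^*(z) - V(z)V^*(z)$, which is \eqref{Sec3_eqL_U_U_star}.

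The main obstacle is the bounded-type / mean-type verification in part (i): one must be careful that the factor $W$ extracted from the Fejér–Riesz factorization genuinely has exponential type exactly half that of $M$ and genuinely satisfies $v(W/E) \le 0$ and $v(W^*/E) \le 0$ rather than merely $v \le \tau(E)$. The clean way is to invoke Krein's theorem: since $M \ge 0$ on $\R$ and $M \in L^1$ of a measure comparable to $\dx$ at infinity (because $|E(x)|^{-2}$ decays like $e^{-2\tau(E)|x|}$ times a rational factor, while $M$ has only polynomial-times-$e^{2\tau(E)|x|}$ growth), $M$ is in the Cartwright class, its indicator diagram is the segment $[-i\tau(E), i\tau(E)]$... wait, $[-2i\tau(E), 2i\tau(E)]$ scaled — and the half $W$ inherits indicator diagram $[-i\tau(E), i\tau(E)]$, giving $v(W) = v(W^*) = \tau(E)$ exactly and hence $v(W/E) = v(W^*/E) = 0$. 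I would phrase this by quoting \cite[Lemma 10]{HV} (or \cite[Lemma 14]{CL3}), whose hypotheses match \eqref{Sec3_Lem10_1st_claim} verbatim, so that the delicate analysis is imported rather than repeated.
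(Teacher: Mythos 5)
Your proposal matches the paper's proof essentially verbatim: part (i) is cited from \cite[Lemma 14]{CL3}, and part (ii) follows by applying part (i) first to $z\mapsto M(B^2,\lambda,z)$ (using Lemma \ref{L-thm} for the type and sign and Lemma \ref{L1-element} together with \eqref{Sec3_cond_Gaussian_E} for integrability) and then to $z\mapsto M(B^2,\lambda,z)-L(A^2,\lambda,z)$. One small caution about your side remarks on the obstacle in (i): it is not true that $|E(x)|^{-2}$ decays like $e^{-2\tau(E)|x|}$ in general --- for the homogeneous spaces $E_\nu$ one has $|E_\nu(x)|^{-2}\asymp|x|^{2\nu+1}$, which is polynomial growth, not exponential decay --- so the Cartwright/indicator-diagram heuristic as stated does not go through; this is harmless here since you correctly defer to the quoted lemma, but the correct mechanism is the bounded-type/mean-type bookkeeping of \cite[Lemma 14]{CL3} rather than pointwise decay of $|E|^{-2}$.
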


\begin{proof} Part (i) is \cite[Lemma 14]{CL3}. To prove part (ii), note by Lemma \ref{L-thm} that $z \mapsto L(A^2,\lambda,z)$ and $z \mapsto M(B^2,\lambda,z)$ have exponential type at most $2\tau(E)$. From Lemma \ref{L1-element} and \eqref{Sec3_cond_Gaussian_E}, it follows that $x \mapsto M(B^2, \lambda, x)$ is an element of $L^1(\R,|E(x)|^{-2} \dx)$ and this gives the representation \eqref{Sec3_eqM_U_U_star} as a particular case  of \eqref{Sec3_Lem10_1st_claim}. Representation \eqref{Sec3_eqL_U_U_star} follows similarly, considering the real entire function $z \mapsto M(B^2,\lambda,z) - L(A^2,\lambda,z)$, which is nonnegative on $\R$.
\end{proof}

\begin{proof}[Proof of Theorem \ref{Thm1}]  Let $M:\C \to \C$ be an entire function of exponential type at most $2\tau(E)$ such that $M(x) \ge e^{-\pi\lambda x^2}$ for all $x \in \R$ and
$$\int_{-\infty}^\infty M(x) \,|E(x)|^{-2} \,\dx <\infty. $$
Lemma \ref{UU*-decomp} implies that $M = WW^*$ for some $W\in \mc{H}(E)$. Since $B\notin \mc{H}(E)$ by hypothesis (P4), we are in position to use \cite[Theorem 22]{B} to conclude that $\{z \mapsto K(\xi, z);\ B(\xi) = 0\}$ is a complete orthogonal set in $\H(E)$. By Plancherel's identity we have
\begin{align}\label{M-extremality}
\begin{split}
\int_{-\infty}^\infty M(x) \,|E(x)|^{-2}\, \dx &= \int_{-\infty}^\infty \left| \frac{W(x)}{E(x)}\right|^2 \dx = \sum_{B(\xi) =0} \frac{|W(\xi)|^2}{K(\xi,\xi)} \ge  \sum_{B(\xi) =0} \frac{e^{-\pi\lambda \xi^2}}{K(\xi,\xi)},
\end{split}
\end{align}
which proves \eqref{Intro-M-integral}. From Lemma \ref{L-thm} we know that $z\mapsto M(B^2,\lambda,z)$ is an entire function of exponential type at most $2\tau(E)$ that satisfies
$$M(B^2,\lambda,x) \geq e^{-\pi\lambda x^2}$$
for all $x \in \R$. Moreover, it follows from  \eqref{Sec2_Lem9_id_M} and \eqref{Sec3_eqM_U_U_star} that we have equality in \eqref{M-extremality} for $M(z) = M(B^2,\lambda,z)$. 

\smallskip

Now let $L:\C \to \C$ be an entire function of exponential type at most $2\tau(E)$ such that $L(x) \le e^{-\pi\lambda x^2}$ for all $x \in \R$ and
$$\int_{-\infty}^\infty L(x) \,|E(x)|^{-2} \,\dx <\infty.$$
We use the fact that {\it there exists a majorant} $z \mapsto M(B^2,\lambda,z)$ in $L^1(\R,|E(x)|^{-2} \dx)$. This allows us to apply Lemma \ref{UU*-decomp} to the function $z \mapsto M(B^2,\lambda,z) - L(z)$ (which is nonnegative on the real axis, belongs to $L^1(\R,|E(x)|^{-2} \dx)$ and has exponential type at most $2\tau(E)$). Together with \eqref{Sec3_eqM_U_U_star}, this shows that  $L = UU^* - TT^*$ with $U,T \in \H(E)$. Since $A \notin \H(E)$ by hypothesis (P4), a new application of \cite[Theorem 22]{B} shows that $\{z \mapsto K(\xi, z);\ A(\xi) = 0\}$ is also a complete orthogonal set in $\H(E)$. Plancherel's identity then gives us
\begin{align}\label{L-extremality}
\begin{split}
\int_{-\infty}^\infty L(x) \,|E(x)|^{-2}\, \dx &= \int_{-\infty}^\infty  \frac{|U(x)|^2 - |T(x)|^2}{|E(x)|^2}\, \dx = \sum_{A(\xi) =0} \frac{|U(\xi)|^2 - |T(\xi)|^2}{K(\xi,\xi)} \le  \sum_{A(\xi) =0} \frac{e^{-\pi\lambda \xi^2}}{K(\xi,\xi)},
\end{split}
\end{align}
which proves \eqref{Intro-L-integral}. From Lemma \ref{L-thm} we know that $z \mapsto L(A^2,\lambda,z)$ is an entire function of exponential type at most $2\tau(E)$ that satisfies
$$L(A^2,\lambda,x) \leq e^{-\pi\lambda x^2}$$
for all $x \in \R$, and it follows from \eqref{Sec2_Lem9_id_L} and \eqref{Sec3_eqL_U_U_star} that we have equality in \eqref{L-extremality} for $L(z) = L(A^2,\lambda,z)$.
\end{proof}

\section{Proof of Theorem \ref{Thm2}}

\subsection{Preliminaries} 

\subsubsection{Homogeneous spaces} We start by recalling the basic properties of the spaces $\H(E_{\nu})$, which are examples of homogeneous de Branges spaces \cite[Section 50]{B}. Further details are given in \cite[Section 5]{HV} and \cite[Section 4]{CL3}.

\smallskip

Recall that $\nu >-1$ is a given parameter and 
$$E_{\nu}(z) := A_{\nu}(z) - iB_{\nu}(z)\,,$$
with $A_{\nu}$ and $B_{\nu}$ given by \eqref{Intro_A_nu} and \eqref{Intro_B_nu}, respectively. From \cite[Problems 227 and 228]{B} we have that $E_{\nu}$ is in fact a Hermite-Biehler function with no zeros on the real line. Moreover, $E_{\nu}$ has bounded type in $\U$ with $v(E_{\nu}) = \tau(E_{\nu}) = 1$. By definition, $A_{\nu}$ is even while $B_{\nu}$ is odd. This plainly implies that $z \mapsto E_{\nu}(iz)$ is real entire. This accounts for properties (P1) - (P3) for $E_{\nu}$. The next lemma collects the relevant properties of the spaces $\H(E_{\nu})$ for our purposes.

\begin{lemma}\label{Sec5_rel_facts}
Let $\nu > -1$. The following properties hold:
\begin{enumerate}
\item[(i)] There exist positive constants $a_\nu,b_\nu$ such that 
\begin{align}\label{Lem17_i}
a_\nu |x|^{2\nu+1} \le |E_{\nu}(x)|^{-2} \le b_\nu |x|^{2\nu+1}
\end{align}
for all $x \in \R$ with $|x|\geq1$.
\smallskip

\item[(ii)] For $F\in\H(E_\nu)$ we have the identity 
\begin{align} \label{Lem17_ii}
\int_{-\infty}^\infty |F(x)|^{2}\,|E_{\nu}(x)|^{-2}\, \dx = c_\nu \int_{-\infty}^\infty |F(x)|^2 \,|x|^{2\nu+1} \,\dx\,,
\end{align}
with $c_\nu = \pi \,2^{-2\nu-1}\, \Gamma(\nu+1)^{-2}$.

\smallskip

\item[(iii)] An entire function $F$ belongs to $\H(E_\nu)$ if and only if $F$ has exponential type at most $1$ and
\begin{equation}\label{Sec4_eq0_int_cond}
\int_{-\infty}^\infty |F(x)|^2 \,|x|^{2\nu+1}\, \dx <\infty.
\end{equation}
\end{enumerate}
\end{lemma}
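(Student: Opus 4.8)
The plan is to derive all three statements from the classical connection between the functions $A_\nu, B_\nu$ and Bessel functions, as recorded in \eqref{Intro_Bessel1} and \eqref{Intro_Bessel2}, together with the standard asymptotics \eqref{Asymptotic_Bessel_functions}. For part (i), I would start from the definition $|E_\nu(x)|^2 = A_\nu(x)^2 + B_\nu(x)^2$ (valid on $\R$ since $A_\nu, B_\nu$ are real entire), and use the Bessel identities to write
$$|E_\nu(x)|^2 = \Gamma(\nu+1)^2 \left(\tfrac{|x|}{2}\right)^{-2\nu}\big\{J_\nu(x)^2 + J_{\nu+1}(x)^2\big\}.$$
The asymptotic \eqref{Asymptotic_Bessel_functions}, applied to both $J_\nu$ and $J_{\nu+1}$ (whose cosine arguments differ by $\pi/2$, so the squares add up to a quantity bounded above and below by positive constants for large $|x|$), shows that $J_\nu(x)^2 + J_{\nu+1}(x)^2 \asymp_\nu |x|^{-1}$ as $|x|\to\infty$. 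Hence $|E_\nu(x)|^2 \asymp_\nu |x|^{-2\nu-1}$ for $|x|$ large, which on taking reciprocals gives \eqref{Lem17_i}; the threshold $|x|\geq 1$ is then a harmless normalization since $E_\nu$ has no real zeros and $1/|E_\nu|^2$ is continuous and positive on the compact set $[-1,1]$. The one subtlety here is to make the lower bound on $J_\nu(x)^2 + J_{\nu+1}(x)^2$ rigorous despite each summand vanishing infinitely often: one uses that the error terms in \eqref{Asymptotic_Bessel_functions} are $O_\nu(|x|^{-3/2})$, so the main-term identity $\cos^2\theta + \sin^2\theta = 1$ wins for $|x|$ beyond some $\nu$-dependent constant.

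For part (ii), the identity \eqref{Lem17_ii} is a homogeneous-space phenomenon: it should follow from the fact that $|E_\nu(x)|^{-2}$ and $c_\nu |x|^{2\nu+1}$ differ by a weight that is ``invisible'' to functions in $\H(E_\nu)$. Concretely, I would use the known structure of homogeneous de Branges spaces — $\H(E_\nu)$ coincides, as a set with equivalent norm, with the space of entire functions of exponential type at most $1$ that are square-integrable against $|x|^{2\nu+1}\,\dx$ — and then identify the constant $c_\nu$ by testing on the reproducing kernel at the origin, or by a direct computation using the Bessel asymptotics above, which give $|E_\nu(x)|^{-2} \sim c_\nu |x|^{2\nu+1}$ with exactly the stated $c_\nu = \pi\,2^{-2\nu-1}\,\Gamma(\nu+1)^{-2}$ (note $2/\pi$ from \eqref{Intro_asymp2} reconciles with $\Gamma(\nu+1)^2 2^{2\nu}$ from the Bessel identities). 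I would quote \cite[Section 50]{B}, \cite[Section 5]{HV}, and \cite[Section 4]{CL3} for the structural facts, since the excerpt explicitly defers to those references. Part (iii) is then essentially a restatement: $F\in\H(E_\nu)$ requires $F$ to be entire with $F/E_\nu$ and $F^*/E_\nu$ of bounded type and nonpositive mean type in $\U$, which by Krein's theorem (invoked earlier for $E_\nu$) translates to $F$ having exponential type at most $\tau(E_\nu)=1$; combined with \eqref{Lem17_ii} the finiteness of $\|F\|_{E_\nu}$ is equivalent to \eqref{Sec4_eq0_int_cond}. The converse inclusion — that exponential type $\leq 1$ plus \eqref{Sec4_eq0_int_cond} forces membership — is the part genuinely requiring the homogeneous-space theory, and I would again cite \cite[Section 5]{HV}.

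The main obstacle is part (iii)'s converse direction and the precise determination of the constant $c_\nu$ in part (ii): both require more than the elementary Bessel asymptotics and rest on the deeper structure theory of homogeneous de Branges spaces (the Paley–Wiener-type description and the explicit reproducing kernel $K_\nu$). Since the excerpt positions this lemma as a recollection of known facts with pointers to \cite{B, HV, CL3}, I expect the write-up to be short, citing those sources for the structural input and supplying only the Bessel-asymptotic computation for part (i) and the constant-matching for part (ii) in detail.
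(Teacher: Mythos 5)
Your proposal is correct in substance and matches the paper's approach: the paper disposes of this lemma with a one-line reference to \cite[Section 5]{HV}, and you correctly anticipate that the content is a recollection of the Holt--Vaaler structure theory for homogeneous de Branges spaces, with your Bessel-asymptotics computation and the constant-matching for $c_\nu$ being exactly the kind of argument that underlies it. One small correction: the threshold $|x|\geq 1$ is not merely a harmless normalization but is genuinely necessary when $\nu>-\tfrac12$, since then $|x|^{2\nu+1}\to 0$ as $x\to 0$ while $|E_\nu(0)|^{-2}>0$, so the lower bound in \eqref{Lem17_i} fails near the origin; and to cover the range $1\leq |x|\leq R$ (between the fixed threshold and where the asymptotic kicks in) you should invoke continuity and positivity of $|E_\nu(x)|^{-2}/|x|^{2\nu+1}$ on the compact set $\{1\leq |x|\leq R\}$, not on $[-1,1]$.
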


\begin{proof} We refer the reader to \cite[Section 5]{HV}.
\end{proof}

In particular, the identity \eqref{Lem17_ii} is fundamental for our purposes, and provides a good example of the power (and a bit of mystery) of the de Branges space machinery. From \eqref{Intro_Bessel1}, \eqref{Intro_Bessel2} and \eqref{Asymptotic_Bessel_functions} we see that $A_{\nu}$ and $B_{\nu}$ do not verify \eqref{Sec4_eq0_int_cond} and thus $A_{\nu}, B_{\nu} \notin \H(E_{\nu})$. This establishes the remaining property (P4).

\subsubsection{Symmetrization techniques} Before we move on to the proof of Theorem \ref{Thm2} we also to need to recall the basic symmetrization mechanisms to deal with radial functions in several variables. This is contained in \cite[Section 6]{HV} and we again replicate here a brief collection of the main facts for the reader's convenience.

\smallskip

We start with our extension operator. If $F:\C\to \C$ is an even entire function with power series representation
\begin{align*}
F(z) = \sum_{k=0}^\infty c_k z^{2k}\,,
\end{align*}
we define the extension $\psi_N(F):\C^N \to \C$ by
\begin{align*}
\psi_N(F)({\bf z}) = \sum_{k=0}^\infty c_k (z_1^2 + \ldots + z_N^2)^k.
\end{align*}
We also consider a suitable symmetrization operator. For an entire function $\F: \C^N \to \C$, with $N >1$, we define its radial symmetrization $\widetilde{\F}:\C^N \to \C$ by 
\begin{equation*}
\widetilde{\F}(\z) = \int_{SO(N)}\F(R\z)\,\d \sigma(R),
\end{equation*}
where $SO(N)$ denotes the compact topological group of real orthogonal $N \times N$ matrices with determinant $1$, with associated Haar measure $\sigma$ normalized so that $\sigma(SO(N)) = 1$. If $N=1$ we set $\widetilde{\F}(z) = \tfrac12\{\F(z) + \F(-z)\}$.

\begin{lemma}\label{type-to-ntype} 
The following propositions hold:
\smallskip
\begin{itemize}
\item[(i)] Let $F: \C \to \C$ be an even entire function. Then $F$ has exponential type if and only if $\psi_N(F)$ has exponential type, and $\tau(F) = \tau(\psi_N(F))$. 

\smallskip

\item[(ii)] Let $\F: \C^N \to \C$ be an entire function. Then $\widetilde{\F}:\C^N \to \C$ is an entire function with power series expansion of the form
\begin{equation}\label{Sec4_PWRep}
\widetilde{\F}({\bf z}) = \sum_{k=0}^\infty c_k (z_1^2 + \ldots + z_N^2)^k.
\end{equation}
Moreover, if $\F$ has exponential type then $\widetilde{\F}$ has exponential type and $\tau\big(\widetilde{\F}\big) \leq \tau(\F)$.

\end{itemize}
\end{lemma}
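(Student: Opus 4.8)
\textbf{Proof proposal for Lemma \ref{type-to-ntype}.}

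The plan is to treat the two parts separately, in each case reducing the multidimensional statement to a one-dimensional fact about power series and the growth of entire functions along rays. For part (i), I would first record the elementary direction: if $\psi_N(F)$ has exponential type, then restricting to the first coordinate axis $(z,0,\dots,0)$ recovers $F$, and a restriction of an entire function of exponential type to a complex line is again of exponential type at most the original type (with respect to the relevant direction), so $\tau(F)\le\tau(\psi_N(F))$. For the reverse inequality, I would use the substitution that makes $\psi_N$ transparent: with $w=(z_1^2+\dots+z_N^2)^{1/2}$ chosen so that $F$ even means $\psi_N(F)(\z)=F(w)$ whenever $\z\in\C^N$ lies in the ``real'' locus where $z_1^2+\dots+z_N^2\ge 0$; more robustly, I would bound $|\psi_N(F)(\z)|$ by estimating the coefficients $c_k$ from $\tau(F)=\tau$ and summing. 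Concretely, $F$ of exponential type $\tau$ with even power series $\sum c_k z^{2k}$ satisfies $|c_k|\le C_\varepsilon (\tau+\varepsilon)^{2k}/(2k)!$ by the standard Cauchy-estimate characterization of exponential type, and then
\[
|\psi_N(F)(\z)| \le \sum_{k=0}^\infty |c_k|\,\bigl(|z_1|^2+\dots+|z_N|^2\bigr)^k \le C_\varepsilon \sum_{k=0}^\infty \frac{(\tau+\varepsilon)^{2k}}{(2k)!}\,|\z|^{2k} \le C_\varepsilon\, e^{(\tau+\varepsilon)|\z|},
\]
using $|z_1^2+\dots+z_N^2|\le |z_1|^2+\dots+|z_N|^2=|\z|^2$. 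Since $\|\z\|\le|\z|$ for the norm used in this paper (in fact $\|\z\|$ is the Euclidean norm of the real and imaginary parts combined in the way that still dominates), this gives $\tau(\psi_N(F))\le\tau+\varepsilon$ for every $\varepsilon>0$, hence $\tau(\psi_N(F))\le\tau(F)$, completing (i). I should be careful to match the precise definition of $\|\cdot\|$ on $\C^N$ given in the introduction and cite \cite[pp. 111--112]{SW} for the coefficient estimate in several variables if needed, but this is routine.

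For part (ii), the entirety of $\widetilde\F$ follows because the integral over the compact group $SO(N)$ of the jointly continuous (indeed entire) integrand $\F(R\z)$ is entire: one checks holomorphy in each variable by Morera/Fubini, differentiating under the integral sign being justified by compactness of $SO(N)$ and local boundedness of $\F$. The power series form \eqref{Sec4_PWRep} is the key structural claim: $\widetilde\F$ is invariant under $SO(N)$ by construction (re-parametrizing the Haar integral), and an entire function on $\C^N$ invariant under all rotations in $SO(N)$ must, by expanding in homogeneous polynomials and using that the only $SO(N)$-invariant homogeneous polynomials are powers of $z_1^2+\dots+z_N^2$ (for $N\ge 2$; for $N=1$ the even part plays this role), be of the stated form. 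I would make this precise by grouping the Taylor series $\F=\sum_m P_m$ into homogeneous pieces, noting $\widetilde{P_m}$ is an $SO(N)$-invariant homogeneous polynomial of degree $m$, which forces $m$ even and $\widetilde{P_m}=c_{m/2}(z_1^2+\dots+z_N^2)^{m/2}$. Finally, the type bound $\tau(\widetilde\F)\le\tau(\F)$ is immediate from the integral representation: for $R\in SO(N)$ orthogonal we have $\|R\z\|=\|\z\|$ (since $R$ is a real orthogonal transformation and $\|\cdot\|$ is rotation-invariant, being built from the Euclidean structure), so $|\F(R\z)|\le C_\varepsilon e^{(\tau(\F)+\varepsilon)\|R\z\|}=C_\varepsilon e^{(\tau(\F)+\varepsilon)\|\z\|}$ uniformly in $R$, and integrating over $SO(N)$ (a probability measure) preserves the bound, giving $|\widetilde\F(\z)|\le C_\varepsilon e^{(\tau(\F)+\varepsilon)\|\z\|}$.

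The main obstacle, such as it is, is purely bookkeeping: verifying that the norm $\|\cdot\|$ on $\C^N$ from the introduction (the support-function norm of the unit Euclidean ball) is genuinely invariant under $SO(N)$ and is dominated appropriately by the Euclidean norm $|\cdot|$ in the coefficient estimate of part (i) — one wants $\|\z\|\le|\z|$ to push the Cauchy-estimate bound through, and rotation-invariance $\|R\z\|=\|\z\|$ for part (ii). Both are standard consequences of the definition $\|\z\|=\sup\{|\sum z_n t_n|:\t\in\R^N,\ |\t|\le1\}$: rotation-invariance follows by the substitution $\t\mapsto R^{-1}\t$ in the supremum, and $\|\z\|\le|\z|$ follows from Cauchy--Schwarz. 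I would state these two facts as a preliminary remark and then the lemma follows cleanly. Everything else is the kind of differentiation-under-the-integral and coefficient-summation argument that can be dispatched in a few lines, so I do not anticipate any genuine difficulty; the content is entirely in the identification \eqref{Sec4_PWRep} of the invariant power series form, which I would present as the heart of part (ii).
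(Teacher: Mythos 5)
The paper does not actually give its own proof of this lemma; it simply cites \cite[Lemmas 18 and 19]{HV}, so you are supplying a proof where the paper declines to. Part (ii) of your argument is sound in outline: entirety by differentiation under the integral, $SO(N)$-invariance forcing the power series to be in $z_1^2+\dots+z_N^2$ by classical invariant theory, and the type bound from $\|R\z\|=\|\z\|$ for real orthogonal $R$, which you correctly verify from the definition $\|\z\|=\sup\{|\z\cdot\t|:\t\in\R^N,\ |\t|\le 1\}$.

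Part (i), however, has a genuine gap in the direction $\tau(\psi_N(F))\le\tau(F)$. Your chain of inequalities replaces $|z_1^2+\dots+z_N^2|$ by the larger quantity $|z_1|^2+\dots+|z_N|^2=|\z|^2$ and arrives at $|\psi_N(F)(\z)|\le C_\varepsilon e^{(\tau+\varepsilon)|\z|}$. You then invoke $\|\z\|\le|\z|$ to conclude $\tau(\psi_N(F))\le\tau+\varepsilon$, but that inequality points the wrong way: the exponential type is $\limsup_{\|\z\|\to\infty}\|\z\|^{-1}\log|\psi_N(F)(\z)|$, and from $\log|\psi_N(F)(\z)|\le(\tau+\varepsilon)|\z|+O(1)$ you only get a bound of $(\tau+\varepsilon)\sup(|\z|/\|\z\|)$. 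Since writing $\z=\x+i\y$ with $\x,\y\in\R^N$ one has $\|\z\|\ge\max(|\x|,|\y|)\ge|\z|/\sqrt{2}$, with equality attainable (e.g.\ $N=2$, $\z=(1,i)$, where $\|\z\|=1$ but $|\z|=\sqrt2$ and $z_1^2+z_2^2=0$), your estimate proves only $\tau(\psi_N(F))\le\sqrt2\,\tau(F)$. Your parenthetical remark that ``$\|\z\|$ is the Euclidean norm of the real and imaginary parts combined in the way that still dominates'' is not a correct description of the norm and does not repair this.

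The fix is to estimate $|z_1^2+\dots+z_N^2|$ directly by $\|\z\|^2$ rather than by $|\z|^2$. Indeed, since $\sum_n z_n t_n=\x\cdot\t+i\,\y\cdot\t$, maximizing $(\x\cdot\t)^2+(\y\cdot\t)^2$ over $|\t|\le1$ gives the largest eigenvalue of $\x\x^T+\y\y^T$, which evaluates to
\[
\|\z\|^2=\tfrac12\Big(|\x|^2+|\y|^2+\sqrt{(|\x|^2-|\y|^2)^2+4(\x\cdot\y)^2}\Big)=\tfrac12\big(|\z|^2+|z_1^2+\dots+z_N^2|\big),
\]
because $z_1^2+\dots+z_N^2=|\x|^2-|\y|^2+2i\,\x\cdot\y$. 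Since $|\z|^2\ge|z_1^2+\dots+z_N^2|$, this identity yields the sharp inequality $|z_1^2+\dots+z_N^2|\le\|\z\|^2$. Plugging this in, your coefficient estimate gives $|\psi_N(F)(\z)|\le\sum_k|c_k|\,\|\z\|^{2k}\le C_\varepsilon e^{(\tau+\varepsilon)\|\z\|}$, which does yield $\tau(\psi_N(F))\le\tau(F)$. With that replacement your argument for part (i) goes through; the Cauchy-estimate bound $|c_k|\le C_\varepsilon(\tau+\varepsilon)^{2k}/(2k)!$ (after absorbing polynomial factors in $k$ into $\varepsilon$) and the easy reverse inequality by restriction to a coordinate axis are both fine.
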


\begin{proof} These are \cite[Lemmas 18 and 19]{HV}.
\end{proof}

\subsection{Proof of Theorem \ref{Thm2}} Recall that we write $\F_{\lambda}(\x) = e^{-\pi \lambda |\x|^2}$. 

\subsubsection{Proof of part {\rm (i)}} For $\kappa >0$ observe that $\L \in \E_{\delta}^{N-} (\mc{F}_{\lambda})$ if and only if $\x \mapsto \L(\kappa \x) \in \E_{\kappa\delta}^{N-} (\mc{F}_{\kappa^2\lambda})$. An analogous property holds for the majorants. This is enough to conclude part (i).

\subsubsection{Proof of part {\rm (ii)}} Let $L \in \E_{\delta}^{1-} (\mc{F}_{\lambda})$ be such that 
$$\int_{-\infty}^\infty \left\{ e^{-\pi\lambda x^2} - L(x)\right\} \,|x|^{2\nu+1} \,\dx < \infty.$$
By replacing $L$ by $\widetilde{L}$, we may assume that $L$ is even, and hence has a power series expansion
\[
L(z) = \sum_{k=0}^\infty c_{k} z^{2k}.
\]
By Lemma \ref{type-to-ntype} (i) we have that $\psi_{N}(L) \in \E_{\delta}^{N-} (\mc{F}_{\lambda})$, and a change to radial variables gives us
\begin{align*}
\int_{\R^N} \Big\{e^{-\pi \lambda |\x|^2}  - \psi_N(L)(\x)\Big\}\, |\x|^{2\nu+2 - N}\,\dxx  = \tfrac12\, \omega_{N-1} \int_{-\infty}^\infty \Big\{ e^{-\pi \lambda x^2} - L(x)\Big\} \,|x|^{2\nu+1} \,\dx.
\end{align*}
Hence
\begin{equation*}\label{Sec5_eq3_pI}
U_{\nu}^{N-}(\delta, \lambda) \leq  \tfrac12\, \omega_{N-1} \,U_{\nu}^{1-}(\delta, \lambda).
\end{equation*}

\smallskip

On the other hand, let $\L \in \E_{\delta}^{N-} (\mc{F}_{\lambda})$ be such that 
\begin{align*}
\int_{\R^N} \Big\{e^{-\pi \lambda|\x|^2} - \L(\x)\Big\}\, |\x|^{2\nu+2 - N}\,\dxx < \infty.
\end{align*}
By Lemma \ref{type-to-ntype} (ii) we have that $\widetilde{\L} \in \E_{\delta}^{N-} (\mc{F}_{\lambda})$ and that it has a power series expansion 
\begin{equation*}
\widetilde{\L}({\bf z}) = \sum_{k=0}^\infty c_k (z_1^2 + \ldots + z_n^2)^k.
\end{equation*}
Define the entire function $L:\C \to \C$ by
\begin{equation*}
L(z) = \widetilde{\L}(z,0,0, \ldots, 0) = \sum_{k=0}^\infty c_k z^{2k}.
\end{equation*}
Another application of Lemma \ref{type-to-ntype} (i) gives us that $L \in \E_{\delta}^{1-} (\F_{\lambda})$ and we arrive at

\begin{align*}
\tfrac12\, \omega_{N-1} \int_{-\infty}^\infty  \Big\{ e^{-\pi \lambda x^2}  - L(x)\Big\} \,|x|^{2\nu+1} \,\dx
 &=  \int_{\R^N} \Big\{e^{-\pi \lambda|\x|^2} - \widetilde{\L}(\x)\Big\}\, |\x|^{2\nu+2 - N}\,\dxx\\
 & =  \int_{\R^N} \Big\{e^{-\pi \lambda|\x|^2} - \L(\x)\Big\}\, |\x|^{2\nu+2 - N}\,\dxx.
\end{align*}
Hence
\begin{equation*}\label{Sec5_eq3_pII}
 \tfrac12\, \omega_{N-1} \,U_{\nu}^{1-}(\delta, \lambda) \leq U_{\nu}^{N-}(\delta, \lambda),
\end{equation*}
and this concludes the proof for the minorant case. The majorant case is treated analogously.

\subsubsection{Proof of part {\rm (iii)}} We are now interested in computing $U_{\nu}^{1\pm}(2, \lambda)$. Let us start with the majorant case. First observe that 
\begin{equation*}
\int_{-\infty}^{\infty} e^{-\pi \lambda x^2}\,|x|^{2\nu +1} \,\dx = \frac{\Gamma(\nu +1)}{(\pi \lambda)^{\nu +1}}.
\end{equation*}
Let $M:\C \to \C$ be an entire function of exponential type at most $2$ such that $M(x) \geq e^{-\pi \lambda x^2}$ for all $x \in \R$ and 
\begin{equation*}
\int_{-\infty}^{\infty}M(x)\,|x|^{2\nu +1}\,\dx <\infty.
\end{equation*}
From \eqref{Lem17_i} we obtain
\begin{equation*}
\int_{-\infty}^{\infty}M(x)\,|E_{\nu}(x)|^{-2}\,\dx <\infty.
\end{equation*}
It follows from Lemma \ref{UU*-decomp} that $M = WW^*$ for some $W \in \mc{H}(E_{\nu})$. From Theorem \ref{Thm1} and the key identity \eqref{Lem17_ii} we have
\begin{align}\label{Sec5_eq3_majorant_B}
\begin{split}
\sum_{B_{\nu}(\xi)=0} \frac{e^{-\pi \lambda\xi^2}}{K_{\nu}(\xi,\xi)} & \leq \int_{-\infty}^{\infty} M(x) \,|E_{\nu}(x)|^{-2}\,\dx\\
& =  \int_{-\infty}^{\infty} |W(x)|^2 \,|E_{\nu}(x)|^{-2}\,\dx\\
& =  c_{\nu}\int_{-\infty}^{\infty} |W(x)|^2 \,|x|^{2\nu +1}\,\dx\\
& =  c_{\nu}\int_{-\infty}^{\infty}M(x)\,|x|^{2\nu +1}\,\dx.
\end{split}
\end{align}
Moreover, we have seen that the entire function $z \mapsto M(B_{\nu}^2, \lambda, z)$ of exponential type at most $2$ realizes the equality in \eqref{Sec5_eq3_majorant_B}. Therefore
\begin{equation*}
U_{\nu}^{1+}(2, \lambda) = \sum_{B_{\nu}(\xi)=0} \frac{e^{-\pi \lambda\xi^2}}{c_{\nu}K_{\nu}(\xi,\xi)} - \frac{\Gamma(\nu +1)}{(\pi \lambda)^{\nu +1}}.
\end{equation*} 
The minorant case follows analogously, using the fact that {\it there exists a majorant} $z \mapsto M(B_{\nu}^2, \lambda, z)$. This allows us to write a general minorant $L$ as $UU^* - TT^*$, as done in the proof of Theorem \ref{Thm1}.

\subsubsection{Proof of part {\rm (iv)}} For $N \geq 1$ and $\delta = 2$ we define $\L_{\nu,N}(2, \lambda, \z) := \psi_N \big(L(A_{\nu}^2, \lambda, \cdot)\big)(\z)$  and $\M_{\nu,N}(2, \lambda, \z) := \psi_N \big(M(B_{\nu}^2, \lambda, \cdot)\big)(\z)$. These functions have exponential type at most $2$ and satisfy
$$\L_{\nu,N}(2, \lambda, \x)  \leq e^{-\pi \lambda|\x|^2} \leq \M_{\nu,N}(2, \lambda, \x)$$
for all $\x \in \R^N$. Moreover,
\begin{align*}
\int_{\R^N} \Big\{ \M_{\nu,N}(2, \lambda, \x) - e^{-\pi \lambda|\x|^2}\Big\}\,|\x|^{2\nu +2 - N}\,\dxx & = \tfrac12\, \omega_{N-1}\int_{-\infty}^{\infty} \Big\{ M(B_{\nu}^2,\lambda, x) - e^{-\pi \lambda x^2}\Big\}\,|x|^{2\nu +1}\,\dx\\[0.5em]
& =  \tfrac12\, \omega_{N-1} \,U_{\nu}^{1+}(2,\lambda) \\[0.5em]
& = U_{\nu}^{N+}(2,\lambda).
\end{align*}
The computation for the minorant is analogous. The existence of extremal functions for general $\delta >0$ follows by the change of variables given in part (i). This completes the proof.

\section{Proof of Theorem \ref{Thm3} - Gaussian subordination}\label{Sec_Gaussian_Sub}

\subsection{Preliminaries} We start by showing that the asymptotic estimate \eqref{Intro_open_asymp} holds. We have established the integral representation
\[
U_\nu^{1+} (2,\lambda) = -\int_{-\infty}^\infty  \left\{B_\nu^2(x) \int_0^{\pi \lambda} g(w-\pi \lambda) \,e^{-x^2 w}\, \dw\right\} \,|x|^{2\nu +1}\,\dx\,,
\]
where $g = g_{0-}$ for the Laguerre-P\'{o}lya function $F_{B_\nu^2}$ as in \eqref{gc-def} and \eqref{Sec2_Def_F_A}. We note that $B_\nu^2/|E_{\nu}|^2$ is bounded by $1$ on the real line, and therefore, using \eqref{Lem17_i}, we see that $B_\nu^2(x)\,|x|^{2\nu +1}$ is also bounded on the real line. Moreover, $g(t) =0$ for $t\ge 0$ and $g\in C^\infty(\R)$. This implies, in particular, that $g^{(k)}(0) =0$ for all $k\in\N$, which leads to $|g(w-\pi \lambda)|\le c_k(\pi \lambda-w)^k$ for $\lambda\le 1$, say, and all $0\le w\le \pi \lambda$. An application of Fubini's theorem gives \eqref{Intro_open_asymp} for the majorant. The minorant case follows analogously using $A_\nu$.

\smallskip

If the nonnegative Borel measure $\mu$ on $(0,\infty)$ satisfies \eqref{Intro_mu_1} we define
\begin{equation}\label{Sec6_eq1_int1}
D_{\nu,N}^-(\delta, \mu,{\bf x}) = \int_0^{\infty} \Big\{e^{-\pi \lambda |{\bf x}|^2} - \mc{L}_{\nu,N}(\delta,\lambda, \x)\Big\} \, \dmu,
\end{equation}
and if $\mu$ satisfies \eqref{Intro_mu_2}  we define
\begin{equation}\label{Sec6_eq1_int2}
D_{\nu,N}^+(\delta, \mu,{\bf x}) = \int_0^{\infty} \Big\{\mc{M}_{\nu,N}(\delta, \lambda, \x) - e^{-\pi \lambda |{\bf x}|^2}\Big\} \, \dmu,
\end{equation}
where $\z \mapsto  \mc{L}_{\nu,N}(\delta, \lambda, \z)$ and $\z \mapsto  \mc{M}_{\nu,N}(\delta, \lambda, \z)$ are the extremal functions of exponential type at most $\delta$ defined in Theorem \ref{Thm2} (iv). Integration in $\x$ with respect to $|\x|^{2\nu + 2 - N}\d\x$ and an application of Tonelli's theorem implies that the  nonnegative functions $\x \mapsto D_{\nu,N}^{\pm}(\delta, \mu,{\bf x})$ are radial and belong to $L^1(\R^N, |\x|^{2\nu +2 - N}\,\dxx)$, hence the integrals \eqref{Sec6_eq1_int1} and \eqref{Sec6_eq1_int2} converge almost everywhere.

\begin{lemma}\label{Sec6_Lem23}
The following properties hold:
\smallskip
\begin{enumerate}
\item[(i)] If $\mu$ satisfies \eqref{Intro_mu_1}, then the nonnegative function $\x \mapsto D_{\nu,N}^{-}(\delta, \mu,{\bf x})$ is continuous for $\x \neq 0$. Moreover, $D_{\nu}^{-}(\delta, \mu,{\bf x}) =0 $ whenever $A_{\nu}(\delta |\x|/2)=0$.
\smallskip
\item[(ii)] If $\mu$ satisfies \eqref{Intro_mu_2}, then the nonnegative function $\x \mapsto D_{\nu,N}^{+}(\delta, \mu,{\bf x})$ is continuous. Moreover, $D_{\nu}^{+}(\delta, \mu,{\bf x}) =0 $ whenever $B_{\nu}(\delta|\x|/2)=0$.
\end{enumerate}
\end{lemma}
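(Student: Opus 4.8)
The strategy is to establish continuity of $\x \mapsto D_{\nu,N}^{\pm}(\delta,\mu,\x)$ by dominated convergence, using the asymptotic estimate \eqref{Intro_open_asymp} near $\lambda = 0$ together with the crude large-$\lambda$ bounds on $U_\nu^{1\pm}(2,\lambda)$ derived in the introduction. First I would reduce to the normalized case $\delta = 2$ via the scaling in Theorem \ref{Thm2} (i), so that $\mc{L}_{\nu,N}(2,\lambda,\x) = \psi_N\big(L(A_\nu^2,\lambda,\cdot)\big)(\x)$ and $\mc{M}_{\nu,N}(2,\lambda,\x) = \psi_N\big(M(B_\nu^2,\lambda,\cdot)\big)(\x)$ as in the proof of part (iv). The nonnegativity of the integrands in \eqref{Sec6_eq1_int1} and \eqref{Sec6_eq1_int2} is immediate from Lemma \ref{L-thm} (since $A_\nu(\x) > 0$ near the origin and $\psi_N$ preserves the sign structure on $\R^N$), so only continuity and the vanishing at Bessel zeros need arguments.

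\textbf{Pointwise continuity of the integrand.} For each fixed $\x \in \R^N$, the map $\lambda \mapsto e^{-\pi\lambda|\x|^2} - \mc{L}_{\nu,N}(2,\lambda,\x)$ is continuous: indeed, from the representation \eqref{A-rep} the quantity $e^{-\pi\lambda t} - \A(F_{A_\nu},\pi\lambda,t)$ equals $F_{A_\nu}(t)\int_0^{\pi\lambda} g(w-\pi\lambda)e^{-tw}\,\dw$, which depends continuously (even smoothly) on $\lambda$, and likewise for the majorant. Also $\x \mapsto \mc{L}_{\nu,N}(2,\lambda,\x)$ and $\x \mapsto \mc{M}_{\nu,N}(2,\lambda,\x)$ are continuous (entire). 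So the integrands in \eqref{Sec6_eq1_int1} and \eqref{Sec6_eq1_int2} are jointly continuous in $(\lambda,\x)$, and it remains to produce a $\mu$-integrable dominating function, locally uniformly in $\x$.

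\textbf{The dominating function — the main obstacle.} Fix a compact set $K$ (for the minorant case, $K \subset \R^N \setminus \{0\}$; for the majorant case, $K \subset \R^N$ including the origin). I need a bound of the form $0 \le e^{-\pi\lambda|\x|^2} - \mc{L}_{\nu,N}(2,\lambda,\x) \le C_K\, \phi(\lambda)$ for $\x \in K$, where $\int_0^\infty \phi\,\dmu < \infty$ under hypothesis \eqref{Intro_mu_1}, and similarly for the majorant under \eqref{Intro_mu_2}. For small $\lambda$, one uses that $g = g_{0-}$ for $F_{A_\nu^2}$ (resp. $F_{B_\nu^2}$) is $C^\infty$ and vanishes with all derivatives at $0$, so $|g(w - \pi\lambda)| \le c_k(\pi\lambda - w)^k$ for $0 \le w \le \pi\lambda \le 1$; plugging into \eqref{A-rep} and using that $A_\nu$, $B_\nu$ (hence their $\psi_N$-extensions) are bounded on $K$ yields $|e^{-\pi\lambda|\x|^2} - \mc{L}_{\nu,N}(2,\lambda,\x)| \ll_{K,k} \lambda^{k+1}$ for $\lambda \le 1$ — comfortably $\mu$-integrable near $0$ for the appropriate $k$. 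For large $\lambda$, the minorant integrand is bounded by $e^{-\pi\lambda|\x|^2} + |\mc{L}_{\nu,N}(2,\lambda,\x)|$; here one must control $|\mc{L}_{\nu,N}(2,\lambda,\x)|$ uniformly for $\x \in K$ (bounded away from $0$) — using the representation $\mc{L}_{\nu,N}(2,\lambda,\x) = e^{-\pi\lambda|\x|^2} + \psi_N(A_\nu^2)(\x)\int_0^{\pi\lambda} g(w-\pi\lambda)e^{-|\x|^2 w}\,\dw$ and $|g| \ll 1$, the tail integral is $O(|\x|^{-2})$ uniformly, so the minorant integrand is $\ll_K 1$ for $\lambda$ large, which is $\mu$-integrable by \eqref{Intro_mu_1}. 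For the majorant, the integrand $\mc{M}_{\nu,N}(2,\lambda,\x) - e^{-\pi\lambda|\x|^2} \le \psi_N(B_\nu^2)(\x)\left|\int_0^{\pi\lambda} g(w - \pi\lambda)e^{-|\x|^2 w}\dw\right| \ll_K 1$ for $\lambda$ large, and since $\psi_N(B_\nu^2)(\x) \ll |\x|^2$ near the origin this bound extends continuously to $\x = 0$ — which is why the majorant case, governed by the stronger hypothesis \eqref{Intro_mu_2}, gives continuity on all of $\R^N$. The main delicacy is precisely matching the two regimes against the two integrability hypotheses: $\int_0^\infty \frac{\lambda^k}{1+\lambda^{\nu+k+1}}\dmu < \infty$ must absorb the $\lambda^{k+1}$ small-$\lambda$ growth and the $O(1)$ (uniformly in $\x \in K$) large-$\lambda$ bound simultaneously, so the exponent $k$ supplied by \eqref{Intro_mu_1} or \eqref{Intro_mu_2} is exactly the one entering the Taylor estimate for $g$.

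\textbf{Vanishing at Bessel zeros.} For the last assertions, suppose $A_\nu(\delta|\x|/2) = 0$, i.e. $\delta = 2$ and $A_\nu(|\x|) = 0$ after normalization. By Lemma \ref{L-thm} (i) and the identity \eqref{Sec2_Lem9_id_L}, $L(A_\nu^2,\lambda,\xi) = e^{-\pi\lambda\xi^2}$ whenever $A_\nu^2(\xi) = 0$, i.e. whenever $A_\nu(\xi) = 0$; applying $\psi_N$ and evaluating radially at $\x$ with $|\x| = \xi$ gives $\mc{L}_{\nu,N}(2,\lambda,\x) = e^{-\pi\lambda|\x|^2}$ for every $\lambda > 0$, hence the integrand in \eqref{Sec6_eq1_int1} is identically zero and $D_{\nu,N}^-(2,\mu,\x) = 0$. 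The majorant case is identical using \eqref{Sec2_Lem9_id_M}. Finally, the general $\delta > 0$ case follows from the change of variables $\x \mapsto (\delta/2)\x$ built into Theorem \ref{Thm2} (i) and the definitions \eqref{Sec6_eq1_int1}--\eqref{Sec6_eq1_int2}.
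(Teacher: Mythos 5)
Your overall strategy — pointwise continuity of the integrand in $\lambda$, a $\mu$-integrable dominating function from the vanishing of $g$ to infinite order at $0$ for small $\lambda$, plus a uniform tail bound for large $\lambda$, then the interpolation identity for the vanishing statement — is the same circle of ideas the paper uses. But there is a genuine gap in your large-$\lambda$ bound for the minorant case (i). You claim the integrand is $\ll_K 1$ for $\lambda$ large and that this is ``$\mu$-integrable by \eqref{Intro_mu_1}.'' That is false in general: for $\lambda\geq 1$ the weight in \eqref{Intro_mu_1} satisfies $\lambda^k/(1+\lambda^{\nu+k+1}) \asymp \lambda^{-\nu-1}$, which tends to $0$ for $\nu>-1$, so \eqref{Intro_mu_1} does not imply $\mu([1,\infty))<\infty$ (e.g. $\d\mu(\lambda)=\lambda^{\nu-\epsilon}\d\lambda$ satisfies \eqref{Intro_mu_1} yet has infinite total mass on $[1,\infty)$). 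Hence a constant dominating function does not close the dominated-convergence argument in case (i). Note that your claim is fine in the majorant case (ii), since $\lambda^k/(1+\lambda^k)\to 1$ and \eqref{Intro_mu_2} does force $\mu([1,\infty))<\infty$ — so only case (i) is affected.

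The gap is fixable along your lines. By Lemma \ref{g-sign-changes} (iii) applied to $F_{A_\nu^2}$ (whose smallest zero is $a_1^2>0$, so one may take $\tau\in(0,a_1^2)$), the frequency function satisfies $|g(t)|\ll e^{\tau t}$ for some $\tau>0$. Therefore, for $|\x|\geq r_0>0$ and with $\tau<\min(a_1^2,r_0^2)$,
\begin{equation*}
\int_0^{\pi\lambda} g(w-\pi\lambda)\,e^{-|\x|^2 w}\,\dw
\;\leq\; C\,e^{-\tau\pi\lambda}\int_0^{\infty} e^{(\tau-r_0^2)w}\,\dw
\;\ll_{r_0}\; e^{-\tau\pi\lambda},
\end{equation*}
which is dominated by $\lambda^{-\nu-1}$ for $\lambda\geq 1$ and hence is $\mu$-integrable under \eqref{Intro_mu_1}. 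Combined with your small-$\lambda$ bound $\ll_k \lambda^{k+1}$ this gives a valid dominating function. The paper avoids this explicit tail estimate altogether: it notes that $H_\lambda^-(x)=\int_0^{\pi\lambda}g(w-\pi\lambda)e^{-x^2w}\dw$ is nondecreasing on $(-\infty,0]$, and it uses the a.e.-finiteness of $x\mapsto\int_0^\infty H_\lambda^-(x)\dmu$ (established via Tonelli just before the lemma) to produce a local dominating function $H_\lambda^-(x^*)$ for a slightly less negative $x^*$ at which the integral is known to converge. For part (ii), your handling of continuity at the origin via the $|\x|^2$ vanishing of $\psi_N(B_\nu^2)$ is a reasonable substitute for the paper's integration-by-parts computation, and your vanishing-at-Bessel-zeros argument via \eqref{Sec2_Lem9_id_L}–\eqref{Sec2_Lem9_id_M} is correct.
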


\begin{proof} We restrict ourselves to the case $\delta =2$. The general case follows by a standard dilation argument.

\smallskip

\noindent{\it Part} (i). Recall that $A_{\nu}(0)\neq  0$. Let $g = g_{0-}$ for the Laguerre-P\'{o}lya function $F_{A_\nu^2}$ as in \eqref{gc-def} and \eqref{Sec2_Def_F_A}, and note by Lemma \ref{g-sign-changes} that $g\ge 0$ on $\R$. We have
\begin{align*}
e^{-\pi \lambda x^2} - L(A_{\nu}^2, \lambda,x) =  A_{\nu}^2(x) \int_{0}^{\pi \lambda}  g(w - \pi\lambda) \,e^{-x^2w} \,\dw.
\end{align*}
Since $g\ge 0$ on $\R$ it follows that $H_{\lambda}^{-}$ defined by
\begin{equation*}
H_{\lambda}^{-}(x) =\int_{0}^{\pi\lambda}   g(w-\pi \lambda) \,e^{-x^2w} \,\dw
\end{equation*}
is nondecreasing on $(-\infty,0]$ for every $\lambda>0$. For $x\leq0$ we define
\begin{equation*}
d_\nu^-(2, \mu,x) := A_{\nu}^2(x) \int_0^\infty H_{\lambda}^{-}(x)\, \dmu,
\end{equation*}
and note that this function is continuous on $(-\infty, 0)$ by dominated convergence (we may have \linebreak $\limsup_{x \to 0^-} d_\nu^-(2, \mu,x) = \infty$). It follows that $d_\nu^-(2, \mu,\xi)=0$ for every $\xi<0$ with $A_{\nu}(\xi)=0$. Since $D_{\nu,N}^-(2, \mu,\x) = d_\nu^-(2, \mu,-|\x|)$, our claim follows. 

\smallskip

\noindent {\it Part} (ii). Similarly, consider $g = g_{0-}$ for the Laguerre-P\'{o}lya function $F_{B_\nu^2}$ as in \eqref{gc-def} and \eqref{Sec2_Def_F_A}. Now we have $g \leq 0$ on $\R$ and 
\begin{align*}
M(A_{\nu}^2, \lambda,x) - e^{-\pi \lambda x^2} =  - B_{\nu}^2(x) \int_{0}^{\pi \lambda}  g(w - \pi\lambda) \,e^{-x^2w} \,\dw.
\end{align*}
Define 
\begin{equation*}
H_{\lambda}^{+}(x) = - \int_{0}^{\pi\lambda}   g(w-\pi \lambda) \,e^{-x^2w} \,\dw
\end{equation*}
and note that this is a nonnegative function which is nondecreasing on $(-\infty,0]$ for every $\lambda>0$. For $x\leq0$ define
\begin{equation*}
d_\nu^+(2,\mu,x) := B_{\nu}^2(x) \int_0^\infty H_{\lambda}^{+}(x)\, \dmu,
\end{equation*}
and again, we note that this function is continuous on $(-\infty, 0)$ by dominated convergence. Thus $d_\nu^+(2, \mu,\xi)=0$ for every $\xi<0$ with $B_{\nu}(\xi)=0$. 

\smallskip

To analyze the situation as $x \to 0^-$, we use integration by parts to get
\begin{equation*}
H_{\lambda}^{+}(x) = - \frac{g(-\pi \lambda)}{x^2} - \frac{1}{x^2}  \int_{0}^{\pi\lambda}   g'(w-\pi \lambda) \,e^{-x^2w} \,\dw,
\end{equation*}
and thus
\begin{equation}\label{Sec5_esto_dg}
d_\nu^+(2, \mu,x) = \frac{B_{\nu}^2(x)}{x^2} \int_0^{\infty}\left(- g( - \pi \lambda) - \int_{0}^{\pi\lambda}   g'(w-\pi \lambda) \, e^{-x^2w}\,\dw \right) \, \dmu.
\end{equation}
Observe now that
\begin{equation}\label{Sec5_est1_dg}
\int_0^{\infty} |g(-\pi \lambda)|\, \dmu < \infty
\end{equation}
and
\begin{equation}\label{Sec5_est2_dg}
\int_0^{\infty} \int_{0}^{\pi\lambda}   \big|g'(w-\pi \lambda)\big| \,\dw \,\dmu < \infty.
\end{equation}
In fact, \eqref{Sec5_est1_dg} follows from the fact that $g$ is bounded and $g^{(k)}(0) = 0$ for any $k \geq0$. To arrive at \eqref{Sec5_est2_dg}, we use the additional fact that $|g'(t)| \ll e^{\tau t}$ for some $\tau>0$ (Lemma \ref{g-sign-changes} (iii)), which holds since $g'$ is the frequency function associated to the Laguerre-P\'{o}lya function $F_{B_{\nu}^2}(z)/z$ on a vertical strip containing the origin. We remark that $F_{B_{\nu}^2}(z)/z$ does not have a zero at the origin anymore. We may then apply dominated convergence in \eqref{Sec5_esto_dg} to conclude that
\begin{equation*}
\lim_{x \to 0^-} d_\nu^+(2, \mu,x) = \left\{\lim_{x \to 0^-} \frac{B_{\nu}^2(x)}{x^2} \right\}\int_0^{\infty}\left(- g( - \pi \lambda) - \int_{0}^{\pi\lambda}   g'(w-\pi \lambda) \,\dw \right) \, \dmu = 0.
\end{equation*}
The result now follows since $D_{\nu,N}^+(2, \mu,\x) = d_\nu^+(2, \mu,-|\x|)$.
\end{proof}

The following lemma is crucial to prove the optimality part of Theorem \ref{Thm3}.

\begin{lemma}\label{Sec4_Cor22}
Let $\nu>-1$ and $G:\C \to \C$ be a real entire function of exponential type at most $2$ such that $G\in L^1(\R, |E_\nu(x)|^{-2}\, \dx)$. Then there exist $W,T\in\H(E_{\nu})$ such that 
\begin{align*}
G(z) = W(z)W^*(z) - T(z)T^*(z)
\end{align*}
for all $z \in \C$.
\end{lemma}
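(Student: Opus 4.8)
The plan is to reduce the statement to the difference-of-squares decomposition already available for nonnegative functions, namely Lemma \ref{UU*-decomp}(i) (specialized to $E=E_\nu$), by first producing a nonnegative majorant of $G$ inside the class $L^1(\R,|E_\nu(x)|^{-2}\,\dx)$ of exponential type at most $2$. Concretely, first I would split $G = G_1 - G_2$ where $G_1$ and $G_2$ are real entire of exponential type at most $2$ and nonnegative on $\R$: the standard device is to write $G(x) = G(x)_+ - G(x)_-$ only at the level of pointwise values and instead use that $G$ has exponential type $\le 2$, so one may add a suitable nonnegative entire function. The cleanest route is to invoke the majorant $z\mapsto M(B_\nu^2,\lambda,z)$ of the Gaussian constructed in Lemma \ref{L-thm}(ii) and Theorem \ref{Thm1}(ii): for a fixed $\lambda>0$ with $\int e^{-\pi\lambda x^2}|E_\nu(x)|^{-2}\,\dx<\infty$, the function $z\mapsto M(B_\nu^2,\lambda,z)$ has exponential type at most $2$, lies in $L^1(\R,|E_\nu(x)|^{-2}\,\dx)$, and satisfies $M(B_\nu^2,\lambda,x)\ge e^{-\pi\lambda x^2}>0$ on $\R$. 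Since $G\in L^1(\R,|E_\nu(x)|^{-2}\,\dx)$, the real entire function $x\mapsto |G(x)|$ is not entire, but $G^2$ is; the trick is that for $G$ of exponential type $\le 2$ and in $L^1$, one can find a real entire $P$ of exponential type $\le 2$, nonnegative on $\R$, with $P\ge G$ on $\R$ and $P\in L^1(\R,|E_\nu(x)|^{-2}\,\dx)$ — take $P = c\cdot M(B_\nu^2,\lambda,\cdot)$ for a large constant $c$ plus, if needed, a nonnegative entire function controlling the tail, but in fact $G/M(B_\nu^2,\lambda,\cdot)$ is bounded on $\R$ (both are in appropriate weighted spaces and $M$ stays bounded below away from $0$ only in a weak sense, so one should instead bound $G$ directly).

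Let me restructure: the honest approach is as follows. First, since $G$ is real entire of exponential type at most $2$ and $G\in L^1(\R,|E_\nu(x)|^{-2}\,\dx)$, I claim there is a real entire function $M_0$ of exponential type at most $2$, nonnegative on $\R$, belonging to $L^1(\R,|E_\nu(x)|^{-2}\,\dx)$, with $M_0(x)\ge G(x)$ for all $x\in\R$. Granting this, apply Lemma \ref{UU*-decomp}(i) to $M_0$ to get $M_0 = W W^*$ with $W\in\H(E_\nu)$; then apply Lemma \ref{UU*-decomp}(i) again to the real entire function $M_0 - G$, which has exponential type at most $2$, is nonnegative on $\R$, and lies in $L^1(\R,|E_\nu(x)|^{-2}\,\dx)$ (as a difference of two such functions), obtaining $M_0 - G = T T^*$ with $T\in\H(E_\nu)$. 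Subtracting gives $G = M_0 - (M_0 - G) = W W^* - T T^*$, which is the desired decomposition. So the whole proof hinges on the claim, i.e., on producing the nonnegative majorant $M_0$.

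To prove the claim, I would use the majorant of the Gaussian provided by Theorem \ref{Thm2}(iii)/(iv) together with a comparison estimate. Fix any $\lambda>0$; then $\int_{-\infty}^\infty e^{-\pi\lambda x^2}|E_\nu(x)|^{-2}\,\dx<\infty$ by Lemma \ref{Sec5_rel_facts}(i), and $z\mapsto M(B_\nu^2,\lambda,z)$ from Lemma \ref{L-thm}(ii) is real entire of exponential type at most $2$, lies in $L^1(\R,|E_\nu(x)|^{-2}\,\dx)$ by Lemma \ref{L1-element}, and is $\ge e^{-\pi\lambda x^2}$ on $\R$. The issue is that $M(B_\nu^2,\lambda,x)$ decays in $x$, while $G$ need only be $o$ of the weight, so $G$ could dominate it. To fix this I would instead take $M_0(x) = \bigl(\tfrac{G(x)+c\,\Phi(x)}{2}\bigr)^2\big/\Phi(x) + \cdots$ — no; the clean fix is: since $G\in L^1(\R,|E_\nu|^{-2}\,\dx)$ and has exponential type $\le 2$, write $2G = (\,\cdot\,)$, or more simply apply Lemma \ref{UU*-decomp}(i) not to a majorant but directly after a regularization. \textbf{The main obstacle, and the step I would spend the most care on, is exactly the construction of the nonnegative $L^1$-majorant $M_0$ of exponential type $\le 2$}; the resolution I expect to use is that $G + \varepsilon^{-1}G^2\Phi_\varepsilon$-type smoothing is unavailable in the entire-function category, so instead one observes that $G = \tfrac14\bigl((G+1)^2 - (G-1)^2\bigr)$ would break the type bound, and the correct device is the classical Fejér-type factorization: a real entire function of exponential type $2\sigma$ that is nonnegative on $\R$ and in $L^1$ against a weight is a square, but an arbitrary real $G$ of exponential type $2$ is handled by noting $G = \re G$ plus using that $|G(x)|^2/P(x)$ for a fixed positive $P=M(B_\nu^2,\lambda,\cdot)$ of type $\le 2$ is of exponential type $\le 2$ and in $L^1$ — hence nonnegative-and-$L^1$, hence a square $Q Q^*$ — and then $G \le \tfrac12(P + |G|^2/P)$ pointwise with the right side of type $\le 2$ and in $L^1$. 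Choosing $M_0 = \tfrac12\bigl(P + |G|^2/P\bigr)$ (interpreted as the entire function $\tfrac12(P + G G^*/P)$, which is entire because $P$ has no real zeros and $G G^* \le P^2$ forces the quotient to be entire only if $P\mid GG^*$, which generally fails) — so even this needs the extra step of replacing $P$ by a positive entire function whose zeros are among those of $GG^*$. I would therefore cite the precise lemma from \cite{CL3} handling this (the analogue of Lemma \ref{UU*-decomp}(i) for signed functions), which is presumably exactly how the authors proceed, reducing Lemma \ref{Sec4_Cor22} to a one-line application of that result together with Lemma \ref{Sec5_rel_facts}.
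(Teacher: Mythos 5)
The paper's proof of Lemma~\ref{Sec4_Cor22} is a single line: it cites \cite[Theorem~23]{CL3}. Your final guess, that the authors reduce the matter to citing the precursor paper, is correct.

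That said, the bulk of your proposal is an attempt to prove the lemma from scratch, and this attempt does not close. Your reduction scheme --- construct a nonnegative real entire $M_0$ of exponential type at most $2$ in $L^1(\R,|E_\nu(x)|^{-2}\,\dx)$ with $M_0(x)\ge G(x)$ on $\R$, apply Lemma~\ref{UU*-decomp}(i) to both $M_0$ and $M_0-G$, and subtract --- is logically valid \emph{if} such an $M_0$ exists. But this reduction is circular in substance: the existence of that $M_0$ is equivalent to the lemma itself, since once $G=WW^*-TT^*$ one may simply take $M_0=WW^*$. So you have restated the problem rather than simplified it. Each of your candidate constructions of $M_0$ breaks down for exactly the reasons you flag: a scalar multiple of the Gaussian majorant $M(B_\nu^2,\lambda,\cdot)$ decays too fast to dominate a generic $L^1(\R,|E_\nu|^{-2}\dx)$-function $G$ of type $2$; the arithmetic-geometric trick $M_0=\tfrac12(P+GG^*/P)$ requires $P\mid GG^*$ in the ring of entire functions, which fails since $P$ has (complex) zeros that $G$ need not share; and $|G|$ is not entire. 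You end by conceding the obstacle and citing CL3, which is what the paper does, but it is worth being explicit that the cited result is genuinely nontrivial and is \emph{not} a formal corollary of the nonnegative Krein-type factorization in Lemma~\ref{UU*-decomp}(i), precisely because the requisite majorant is what must be built. (A mechanism that does work in this paper --- decomposing a minorant $L$ of the Gaussian by subtracting it from the explicit Gaussian majorant, as in the proof of Theorem~\ref{Thm1} --- is unavailable here because $G$ is not assumed to minorize anything.) One small further note: the paper's proof does not invoke Lemma~\ref{Sec5_rel_facts} at this point, contrary to your closing sentence.
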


\begin{proof}
This is \cite[Theorem 23]{CL3}.
\end{proof}

\subsection{Proof of Theorem \ref{Thm3}} 

\subsubsection{Proof of parts {\rm (ii) and (iii)}} We shall prove these two parts together. Throughout this proof we denote by $\overline{B}(r)$ the closed ball of radius $r$ centered at the origin, and the parameter $\nu>-1$ is regarded as fixed.

\smallskip

\noindent {\it Existence}. 

\smallskip

We prove first the existence of minorants and majorants that interpolate our given function at the "correct'' radii. This is done in three steps. First we consider the particular dimension $N_{\nu} = \lceil 2\nu +2 \rceil$. A symmetrization argument will prove the existence for dimension $N=1$ and, finally, the radial extension will allow us to conclude the result for arbitrary $N$. The reason to start with the dimension $N_\nu$ lies in the fact that, in this dimension, all considered functions are known to define tempered distributions, which allows us to employ the Paley-Wiener theorem for distributions to establish the exponential type of the extremal functions. Since symmetrization and radial extension preserve the exponential type, their application allows us to establish the required exponential type first for $N=1$, and finally for arbitrary $N$.

\smallskip

We consider below the minorant problem (the majorant is analogous).

\smallskip

\noindent{{\it Step 1}. The case $N_{\nu} = \lceil 2\nu +2 \rceil$.}

\smallskip

Recall that $\G_{\mu, N_{\nu}}$ is assumed to be a tempered distribution satisfying
\begin{align}\label{Gmu-transform}
\widehat{\G}_{\mu, N_{\nu}}(\t) = \int_0^\infty \lambda^{-N_{\nu}/2} \, e^{-\pi|\t|^2/\lambda} \,\d\mu(\lambda)
\end{align}
for $|\t| >  \delta/2\pi$ (this follows from \eqref{Intro_Def_via_FT}).  The function $\x \mapsto D_{\nu, N_{\nu}}^-(\delta, \mu,{\bf x})$ defined in \eqref{Sec6_eq1_int2} belongs to $L^1(\R^{N_{\nu}}, |\x|^{2\nu + 2- N_{\nu}} \d\x)$. In particular, this function is integrable in $\overline{B}(1)$ (here it is important to have $N_{\nu} \geq 2\nu +2$) and thus defines a tempered distribution.

  \begin{lemma}\label{lemma_step_1} Let $\varphi$ be a Schwartz function that vanishes in $\overline{B}(\tfrac{\delta}{2\pi} + \varepsilon)$, where $\varepsilon >0$. Then
   \begin{equation}\label{Sec5_dist_eq_for_L}
 \int_{\R^{N_{\nu}}} \mc{L}_{\nu, N_{\nu}}(\delta,\lambda, \x) \,\widehat{\varphi}(\x) \,\d\x = 0.
\end{equation}
\end{lemma}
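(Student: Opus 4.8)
The plan is to show that the Fourier transform of $\x\mapsto\mc{L}_{\nu,N_\nu}(\delta,\lambda,\x)$, as a distribution, is supported in the closed ball $\overline{B}(\tfrac{\delta}{2\pi})$, so that pairing it against any Schwartz function $\varphi$ vanishing on a neighborhood $\overline{B}(\tfrac{\delta}{2\pi}+\varepsilon)$ of that support gives zero. The key point is that $\mc{L}_{\nu,N_\nu}(\delta,\lambda,\cdot)$ is an entire function of $N_\nu$ complex variables of exponential type at most $\delta$ (with respect to the Euclidean ball) by Theorem \ref{Thm2}(iv), and it is bounded on $\R^{N_\nu}$ by $\mc{F}_\lambda$, hence lies in $L^\infty(\R^{N_\nu})$ and therefore defines a tempered distribution. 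The Paley--Wiener--Schwartz theorem for tempered distributions then tells us precisely that $\widehat{\mc{L}}_{\nu,N_\nu}(\delta,\lambda,\cdot)$ is supported in $\overline{B}(\tfrac{\delta}{2\pi})$ (the factor $2\pi$ coming from the normalization of the Fourier transform used in the paper, so that exponential type $\delta$ corresponds to frequency support in a ball of radius $\delta/2\pi$).

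\textbf{Key steps, in order.} First I would invoke Theorem \ref{Thm2}(iv) to record that $\mc{L}_{\nu,N_\nu}(\delta,\lambda,\cdot)\in\E_\delta^{N_\nu-}(\mc{F}_\lambda)$, so it is entire of exponential type at most $\delta$ and satisfies $\mc{L}_{\nu,N_\nu}(\delta,\lambda,\x)\le e^{-\pi\lambda|\x|^2}\le 1$ on $\R^{N_\nu}$; combined with the fact that it is real-valued (indeed radial) on $\R^{N_\nu}$, this bounds it in absolute value on $\R^{N_\nu}$ — for a lower bound one uses that a radial entire function of exponential type that is bounded above on $\R^{N_\nu}$ and of finite exponential type is in fact bounded (alternatively, since $M(B_\nu^2,\lambda,\cdot)-L(A_\nu^2,\lambda,\cdot)\ge 0$ is of exponential type and integrable against $|x|^{2\nu+1}$, it is itself bounded on $\R$ by the same Lemma-\ref{Sec5_rel_facts}-type estimate, forcing $L(A_\nu^2,\lambda,\cdot)$, and hence $\psi_{N_\nu}$ of it, to be bounded below). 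Second, I would conclude that $\mc{L}_{\nu,N_\nu}(\delta,\lambda,\cdot)\in L^\infty(\R^{N_\nu})\subset\mathcal{S}'(\R^{N_\nu})$, so its distributional Fourier transform makes sense. Third, I would apply the Paley--Wiener--Schwartz theorem (cf.\ \cite[pp.\ 111--112]{SW}) with the convex symmetric body being the Euclidean unit ball to deduce $\support{\widehat{\mc{L}}_{\nu,N_\nu}(\delta,\lambda,\cdot)}\subseteq\overline{B}(\tfrac{\delta}{2\pi})$. Finally, since $\varphi$ vanishes on the open set $\{|\x|<\tfrac{\delta}{2\pi}+\varepsilon\}\supset\overline{B}(\tfrac{\delta}{2\pi})$, the pairing $\langle\widehat{\mc{L}}_{\nu,N_\nu}(\delta,\lambda,\cdot),\varphi\rangle = \langle\mc{L}_{\nu,N_\nu}(\delta,\lambda,\cdot),\widehat{\varphi}\rangle$ is zero, which is exactly \eqref{Sec5_dist_eq_for_L}.

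\textbf{Main obstacle.} The one point requiring care is the boundedness of $\mc{L}_{\nu,N_\nu}(\delta,\lambda,\cdot)$ on $\R^{N_\nu}$ — a minorant of a bounded function is only bounded \emph{above} a priori. The clean way around this is to work at the level of the one-dimensional building block: by Lemma \ref{L-thm} and the structure of the proof of Theorem \ref{Thm2}, the difference $x\mapsto M(B_\nu^2,\lambda,x)-L(A_\nu^2,\lambda,x)$ is a nonnegative real entire function of exponential type at most $2$ lying in $L^1(\R,|x|^{2\nu+1}\dx)$; such a function is bounded on $\R$ (a nonnegative function of exponential type that is integrable against a polynomially growing weight has bounded square root in the relevant de Branges space, hence is bounded), and since $M(B_\nu^2,\lambda,\cdot)$ is bounded above by an argument analogous to \eqref{Sec5_eq3_majorant_B} together with Lemma \ref{Sec5_rel_facts}, we get that $L(A_\nu^2,\lambda,\cdot)$ is bounded on $\R$; applying $\psi_{N_\nu}$ and using that $\psi_{N_\nu}$ of a bounded-on-$\R$ even entire function of exponential type is bounded on $\R^{N_\nu}$ (since its values on $\R^{N_\nu}$ are values of the one-variable function on $\R$, by $\psi_N(F)(\x)=F(|\x|)$ for $|\x|$ real) finishes the argument. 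With boundedness in hand, everything else is a direct application of Paley--Wiener--Schwartz.
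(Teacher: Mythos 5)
Your overall strategy matches the paper's: show via a Paley--Wiener theorem that the distributional Fourier transform of $\x\mapsto\mc{L}_{\nu,N_\nu}(\delta,\lambda,\x)$ is supported in $\overline{B}(\delta/2\pi)$, so that the pairing with $\widehat{\varphi}$ vanishes. But your route to the growth hypothesis of the distributional Paley--Wiener theorem has gaps. The assertion that ``a radial entire function of exponential type that is bounded above on $\R^{N_\nu}$ is in fact bounded'' is false: $\z\mapsto -(z_1^2+\cdots+z_{N_\nu}^2)$ is radial, entire, of exponential type $0$, bounded above on $\R^{N_\nu}$, yet unbounded below. Your alternative argument (write $M(B_\nu^2,\lambda,\cdot)-L(A_\nu^2,\lambda,\cdot)=WW^*$ with $W\in\H(E_\nu)$ and get boundedness from the reproducing kernel estimate $|W(x)|^2\le\|W\|^2K_\nu(x,x)$) breaks down for $-1<\nu<-1/2$, since there $K_\nu(x,x)\sim\pi^{-1}c_\nu^{-1}|x|^{-2\nu-1}\to\infty$ by \eqref{Intro_Asymp_K} and the kernel bound yields nothing. (Boundedness of $L(A_\nu^2,\lambda,\cdot)$ on $\R$ \emph{is} true, but the right justification is the direct estimate in the proof of Lemma~\ref{L1-element}: $|L(A_\nu^2,\lambda,x)-e^{-\pi\lambda x^2}|\le c\,A_\nu(x)^2/x^2\sim C|x|^{-2\nu-3}\to 0$, valid since $\nu>-3/2$.) Even granting boundedness, you would still need the several-variable Plancherel--P\'olya inequality to upgrade an $L^\infty(\R^{N_\nu})$ bound plus finite exponential type to the estimate $|F(\z)|\le C(1+|\z|)^M e^{\delta|\y|}$ required by the distributional Paley--Wiener theorem; you do not address this step.

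The paper avoids these issues with a division trick that is absent from your proposal. Since $L(A_\nu^2,\lambda,\cdot)$ is even, of exponential type at most $2$, and in $L^1(\R,|x|^{2\nu+1}\,\dx)$, it must vanish at some pair of points $\pm a$; dividing $\mc{L}_{\nu,N_\nu}(\delta,\lambda,\z)$ by $(\delta/2)^2(z_1^2+\cdots+z_{N_\nu}^2)-a^2$ gives an entire $\mc{J}$ of exponential type at most $\delta$ lying in $L^1(\R^{N_\nu})$, where the constraint $N_\nu\le 2\nu+4$ is used. Plancherel--P\'olya then gives $\mc{J}\in L^\infty\cap L^2$; the $L^2$ Paley--Wiener theorem makes $\widehat{\mc{J}}$ continuous with support in $\overline{B}(\delta/2\pi)$; Fourier inversion yields $|\mc{J}(\z)|\le Ce^{\delta|\y|}$; and multiplying back by the quadratic produces exactly the polynomial-times-exponential bound that the distributional Paley--Wiener theorem requires, without ever needing $\mc{L}_{\nu,N_\nu}(\delta,\lambda,\cdot)$ to be bounded on $\R^{N_\nu}$. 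Your approach can be repaired along the lines indicated above, but the paper's route stays entirely within $L^1$ and $L^2$ and is technically cleaner.
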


We give the proof of Lemma \ref{lemma_step_1} below. Assuming its validity, we show next that  
\begin{equation}\label{Sec5_FT_D_nu}
\widehat{D_{\nu, N_{\nu}}^-}(\delta, \mu,{\bf t}) = \int_0^\infty \lambda^{-N_{\nu}/2} \,e^{-\pi|\t|^2/\lambda} \,\d\mu(\lambda)
\end{equation}
for $|\t| > \delta/2\pi$. In fact, if $\varphi$ is a Schwartz function that vanishes in $\overline{B}(\tfrac{\delta}{2\pi} + \varepsilon)$, where $\varepsilon >0$, then
\begin{align}\label{Sec5_Dist_Arg}
\begin{split}
\int_{\R^{N_{\nu}}} D_{\nu, N_{\nu}}^-(\delta, \mu,{\bf x})\,\widehat{\varphi}(\x)\,\d\x & =\int_{\R^{N_{\nu}}} \int_0^{\infty} \Big\{e^{-\pi \lambda |{\bf x}|^2} - \mc{L}_{\nu, N_{\nu}}(\delta,\lambda, \x)\Big\} \,\widehat{\varphi}(\x) \, \dmu\,\d\x\\
& = \int_0^{\infty} \int_{\R^{N_{\nu}}} \Big\{e^{-\pi \lambda |{\bf x}|^2} - \mc{L}_{\nu, N_{\nu}}(\delta,\lambda, \x)\Big\} \,\widehat{\varphi}(\x) \,\d\x
\, \dmu\\
& = \int_0^{\infty}  \int_{\R^{N_{\nu}}} \lambda^{-N_\nu/2} e^{-\pi|\t|^2/\lambda}\,\varphi(\t)\,\d\t\, \dmu\\
& =  \int_{\R^{N_{\nu}}} \left( \int_0^{\infty}  \lambda^{-N_\nu/2} \, e^{-\pi|\t|^2/\lambda}\, \dmu\right)  \varphi(\t)\,\d\t.
\end{split}
\end{align}

\smallskip

From \eqref{Gmu-transform} and \eqref{Sec5_FT_D_nu} we see that the function
\begin{equation*}
\x \mapsto \G_{\mu, N_{\nu}}(\x) - D_{\nu, N_{\nu}}^-(\delta, \mu,{\bf x})
\end{equation*}
defines a tempered distribution, with Fourier transform supported on $\overline{B}(\delta/2\pi)$. It follows by the converse of the Paley-Wiener theorem for distributions that this function is equal almost everywhere on $\R^{N_{\nu}}$ to the restriction of an entire function of exponential type $\delta$. We call this entire function $\z \mapsto \L_{\nu, N_{\nu}}(\delta,\mu, \z)$. By Lemma \ref{Sec6_Lem23} and the assumption that $\x \mapsto \G_{\mu, N_{\nu}}(\x)$ is continuous on $\R^{N_{\nu}} \setminus \{0\}$, we find that
\begin{equation}\label{Sec5_eq_def_L_nu_rad}
\L_{\nu, N_{\nu}}(\delta,\mu, \x) = \G_{\mu, N_{\nu}}(\x) - D_{\nu, N_{\nu}}^-(\delta, \mu,{\bf x})
\end{equation}
for all $\x \neq 0$. Since $\x \mapsto D_{\nu, N_{\nu}}^-(\delta, \mu,{\bf x})$ is nonnegative, this implies that  
\begin{equation}\label{Sec5_cond1_import}
\L_{\nu, N_{\nu}}(\delta,\mu, \x) \leq \G_{\mu, N_{\nu}}(\x)
\end{equation}
for all $\x \in \R^{N_{\nu}}$, and by Lemma \ref{Sec6_Lem23} it follows that
\begin{equation}\label{Sec5_cond2_import}
\L_{\nu, N_{\nu}}(\delta,\mu, \x) = \G_{\mu, N_{\nu}}(\x)
\end{equation}
whenever $A_{\nu}(\delta |\x|/2)=0$. An application of Fubini's theorem gives us
\begin{equation}\label{Sec5_eva_case_N_nu}
\int_{\R^{N_{\nu}}} \big\{\G_{\mu, N_{\nu}}(\x) - \L_{\nu, N_{\nu}}(\delta,\mu,\x)\big\} \,|\x|^{2\nu+2-N_{\nu}} \,\d\x = \int_0^\infty U_\nu^{N_{\nu}-}(\delta,\lambda)\, \d\mu(\lambda).
\end{equation}
Hence, to conclude the proof of Theorem \ref{Thm3} for $N = N_\nu$ it remains to show Lemma \ref{lemma_step_1}.

\begin{proof}[Proof of Lemma \ref{lemma_step_1}] Recall from Theorem \ref{Thm2} that  
$$\mc{L}_{\nu, N_{\nu}}(\delta,\lambda, \z) = \psi_{N_{\nu}}(L(A_{\nu}^2,\lambda, \cdot))(\delta \z/2),$$ 
where $z \mapsto L(A_{\nu}^2,\lambda, z)$ is even, of exponential type at most $2$, and belongs to $L^1(\R, |x|^{2 \nu +1}\,\dx)$. This implies that $z \mapsto L(A_{\nu}^2,\lambda, z)$ must have at least one zero, for otherwise we would have $L(A_{\nu}^2,\lambda, z) = e^{cz +d}$ with $|c|\leq 2$, contradicting the fact that it belongs to $L^1(\R, |x|^{2 \nu +1}\,\dx)$. Then, if $a$ is a zero of $z \mapsto L(A_{\nu}^2,\lambda, z)$, so is $-a$ since the function is even. Hence, $J$ defined by
$$J(z) = \frac{L(A_{\nu}^2,\lambda, z)}{(z^2 - a^2)}$$
is entire and has exponential type at most $2$. It follows that 
$$\mc{J}(z) := \psi_{N_{\nu}}(J)(\delta \z/2) = \frac{\mc{L}_{\nu, N_{\nu}}(\delta,\lambda, \z)}{(\delta/2)^2(z_1^2 + \ldots + z_{N_{\nu}}^2) - a^2}$$
is entire of exponential type at most $\delta$. Since $\x \mapsto \mc{L}_{\nu, N_{\nu}}(\delta,\lambda, \x)$ belongs to $L^1(\R^{N_\nu}, |\x|^{2 \nu +2 - N_{\nu}}\,\d\x)$, we have that $\x \mapsto \mc{J}(\x) \in L^1(\R^{N_{\nu}})$ (here it is important to have $N_{\nu} \leq 2\nu +4$). By a classical result of Plancherel and P\'{o}lya \cite{PP}, $\mc{J}$ is bounded on $\R^{N_{\nu}}$ and thus belongs to $L^2(\R^{N_{\nu}})$. We now use the Paley-Wiener theorem in several variables \cite[Chapter III, Theorem 4.9]{SW} to conclude that $\widehat{\J}$ is a continuous function supported on $\overline{B}(\delta/2\pi)$ and hence, by Fourier inversion,
\begin{equation*}
\mc{J}(\z) = \int_{\overline{B}(\delta/2\pi)} \widehat{\mc{J}}(\t)\, e^{2\pi i \t \cdot \z}\, \d\t.
\end{equation*}
This shows that $|\mc{J}(\z)| \leq C e^{\delta |\y|}$, for $\z = \x + i\y$, and thus
\begin{equation} \label{Sec5_est_nec_PW_dist}
|\mc{L}_{\nu, N_{\nu}}(\delta,\lambda, \z)| \leq C (|\delta\z/2|^2 - a^2) e^{\delta |\y|}.
\end{equation}
The bound \eqref{Sec5_est_nec_PW_dist} allows us to invoke the Paley-Wiener theorem for distributions \cite[Theorem 1.7.7]{Hor} to conclude that the distributional Fourier transform of $\x \mapsto \mc{L}_{\nu, N_{\nu}}(\delta,\lambda, \x)$ is supported on $\overline{B}(\delta/2\pi)$ and thus \eqref{Sec5_dist_eq_for_L} holds. 
\end{proof}

\smallskip

\noindent {\it Step 2}. The case $N=1$.

\smallskip

By \eqref{Sec5_eq_def_L_nu_rad}, the entire function $\z \mapsto \L_{\nu, N_{\nu}}(\delta,\mu, \z)$ is radial when restricted to $\R^{N_{\nu}}$. This implies that 
$$\L_{\nu, N_{\nu}}(\delta,\mu, \z) = \widetilde{\L}_{\nu, N_{\nu}}(\delta,\mu, \z)$$
and Lemma \ref{type-to-ntype} (ii) implies that $\z \mapsto \L_{\nu, N_{\nu}}(\delta,\mu, \z)$ admits a power series representation as in \eqref{Sec4_PWRep}. Define the entire function $z \mapsto L_\nu(\delta,\mu,z)$ by 
\begin{align}\label{Sec5_L_nu_N1}
L_\nu(\delta,\mu,z) = \L_{\nu, N_{\nu}}(\delta,\mu,(z,0,\ldots,0)).
\end{align}
 By Lemma \ref{type-to-ntype} the function $z \mapsto L_\nu(\delta,\mu,z)$ has exponential type at most $\delta$, and we plainly observe that the analogues of \eqref{Sec5_cond1_import} and \eqref{Sec5_cond2_import} for $N=1$ hold for all real $x$. A change to radial variables in 
\eqref{Sec5_eva_case_N_nu} gives 
\begin{equation}\label{Sec5_eva_case_N_nu_d_1}
\int_{\R} \big\{g_\mu(x) - L_\nu(\delta,\mu,x)\big\} \,|x|^{2\nu+1} \,\dx = \int_0^\infty U_\nu^{1-}(\delta,\lambda)\, \d\mu(\lambda).
\end{equation}

\smallskip

\noindent {\it Step 3}. The case of general $N$.

\smallskip

We now define for $\z\in \C^N$
\begin{equation*}
\L_{\nu, N}(\delta,\mu, \z) = \psi_N(L_\nu(\delta,\mu,\cdot))(\z)
\end{equation*}
with $L_\nu(\delta,\mu,\cdot)$ from \eqref{Sec5_L_nu_N1}. By Lemma \ref{type-to-ntype} this function has exponential type at most $\delta$, and we observe again that the analogues of \eqref{Sec5_cond1_import} and \eqref{Sec5_cond2_import} continue to hold. It follows from \eqref{Sec5_eva_case_N_nu_d_1} and Theorem \ref{Thm2} that 
\begin{equation}\label{Sec5_eva_case_N_nu_d_N}
\int_{\R^N} \big\{\G_{\mu,N}(\x) - \L_{\nu,N}(\delta,\mu,\x)\big\} \,|\x|^{2\nu+2 - N} \d\x = \int_0^\infty U_\nu^{N-}(\delta,\lambda)\, \d\mu(\lambda).
\end{equation}

\smallskip

\noindent{\it Optimality}. 

\smallskip

Let $\mc{L} \in \E_{\delta}^{N-} (\mc{G}_{\mu,N})$ be such that 
\begin{equation}\label{Sec5_Opt_part_int_1}
\int_{\R^N} \big\{\mc{G}_{\mu,N}(\x) - \mc{L}(\x)\big\}\,|\x|^{2\nu+2-N}\,\d\x < \infty.
\end{equation}
By changing $\L$ by $\widetilde{\L}$ we may assume that $\L$ is radial on $\R^N$ (note that this does not affect the value on the integral \eqref{Sec5_Opt_part_int_1}). In this case, $\x \mapsto \{\L_{\nu, N}(\delta,\mu, \x) - \L(\x)\} \in L^1(\R^N, |\x|^{2\nu+2-N}\,\d\x)$. Let $L:\C \to \C$ be defined by $L(z) = \L((z,0, \ldots, 0))$. We want to show that 
\begin{equation}\label{Sec5_optimality_ntp}
\tfrac12 \omega_{N-1} \int_{-\infty}^{\infty} \big\{ L_\nu(\delta,\mu,x) - L(x) \big\}\,|x|^{2\nu +1}\,\dx = \int_{\R^N} \big\{\L_{\nu, N}(\delta,\mu, \x) - \mc{L}(\x)\big\}\,|\x|^{2\nu+2-N}\,\d\x  \geq 0. 
\end{equation}
Let 
$$G(z) = L_\nu(\delta,\mu,2z/\delta) - L(2z/\delta).$$ 
Then $G$ has exponential type at most $2$, and by \eqref{Lem17_i} it belongs to $L^1(\R, |E_\nu(x)|^{-2}\, \dx)$. We are now able to use Lemma \ref{Sec4_Cor22} to write $G = WW^* - TT^*$, with $W, T \in \H(E_{\nu})$. It follows from \eqref{Lem17_ii} and the fact that $\{z \mapsto K_{\nu}(\xi, z);\ A_{\nu}(\xi) = 0\}$ is a complete orthogonal set in $\H(E_{\nu})$ that
\begin{align*}
\int_{-\infty}^{\infty} G(x) \,|x|^{2\nu +1}\,\dx & = \int_{-\infty}^{\infty}  \big\{ |W(x)|^2 - |T(x)|^2\big\} \,|x|^{2\nu +1}\,\dx\\
& = c_{\nu}^{-1} \int_{-\infty}^{\infty}  \big\{ |W(x)|^2 - |T(x)|^2\big\} \,|E_{\nu}(x)|^{-2}\,\dx\\
& = c_{\nu}^{-1} \sum_{A_{\nu}(\xi) =0} \frac{|W(\xi)|^2 - |T(\xi)|^2}{K_{\nu}(\xi,\xi)} \\
& = c_{\nu}^{-1} \sum_{A_{\nu}(\xi) =0} \frac{G(\xi)}{K_{\nu}(\xi,\xi)}\\
& \geq 0.
\end{align*}
In the last step we have used the interpolating property \eqref{Sec5_cond2_import}, which translates to
\begin{equation*}
L_\nu(\delta,\mu,2\xi/\delta) = g_{\mu}(2\xi/\delta) \geq L(2\xi/\delta),
\end{equation*}
whenever $A_{\nu}(\xi) = 0$. This establishes \eqref{Sec5_optimality_ntp}.

\subsubsection{Proof of part {\rm (i)}} For $\kappa >0$, recall the definition of the measure $\mu_{\kappa}$ given in \eqref{def_mu_kappa}.  We observe from part (ii) of Theorem \ref{Thm3} and from part (i) of Theorem \ref{Thm2} that
\begin{align*}
U_\nu^{N-}(\delta, \mu) & =\int_0^\infty U_\nu^{N-}(\delta,\lambda)\, \d\mu(\lambda) = \int_0^\infty \kappa^{2\nu+2} \,U_\nu^{N-}(\kappa \delta, \kappa^2\lambda) \,\d\mu(\lambda) = \kappa^{2\nu+2} \int_0^\infty U_\nu^{N-} (\kappa\delta,\lambda)\,\d\mu_{\kappa^{-2}}(\lambda) \\
&=  \kappa^{2\nu+2}  \, U_\nu^{N-}(\kappa\delta,\mu_{\kappa^{-2}}),
\end{align*}
and analogously for the majorant case.

\section{Applications} \label{App}

\subsection{Some examples} We start with a collection of interesting examples of radial functions included in our Gaussian subordination framework.

\subsubsection{Functions with exponential subordination} \label{Comparison_Exp_sub} In \cite{CL3}, the extremal problem \eqref{Intro_EP1} - \eqref{Intro_EP2} was solved for a class of functions of the form
\begin{equation}\label{Sec6_exp_sub_class}
G(\x) = \int_0^{\infty} \big\{e^{-\tau |\x|} - e^{-\tau}\big\}\, \d\sigma (\tau),
\end{equation}
where $\sigma$ is a nonnegative Borel measure on $(0,\infty)$ satisfying 
\begin{equation}\label{Sec6_cond_min_exp_sub}
\int_0^{\infty} \frac{\tau}{1 + \tau^{2\nu +3}}\,\d\sigma(\tau) < \infty 
\end{equation}
for the minorant problem, and 
\begin{equation*}
\int_0^{\infty} \frac{\tau}{1 + \tau}\,\d\sigma(\tau) < \infty 
\end{equation*}
for the majorant problem. This includes examples such as $\x \mapsto -\log |\x|$ and $\x \mapsto \Gamma(\alpha +1) \big\{|\x|^{-\alpha-1}-1\big\}$ for $-2 < \alpha < 2\nu +1$. Let us show that these functions are included in our Gaussian subordination framework.

\smallskip

Let $N = \lceil 2\nu +2 \rceil$. We start with the well-known identity
$$e^{-|\x|} = \frac{1}{\sqrt{\pi}} \int_0^\infty \frac{e^{-u}}{\sqrt{u}} e^{-\frac{|\x|^2}{4u}} \du\,,$$
and a change of variables gives us
$$e^{-\tau |\x|} =\frac{\tau}{2\pi} \int_0^\infty e^{-\pi \lambda |\x|^2}   \, e^{-\tau^2/(4\pi\lambda)} \frac{\d\lambda}{\lambda^{3/2}}.$$
Taking Fourier transforms on both sides leads to
$$C_N \frac{\tau}{(\tau^2 + 4 \pi^2 |\t|^2)^{\frac{N+1}{2}}} = \frac{\tau}{2\pi} \int_0^\infty \lambda^{-N/2}\, e^{-\pi |\t|^2/\lambda} \,e^{-\tau^2/(4\pi\lambda)} \frac{\d\lambda}{\lambda^{3/2}},$$
where $C_N = 2^N \, \pi^{(N-1)/2}\, \Gamma(\tfrac{N+1}{2})$ (see \cite[p. 6]{SW}). A standard argument as in \eqref{Sec5_Dist_Arg} gives us
\begin{align*}
\int_{\R^{N}} G(\x)\,\widehat{\varphi}(\x)\,\d\x &= \int_{\R^{N} }\left(C_N \int_0^{\infty} \frac{\tau}{(\tau^2 + 4 \pi^2 |\t|^2)^{\frac{N+1}{2}}} \,\d\sigma(\tau)\right) \,\varphi(\t)\,\d\t\\
& = \int_{\R^{N} }\left(\int_0^\infty   \lambda^{-N/2}\, e^{-\pi |\t|^2/\lambda} \left(\int_0^\infty e^{-\tau^2/(4\pi\lambda)} \frac{\tau}{2\pi}\, \d\sigma(\tau)\right) \, \frac{\d\lambda}{\lambda^{3/2}}\right) \,\varphi(\t)\,\d\t
\end{align*}
for any Schwartz function $\varphi:\R^N \to \C$ vanishing in an open neighborhood of the origin. This establishes condition (Q4) with
\begin{equation}\label{Sec6_mu_com_from_exp}
\d\mu(\lambda) =  \left(\int_0^\infty e^{-\tau^2/(4\pi\lambda)} \frac{\tau}{2\pi}\, \d\sigma(\tau)\right) \, \frac{\d\lambda}{\lambda^{3/2}}.
\end{equation}
For the minorant problem we must verify that, under \eqref{Sec6_cond_min_exp_sub}, the measure $\d\mu$ defined by \eqref{Sec6_mu_com_from_exp} verifies \eqref{Intro_mu_1}. The proof for the majorant case will be analogous. In fact,
$$\int_0^1 \lambda\, \d\mu(\lambda) \leq \int_0^1 \left(\int_0^\infty e^{-\tau^2/(4\pi)} \frac{\tau}{2\pi}\, \d\sigma(\tau)\right) \, \frac{\d\lambda}{\lambda^{1/2}} < \infty,$$
and
\begin{align*}
\int_1^{\infty} \frac{1}{\lambda^{\nu +1}}\, \d\mu(\lambda) &= \int_1^{\infty}  \left(\int_0^1 e^{-\tau^2/(4\pi \lambda)} \frac{\tau}{2\pi}\, \d\sigma(\tau)\right) \, \frac{\d\lambda}{\lambda^{\nu +5/2}} + \int_1^{\infty}  \left(\int_1^{\infty} e^{-\tau^2/(4\pi \lambda)} \frac{\d\lambda}{\lambda^{\nu +5/2}}\right) \frac{\tau}{2\pi}\, \d\sigma(\tau)\\
& \leq \int_1^{\infty}  \left(\int_0^1  \frac{\tau}{2\pi}\, \d\sigma(\tau)\right) \, \frac{\d\lambda}{\lambda^{\nu +5/2}}  + \int_1^{\infty}  \left(\int_{0}^{\infty} e^{-1/(4\pi y)} \frac{\d y }{y^{\nu +5/2}}\right) \frac{\tau}{(2\pi)\, \tau^{2\nu +3}}\, \d\sigma(\tau) \\
& <\infty.
\end{align*}

\smallskip

Note that the Gaussian $x \mapsto e^{-\pi x^2}$ {\it is not included} in the class \eqref{Sec6_exp_sub_class}. In fact, suppose that 
\begin{equation}\label{Sec6_Gau_cannot}
e^{-\pi x^2} = \int_0^{\infty} \big\{e^{-\tau x} - e^{-\tau}\big\}\, \d\sigma (\tau)
\end{equation}
for $x>0$, where the measure $\sigma$ satisfies \eqref{Sec6_cond_min_exp_sub}. Since 
\begin{equation}\label{Sec6_gau_cannot2}
|e^{-\tau z} - e^{-\tau}| = \left| \int_1^z -\tau\, e^{-\tau s}\,\ds \right| \leq \tau\, e^{- c\tau}\, |z-1|
\end{equation}
for $\re(z) >0$, where  $c = \min\{\re(z), 1\}$, we see that both sides of \eqref{Sec6_Gau_cannot} have analytic continuations to the half-plane $\re(z) >0$. However, because of \eqref{Sec6_gau_cannot2}, the right-hand side grows linearly when $z = 1 + iy$, $y \to \infty$, while the left-hand side is given by $e^{-\pi (1 + 2iy - y^2)}$.

\smallskip

The previous discussion  shows that the class given by the Gaussian subordination method {\it is strictly larger} than the class \eqref{Sec6_exp_sub_class} obtained in \cite{CL3}, showing that this becomes indeed the most powerful framework to deal with this sort of extremal problems.

\subsubsection{Positive definite functions} We might consider a {\it finite} measure $\mu$ and the function given by
\begin{equation}\label{Sec6_Scho}
g_{\mu}(x) = \int_0^{\infty} e^{-\pi \lambda x^2}\,\dmu.
\end{equation}
It is straightforward to check that this function fits our Gaussian subordination framework and thus we can solve the extremal problem for its radial extension to $\R^N$, for any $N\geq 1$. Recall that, by a classical result of Schoenberg \cite[Theorems 2 and 3]{Scho}, a function $g$ admits the representation \eqref{Sec6_Scho} if and only if its radial extension to $\R^N$ is {\it positive definite} for all $N \geq 1$ or, equivalently, if the function $x \mapsto g(\sqrt{x})$, for $x >0$, is {\it completely monotone} (a function $f: [0,\infty) \to \R$ is said to be {\it completely monotone} if $(-1)^n\,f^{(n)}(t)\geq 0$ for $t>0$ and $n=0,1,2, \ldots,$ and $f(0) = f(0^+)$). Among such functions we highlight: 
\begin{itemize}
\item[(i)] $\x \mapsto e^{-\alpha |\x|^r}$, for $0 \leq r \leq 2$; 
\item[(ii)] $\x \mapsto (|\x|^2 + \alpha^2)^{-\beta}$, for $\alpha, \beta >0$; 
\item[(iii)] $\x \mapsto -\log[(|\x|^2 + \alpha^2)/(|\x|^2 + \beta^2)]$ for $0 < \alpha < \beta$.
\end{itemize}
In fact, the Gaussian subordination method also works if we take $\alpha =0$ in the family (iii) above. The one-dimensional extremal problem for the family (iii) was particularly important in bounding the modulus of the Riemann zeta-function on the critical strip under the Riemann hypothesis \cite{CC, CS}. 

\subsubsection{Power functions} \label{power_functions} Let $\gamma(s) = \pi^{-s/2}\,\Gamma(\tfrac{s}{2})$. The function $\x \mapsto \gamma(-\sigma)\,|\x|^{\sigma}$ for $\sigma > - 2\nu -2$ (minorant) and $\sigma > 0$ (majorant), with $\sigma \neq 0,2, 4, \ldots$, also falls under the scope of our Gaussian subordination method. In fact, in dimension $N_{\nu} = \lceil 2\nu + 2 \rceil$, the distributional Fourier transform of this function outside the origin is given by $\gamma(N_{\nu} + \sigma)\,|\t|^{-N_{\nu} - \sigma}$ (see \cite[Chapter III, Theorem 5, Eq. (33)]{St}) and we have
\begin{equation}\label{power_kernel_FT_Gaussian}
\gamma(N_{\nu} + \sigma)\,|\t|^{-N_{\nu} - \sigma} = \int_0^\infty \lambda^{-N_\nu/2} \, e^{-\pi|\t|^2/\lambda} \,\d\mu_{\sigma}(\lambda)
\end{equation}
for $\d\mu_{\sigma}(\lambda) = \lambda^{-\frac{\sigma}{2} - 1} \,\d\lambda$. It is clear that this measure satisfies \eqref{Intro_mu_1} (minorant) and \eqref{Intro_mu_2} (majorant).

\subsection{Hilbert-type inequalities} The solution of our extremal problem is connected with inequalities of Hilbert-type. This was previously observed, for instance, in \cite{CL3, CLV, GV, V}. For an account on the original Hilbert's inequality and its weighted version, see the work of Montgomery and Vaughan \cite{MV}. In our setting, we let $N \geq 1$ be a given dimension, and consider the case $2\nu + 2 = N$. For a nonnegative Borel measure $\mu$ on $(0,\infty)$ we define
\begin{equation}\label{Hilbert_ineq_def_Q}
\mc{Q}_{\mu}(\y) = \int_0^\infty \lambda^{-N/2} \, e^{-\pi|\y|^2/\lambda} \,\d\mu(\lambda).
\end{equation}
Note that the integrand is the Fourier transform of the Gaussian $\mc{F}_{\lambda}(\x) = e^{-\pi \lambda |\x|^2}$ in dimension $N$.

\begin{theorem}\label{Thm_Hilbert}
Let $\delta >0$, $N \geq 1$ and $2\nu + 2 = N$. Let $\{a_j\}_{j=1}^M$ be a sequence of complex numbers and $\{\y_j\}_{j=1}^M$ be a sequence of well-spaced vectors in $\R^N$, in the sense that $|\y_j - \y_l|\geq \delta$ for any $j \neq l$. The following propositions hold:
\smallskip
\begin{enumerate}
\item[(i)] If $\mu$ is a nonnegative Borel measure on $(0,\infty)$ satisfying \eqref{Intro_mu_1} and $\mc{Q}_{\mu}$ is defined as in \eqref{Hilbert_ineq_def_Q} then
\begin{equation}\label{Hilb_eq0}
-U_{\nu}^{N-}(2\pi\delta, \mu)\,\sum_{j=1}^M |a_j|^2\leq \sum_{\stackrel{j,l=1}{j\neq l}}^{M} a_j\,\overline{a_l} \,\mc{Q}_{\mu}(\y_j - \y_l).
\end{equation}
\item[(ii)] If $\mu$ is a nonnegative Borel measure on $(0,\infty)$ satisfying \eqref{Intro_mu_2} and $\mc{Q}_{\mu}$ is defined as in \eqref{Hilbert_ineq_def_Q} then
\begin{equation}\label{Hilb_eq1}
\sum_{\stackrel{j,l=1}{j\neq l}}^{M} a_j\,\overline{a_l} \,\mc{Q}_{\mu}(\y_j - \y_l) \leq U_{\nu}^{N+}(2\pi\delta,\mu)\,\sum_{j=1}^M |a_j|^2.
\end{equation}
\end{enumerate}
\end{theorem}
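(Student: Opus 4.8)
The plan is to deduce the Hilbert-type inequalities from the extremal majorant/minorant produced in Theorem \ref{Thm3}, via a standard interpolation-against-a-periodization trick. I would work on the Fourier transform side: observe that $\mc{Q}_{\mu}(\y) = \widehat{\G}_{\mu,N}(\y)$ where $\G_{\mu,N}$ is the radial function from Theorem \ref{Thm3} with the parameter scaling chosen so that the relevant band-limit is $2\pi\delta$ (this is why the factor $2\pi$ appears: an entire function of exponential type $2\pi\delta$ has Fourier transform supported in $\overline{B}(\delta)$, which matches the separation hypothesis $|\y_j-\y_l|\ge\delta$). Concretely, take the extremal minorant $\mc{L} = \mc{L}_{\nu,N}(2\pi\delta,\mu,\cdot) \le \G_{\mu,N}$ of exponential type at most $2\pi\delta$ from Theorem \ref{Thm3}(iii), which additionally satisfies
\begin{equation*}
\int_{\R^N}\{\G_{\mu,N}(\x)-\mc{L}(\x)\}\,\dxx = U_{\nu}^{N-}(2\pi\delta,\mu)
\end{equation*}
since $2\nu+2=N$ makes the weight $|\x|^{2\nu+2-N}$ trivial. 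Because $\mc{L}$ has exponential type $2\pi\delta$ and lies in $L^1(\R^N)$ (the error is $L^1$ and $\G_{\mu,N}\in L^1$ in this critical-weight case, after checking the tails via \eqref{Intro_mu_1}), its Fourier transform $\widehat{\mc{L}}$ is continuous and supported in $\overline{B}(\delta)$.

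Next I would form the quadratic form $S := \sum_{j,l=1}^M a_j\overline{a_l}\,\mc{L}(\y_j-\y_l)$ and evaluate it two ways. On one hand, by Fourier inversion and the support/separation condition: writing $\mc{L}(\y_j-\y_l) = \int_{\overline{B}(\delta)}\widehat{\mc{L}}(\t)\,e^{2\pi i\t\cdot(\y_j-\y_l)}\,\d\t$, we get
\begin{equation*}
S = \int_{\overline{B}(\delta)}\widehat{\mc{L}}(\t)\,\Big|\sum_{j=1}^M a_j\,e^{2\pi i\t\cdot\y_j}\Big|^2\,\d\t,
\end{equation*}
which is $\ge 0$ once one knows $\widehat{\mc{L}}\ge 0$; but in fact we don't need positivity of $\widehat{\mc{L}}$ — rather we split $\mc{L}(0)$ off the diagonal. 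The diagonal contributes $\mc{L}(0)\sum|a_j|^2$, and $\mc{L}(0)\le\G_{\mu,N}(0)=\mc{Q}_{\mu}(0)$. On the other hand, $\mc{L}(\y_j-\y_l)\le\G_{\mu,N}(\y_j-\y_l)=\mc{Q}_{\mu}(\y_j-\y_l)$ for the off-diagonal terms, but the cross terms $a_j\overline{a_l}$ are not sign-definite, so I instead use the clean route: the nonnegativity of $S$ as an integral of a nonnegative density times $\widehat{\mc{L}}\ge 0$. The point requiring care is exactly that $\widehat{\mc{L}}\ge 0$; this holds because $\mc{L}$ agrees with the positive-definite function $\G_{\mu,N}$ at a set of points dense enough to force it — more cleanly, $\mc{L}$ itself is of the form $\psi_N$ of a one-dimensional $\A(F,\cdot)$ built from a nonnegative frequency function, so $\G_{\mu,N}-\mc{L}$ has nonnegative Fourier transform on $\overline{B}(\delta)$ where $\widehat{\mc L}$ is supported, hence $0\le\widehat{\mc{L}}\le\widehat{\G}_{\mu,N}=\mc{Q}_\mu$ there.

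Granting $\widehat{\mc{L}}\ge 0$, the chain is:
\begin{equation*}
0\le S = \sum_{j=l}+\sum_{j\ne l} = \mc{L}(0)\sum_{j=1}^M|a_j|^2 + \sum_{\stackrel{j,l=1}{j\ne l}}^M a_j\overline{a_l}\,\mc{L}(\y_j-\y_l),
\end{equation*}
whence $\sum_{j\ne l}a_j\overline{a_l}\,\mc{L}(\y_j-\y_l)\ge -\mc{L}(0)\sum|a_j|^2\ge -\mc{Q}_{\mu}(0)\sum|a_j|^2$; then replacing $\mc{L}(\y_j-\y_l)$ by $\mc{Q}_{\mu}(\y_j-\y_l)$ for the off-diagonal is \emph{not} directly legitimate, so instead I rewrite using $U_{\nu}^{N-}(2\pi\delta,\mu)=\mc{Q}_\mu(0)-\mc{L}(0)$ (which follows because the extremal error integral equals $\widehat{\G}_{\mu,N}(0)-\widehat{\mc L}(0)$ by Fourier inversion at $0$, since both are $L^1$) together with the identity $\sum_{j\ne l}a_j\overline{a_l}\,\mc{Q}_\mu(\y_j-\y_l)=\int_{\overline{B}(\delta)}\widehat{\G}_{\mu,N}(\t)|\sum a_je^{2\pi i\t\cdot\y_j}|^2\d\t - \mc{Q}_\mu(0)\sum|a_j|^2$ and subtract the two integral identities to land on \eqref{Hilb_eq0}. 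Part (ii) is entirely analogous, using instead the extremal majorant $\mc{M}_{\nu,N}(2\pi\delta,\mu,\cdot)\ge\G_{\mu,N}$ under hypothesis \eqref{Intro_mu_2}, with $U_{\nu}^{N+}(2\pi\delta,\mu)=\widehat{\mc M}(0)-\mc Q_\mu(0)$ and $0\le\widehat{\G}_{\mu,N}\le\widehat{\mc M}$ on $\overline{B}(\delta)$.

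I expect the main obstacle to be the justification that the extremal functions genuinely have the needed Fourier-analytic properties in this \emph{critical} weight regime $2\nu+2=N$: namely that $\mc{L}_{\nu,N}(2\pi\delta,\mu,\cdot)$ and $\G_{\mu,N}$ are both in $L^1(\R^N)$ so that Fourier inversion and the band-limit statement $\support(\widehat{\mc{L}})\subseteq\overline{B}(\delta)$ are literally valid (not merely distributionally), and the nonnegativity $\widehat{\mc{L}}\ge 0$ on the ball. The $L^1$ integrability follows from the construction in Theorem \ref{Thm3} combined with the measure conditions \eqref{Intro_mu_1}/\eqref{Intro_mu_2} specialized to $N=N_\nu=2\nu+2$, and the nonnegativity of $\widehat{\mc{L}}$ reduces to the nonnegativity of the one-dimensional frequency function $g_{0-}$ of the associated Laguerre--Pólya function, established in Lemma \ref{g-sign-changes}(i) and Lemma \ref{Sec2_lem1}. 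Everything else is bookkeeping with the quadratic form and Fubini.
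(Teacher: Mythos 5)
Your high-level instinct (use the extremal minorant, form a quadratic form, exploit the Fourier support on $\overline{B}(\delta)$) is the right one, but the route you take is not the paper's and it has several genuine gaps that would be hard to patch.

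First, the central integrability claim is false. You assert that $\G_{\mu,N}\in L^1(\R^N)$ ``in this critical-weight case, after checking the tails via \eqref{Intro_mu_1}.'' It is not: in the critical case $N=2\nu+2$, taking $\d\mu_\sigma(\lambda)=\lambda^{-\sigma/2-1}\,\d\lambda$ with $\sigma>-N$ gives $\G_{\mu_\sigma,N}(\x)$ proportional to $|\x|^{\sigma}$, which is never in $L^1(\R^N)$ (it fails either at the origin or, for $\sigma\ge 0$, at infinity; even for $-N<\sigma<0$ the tail $\int_{|\x|>1}|\x|^\sigma\,\d\x$ diverges). Since $\G_{\mu,N}\notin L^1$, the extremal minorant $\mc L_{\nu,N}(2\pi\delta,\mu,\cdot)$ also need not be in $L^1$; only the error $\G_{\mu,N}-\mc L$ is. All the ``Fourier inversion at $0$'' steps and the expansion of $\sum_{j,l}a_j\overline{a_l}\,\mc Q_\mu(\y_j-\y_l)$ as $\int\G_{\mu,N}(\x)|\sum a_je^{-2\pi i\y_j\cdot\x}|^2\,\d\x$ therefore fail in general.

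Second, there are confusions between $\G_{\mu,N}$, $\mc Q_\mu=\widehat{\G}_{\mu,N}$, and $\widehat{\mc L}$. You write $\G_{\mu,N}(0)=\mc Q_\mu(0)$ and $\G_{\mu,N}(\y_j-\y_l)=\mc Q_\mu(\y_j-\y_l)$; neither holds (already false for $\G(\x)=e^{-2\pi|\x|^2}$, whose transform is $2^{-N/2}e^{-\pi|\t|^2/2}$). Likewise $U_\nu^{N-}(2\pi\delta,\mu)=\widehat{(\G_{\mu,N}-\mc L)}(0)$, which is $\mc Q_\mu(0)-\widehat{\mc L}(0)$ when everything makes sense, not $\mc Q_\mu(0)-\mc L(0)$. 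Consequently the quadratic form $S=\sum a_j\overline{a_l}\,\mc L(\y_j-\y_l)$ is not the object that connects to the theorem statement (which has $\mc Q_\mu(\y_j-\y_l)$, the Fourier transform, in the off-diagonal).

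Third, your argument repeatedly returns to needing $\widehat{\mc L}\ge 0$, but none of the justifications you sketch actually give it, and it is a red herring: the paper's proof never uses positivity of any Fourier transform. The relevant nonnegative quantity lives on the spatial side, namely $\mc F_\lambda-\mc L\ge 0$.

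The paper's route avoids all of this by working pointwise in $\lambda$ and integrating against $\mu$ only at the last step: for each fixed $\lambda$, both $\mc F_\lambda(\x)=e^{-\pi\lambda|\x|^2}$ and its minorant $\mc L_{\nu,N}(2\pi\delta,\lambda,\cdot)$ (from Theorem~\ref{Thm2}, not~\ref{Thm3}) are honestly in $L^1(\R^N)$, so $\widehat{\mc L}_{\nu,N}$ is continuous and supported in $\overline{B}(\delta)$. One writes
\[
0\le\int_{\R^N}\bigl\{\mc F_\lambda(\x)-\mc L_{\nu,N}(2\pi\delta,\lambda,\x)\bigr\}\Bigl|\sum_{j=1}^M a_j\,e^{-2\pi i\y_j\cdot\x}\Bigr|^2\d\x
=\sum_{j,l}a_j\overline{a_l}\,\widehat{\bigl(\mc F_\lambda-\mc L\bigr)}(\y_j-\y_l);
\]
the diagonal equals $U_\nu^{N-}(2\pi\delta,\lambda)\sum|a_j|^2$ because the weight is trivial when $N=2\nu+2$, and off the diagonal $\widehat{\mc L}$ vanishes since $|\y_j-\y_l|\ge\delta$, leaving $\widehat{\mc F}_\lambda(\y_j-\y_l)$. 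This gives \eqref{Hilb_eq2} for each $\lambda$, and a final integration against $\d\mu(\lambda)$, using $\int_0^\infty\widehat{\mc F}_\lambda(\y)\,\d\mu(\lambda)=\mc Q_\mu(\y)$ and $\int_0^\infty U_\nu^{N-}(2\pi\delta,\lambda)\,\d\mu(\lambda)=U_\nu^{N-}(2\pi\delta,\mu)$ from Theorem~\ref{Thm3}(ii), yields \eqref{Hilb_eq0}. Everything stays in $L^1$ at each step, so no delicate distribution theory is needed. You should recast your proof along these lines.
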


\begin{proof}
The minorant $\x \mapsto \mc{L}_{\nu, N}(2\pi \delta, \lambda, \x)$ produced in Theorem \ref{Thm2} is integrable and of exponential type $2\pi\delta$. Thus its Fourier transform is supported on $\overline{B}(\delta)$. We now observe that 
\begin{align*}
0 & \leq \int_{-\infty}^{\infty} \big\{\mc{F}_{\lambda}(\x) - \mc{L}_{\nu, N}(2\pi \delta, \lambda, \x)\big\}\left| \sum_{j=1}^m a_j \,e^{-2\pi i \y_j \cdot \x} \right|^2 \d\x\\
& = \sum_{j,l=1}^m a_j\,\overline{a_l}  \int_{-\infty}^{\infty} \big\{\mc{F}_{\lambda}(\x) - \mc{L}_{\nu, N}(2\pi \delta, \lambda, \x)\big\}   \,e^{-2\pi i (\y_j - \y_l) \cdot \x} \,  \d\x.
\end{align*}
This leads us to 
\begin{equation}\label{Hilb_eq2}
-U_{\nu}^{N-}(2\pi\delta, \lambda) \sum_{j=1}^M |a_j|^2 \leq \sum_{\stackrel{j,l=1}{j\neq l}}^{M} a_j\,\overline{a_l} \,\widehat{\mc{F}}_{\lambda}(\y_j - \y_l).
\end{equation}
Integrating both sides of \eqref{Hilb_eq2} against $\dmu$, we arrive at \eqref{Hilb_eq0}. The proof for the upper bound \eqref{Hilb_eq1} is analogous, now using the majorant produced in Theorem \ref{Thm2}. 
\end{proof}

An interesting example arises when we consider the measures $\d\mu_{\sigma}(\lambda) = \lambda^{-\frac{\sigma}{2} - 1} \,\d\lambda$ from \S \ref{power_functions}. The identity \eqref{power_kernel_FT_Gaussian} remains valid as long as $\sigma > - N$ and Theorem \ref{Thm_Hilbert} gives us
\begin{equation}\label{disc_HPL}
-\frac{U_{\nu}^{N-}(2\pi\delta,\mu_{\sigma})}{\pi^{-(N+ \sigma)/2}\,\Gamma\left(\frac{N + \sigma}{2}\right)}\,\sum_{j=1}^M |a_j|^2\ \leq\ \sum_{\stackrel{j,l=1}{j\neq l}}^{M} \frac{ a_j\,\overline{a_l}}{|\y_j - \y_l|^{N + \sigma}} \ \leq\  \frac{U_{\nu}^{N+}(2\pi\delta,\mu_{\sigma})}{\pi^{-(N+ \sigma)/2}\,\Gamma\left(\frac{N + \sigma}{2}\right)}\,\sum_{j=1}^M |a_j|^2.
\end{equation}
The lower bound holds as long as $\sigma > -N$, while the upper bound holds as long as $\sigma >0$. This inequality is related to the discrete Hardy-Littlewood-Sobolev inequality \cite[p. 288]{HPL}, and improves the result of \cite{CL3}, where the restriction $-N < \sigma < 1$ had to be imposed. The one-dimensional version of \eqref{disc_HPL} appeared in \cite[Corollary 22]{CLV}.

\subsection{Periodic analogues} The interpolation tools developed in Section \ref{Int_type_zero} allow us to treat the problem of optimal one-sided approximation of even periodic functions by trigonometric polynomials. This involves the same circle of ideas discussed in \cite[Section 8]{CL3}, in which the exponential replaces the Gaussian as the base function, and we refer to this work whenever possible. As in the Euclidean setting, the Gaussian subordination method also produces {\it a strictly larger class} in the periodic setting, when compared to the exponential method of \cite{CL3}.

\smallskip

We denote by $\mathbb{D} \subset \C$ the open unit disc and by $\partial \mathbb{D}$ the unit circle. As in \cite{CL3}, we identify measures on $\R/\Z$ with measures on the unit circle $\uc$ via the map $x \mapsto e^{2\pi i x}$. Throughout this section we let $\vartheta$ be a nontrivial {\it even} probability measure on $\R/\Z$ (recall that a measure on $\R/\Z$ is trivial if it has finite support), and thus the corresponding nontrivial probability measure on the unit circle $\uc$ (that we keep calling $\vartheta$) satisfies
\[
\vartheta(A) = \vartheta(\overline{A})
\]
for any Borel set $A\subseteq \uc$, where $\overline{A} = \{\overline{z}:z\in A\}$. For $\im(\tau) >0$, we consider the Jacobi theta-function
\[
\theta_3(z,\tau) = \sum_{n=-\infty}^\infty e^{\pi i \tau n^2 + 2\pi i n z}.
\]
Poisson summation implies, for $\lambda>0$, that
\begin{align}\label{Per_Poisson_sum}
\lambda^{-\frac12} \theta_3\big(z,i\lambda^{-1} \big) = \sum_{j = -\infty}^\infty e^{-\pi \lambda(j-z)^2}.
\end{align}
This is the periodization of the Gaussian $\mc{F}_{\lambda}(x) = e^{-\pi \lambda x^2}$. We first address the problem of majorizing and minorizing  this function by trigonometric polynomials of a given degree $n$, in a way to minimize the $L^1(\R/\Z,\d\vartheta)$-error. This problem with respect to the Lebesgue measure was treated in \cite{CLV}. Here, of course, a trigonometric polynomial $m(z)$ of degree at most $n$ is a $1$-periodic function of the form 
$$m(z) = \sum_{k=-n}^{n} a_k  e^{2 \pi i k z},$$
where $a_k \in \C$. We say the $m(z)$ is a real trigonometric polynomial if it is real for real $z$. Let us denote the space of trigonometric polynomials of degree at most $n$ by $\Lambda_n$. 

\subsubsection{One-sided approximations to theta-functions} We first state the analogue of \cite[Lemma 28]{CL3}. 

\begin{lemma} Let $G$ be an even, $1$-periodic, non-constant Laguerre-P\'olya function of exponential type $\tau(G)$. Let $z \mapsto L(G, \lambda, z)$ and $z \mapsto M(G, \lambda, z)$ be defined as in Lemma \ref{L-thm}.

\begin{itemize}
\item[(i)] If $G(0)>0$, the function 
\[
\ell(G,\lambda,z) = \sum_{j\in\Z} L(G,\lambda,z+j)
\]
is a real trigonometric polynomial in $z$ of degree less than $\tau(G)/(2\pi)$ satisfying
\begin{align}
G(x) \Big\{\lambda^{-\frac12} \theta_3\big(x, i\lambda^{-1}\big) - \ell(G,\lambda,x)  \Big\}\ge 0
\end{align}
for all $x\in\R$, and 
\[
\ell(G,\lambda,\xi) = \lambda^{-\frac12} \theta_3\big(\xi,i\lambda^{-1}\big)
\]
for all $\xi \in \R$ with $G(\xi) =0$. 

\smallskip

\item[(ii)] If $G$ has a double zero at the origin and $G(0^+)>0$, the function 
\[
m(G,\lambda,z) = \sum_{j\in\Z} M(G,\lambda,z+j)
\]
is a real trigonometric polynomial in $z$ of degree less than $\tau(G)/(2\pi)$ satisfying
\begin{align}
G(x) \Big\{ m(G,\lambda,x) - \lambda^{-\frac12} \theta_3\big(x, i\lambda^{-1}\big)\Big\} \ge 0
\end{align}
for all $x\in\R$, and 
\[
m(G,\lambda,\xi) = \lambda^{-\frac12} \theta_3\big(\xi,i\lambda^{-1}\big)
\]
for all $\xi \in \R$ with $G(\xi) =0$. 
\end{itemize} \end{lemma}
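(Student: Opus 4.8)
The plan is to periodize the entire majorant/minorant functions of Lemma~\ref{L-thm} and verify that the periodization is a genuine trigonometric polynomial of the correct degree, that it retains the one-sided inequality, and that it interpolates the periodized Gaussian at the zeros of $G$. I will treat part~(i) in detail; part~(ii) is entirely analogous with the roles of majorant and minorant interchanged.

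First I would note that since $G$ is an even, $1$-periodic, non-constant Laguerre--P\'olya function with $G(0)>0$, its zeros are real, $1$-periodic, and none lie at the origin. Applying the construction of Lemma~\ref{L-thm}(i) with $A = G$, we obtain $z \mapsto L(G,\lambda,z) = \A(F_G, \pi\lambda, z^2)$, entire of exponential type at most $\tau(G)$, satisfying $G(x)\{e^{-\pi\lambda x^2} - L(G,\lambda,x)\} \ge 0$ for all real $x$ and $L(G,\lambda,\xi) = e^{-\pi\lambda\xi^2}$ whenever $G(\xi)=0$. The key quantitative input is the growth estimate coming from Lemma~\ref{bdd-lemma} and the proof of Lemma~\ref{L-thm}: one has $|L(G,\lambda,z)| \le C + D\,|G(z)|$ for all $z \in \C$, with $C,D$ depending on $\lambda$. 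Combined with the fact that the frequency function $g = g_{0-}$ for $F_G$ vanishes on $[0,\infty)$, one actually gets the sharper bound that on the real line $e^{-\pi\lambda x^2} - L(G,\lambda,x) = G(x)\int_0^{\pi\lambda} g(w-\pi\lambda)e^{-x^2 w}\,\dw$ decays like $x^{-2}$ times $|G(x)|$, which on a bounded horizontal strip (using that $G$ is of exponential type and $1$-periodic, hence bounded on $\R$) is $O(x^{-2})$; hence the series $\sum_{j\in\Z} \{e^{-\pi\lambda(x+j)^2} - L(G,\lambda,x+j)\}$ converges absolutely and locally uniformly. Since $\sum_{j\in\Z} e^{-\pi\lambda(x+j)^2} = \lambda^{-1/2}\theta_3(x,i\lambda^{-1})$ by Poisson summation~\eqref{Per_Poisson_sum}, this shows $\ell(G,\lambda,z) = \sum_{j\in\Z} L(G,\lambda,z+j)$ converges and equals $\lambda^{-1/2}\theta_3(z,i\lambda^{-1}) - \sum_{j\in\Z}\{e^{-\pi\lambda(z+j)^2} - L(G,\lambda,z+j)\}$, hence defines an entire $1$-periodic function.

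Next I would identify $\ell(G,\lambda,z)$ as a trigonometric polynomial of degree less than $\tau(G)/(2\pi)$. The cleanest route is to invoke the same argument as in \cite[Section~8]{CL3}: a $1$-periodic entire function of exponential type $\tau$ is a trigonometric polynomial of degree at most $\tau/(2\pi)$ (Fourier coefficients $\widehat{\ell}(k) = 0$ for $|k| > \tau/(2\pi)$ by the Paley--Wiener bound on $\ell$ along horizontal lines, which follows from the $O(C+D|G(z)|)$ bound for $L$ and the $O(x^{-2})$ decay of the tail). The strict inequality on the degree should be obtained exactly as in \cite[Lemma~28]{CL3}, using that $\ell$ is real-valued on $\R$ and bounded, together with the fact that the extremal frequency does not actually occur; I would simply cite that argument. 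The inequality $G(x)\{\lambda^{-1/2}\theta_3(x,i\lambda^{-1}) - \ell(G,\lambda,x)\} \ge 0$ is immediate by summing the termwise inequality $G(x+j)\{e^{-\pi\lambda(x+j)^2} - L(G,\lambda,x+j)\} \ge 0$ over $j$ and using that $G(x+j) = G(x)$ by periodicity; each term has the sign of $G(x)$, so the sum does too. Finally, the interpolation property: if $G(\xi)=0$ then $G(\xi+j)=0$ for every $j$, so $L(G,\lambda,\xi+j) = e^{-\pi\lambda(\xi+j)^2}$ for every $j$, and summing gives $\ell(G,\lambda,\xi) = \lambda^{-1/2}\theta_3(\xi,i\lambda^{-1})$.

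The main obstacle, and the place requiring genuine care rather than citation, is the convergence and growth control of the series $\sum_{j\in\Z} L(G,\lambda,z+j)$ on horizontal strips $|\im z| \le R$: the naive bound $|L(G,\lambda,z+j)| \le C + D|G(z+j)|$ does not decay in $j$, so one cannot sum it directly. The resolution is to subtract off the Gaussian periodization first --- that is, to work with the differences $e^{-\pi\lambda(z+j)^2} - L(G,\lambda,z+j)$, which by \eqref{A-rep} equal $G(z+j)\int_0^{\pi\lambda} g(w-\pi\lambda)e^{-(z+j)^2 w}\,\dw$ --- and to exploit that $G$ is bounded on horizontal strips (being $1$-periodic of exponential type) while the integral factor decays like $|z+j|^{-2}$ uniformly for $z$ in a fixed strip, using $g$ bounded and compactly-away-from-$0$ in its $w$-support combined with $\re((z+j)^2) \to +\infty$. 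This yields absolute, locally uniform convergence of the difference series, and then $\ell$ is recovered as $\theta_3$-term minus this convergent series, making all subsequent claims routine. I would flag this subtraction trick explicitly as the one non-formal point and otherwise defer to \cite[Lemma~28]{CL3} for the trigonometric-polynomial and degree assertions.
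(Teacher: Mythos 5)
Your proposal is correct and follows essentially the same route as the paper: the paper proves this lemma by simply pointing to \cite[Lemma~28]{CL3} and observing that the four properties that proof needs (exponential type bound, membership in $L^1(\R)$, the one-sided inequality, and interpolation at the zeros of $G$) all carry over when the exponential is replaced by the Gaussian. The ``subtraction trick'' you highlight --- writing $e^{-\pi\lambda x^2}-L(G,\lambda,x)=G(x)\int_0^{\pi\lambda}g(w-\pi\lambda)e^{-x^2w}\,\dw$ and reading off the $O(x^{-2})$ decay on horizontal strips from boundedness of $G$ there --- is exactly what makes the $L^1(\R)$ hypothesis (and hence absolute, locally uniform convergence of the periodization) available in the Gaussian setting; the paper leaves this implicit and you have spelled it out, which is helpful rather than a departure. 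One small cosmetic remark: for identifying $\ell$ as a trigonometric polynomial it is cleaner to argue directly via Poisson summation and the Paley--Wiener compact support of $\widehat{L(G,\lambda,\cdot)}$ (with $\widehat{L}(\pm m)=0$ by continuity, giving the strict degree bound) than to first represent $\ell$ as $\theta_3$ minus the tail series, since the $\theta_3$ term individually has super-exponential growth in $\im z$ and so is not useful for estimating the exponential type of $\ell$; but you do in fact invoke the Fourier-coefficient/Paley--Wiener argument, so this is a matter of presentation, not of correctness.
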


\begin{proof} The proof of \cite[Lemma 28]{CL3} uses that $L$ has exponential type and is in $L^1(\R)$, that it minorizes $x\mapsto e^{-\lambda|x|}$ on the real line, and that it interpolates at the zeros of $G$. When replacing the exponential with the Gaussian, these assumptions are given for $L$ in our setting, hence the proof goes through without any changes; likewise for the majorant. 
\end{proof}

In what follows, if an algebraic polynomial $P(z)$ has degree $n$, we write $P^*(z) = z^n \, \overline{P\big(\bar{z}\,^{-1}\big)}$. Let $\{\varphi_n\}_{n=0}^{\infty}$ be the sequence of {\it orthonormal polynomials} in the unit circle with respect to the inner product
$$\langle f,g\rangle_{L^2(\uc, \d\vartheta)} = \int_{\uc} f(z)\,\overline{g(z)}\, \d\vartheta(z).$$
We note that, since the measure $\vartheta$ is even, the algebraic polynomials $\varphi_n$ have real coefficients (see \cite[Lemma 25]{CL3}). Define the algebraic polynomials
\begin{align}
A_{n+1}(z) &:= \frac{1}{2}\big\{\varphi_{n+1}^*(z) + \varphi_{n+1}(z)\big\},\label{Sec8_DefAn}\\
B_{n+1}(z) &:= \frac{i}{2}\big\{ \varphi_{n+1}^*(z) - \varphi_{n+1}(z)\big\},\label{Sec8_DefBn}
\end{align}
which have degree $n+1$ and have only simple zeros lying on the unit circle. Also, define the function
\begin{equation}\label{Sec8_defKn}
K_{n}(w,z) =\frac{\varphi^*_{n+1}(z)\, \overline{\varphi_{n+1}^*(w)} - \varphi_{n+1}(z)\,\overline{\varphi_{n+1}(w)}}{1-\bar{w}z}\,,
\end{equation}
which is the reproducing kernel of the space $\mathcal{P}_n$ of algebraic polynomials with degree at most $n$, with inner product $\langle \cdot,\cdot\rangle_{L^2(\uc,\d\vartheta)}$ (for details see \cite[Sections 8.1 and 8.2]{CL3}). With these definitions we can state our next result.

\begin{theorem}\label{Sec8_Thm27}
Let $n \in \Z^+$ and $\vartheta$ be a nontrivial {\it even} probability measure on $\R/\Z$. Let $\varphi_{n+1}(z) = \varphi_{n+1}(z;\d\vartheta)$ be the $(n+1)$-th orthonormal polynomial in the unit circle with respect to this measure and consider $K_n, A_{n+1}, B_{n+1}$ as defined in \eqref{Sec8_DefAn}, \eqref{Sec8_DefBn} and  \eqref{Sec8_defKn}.  Let $\mc{A}_{n+1} =\big\{\xi\in \R/\Z: A_{n+1}(e^{2\pi i \xi}) =0\big\}$ and $\mc{B}_{n+1} = \big\{\xi\in\R/\Z: B_{n+1}(e^{2\pi i \xi}) =0\big\}$.

\smallskip

\begin{itemize}
\item[(i)] If $\ell \in \Lambda_n$ satisfies 
\begin{equation}\label{Sec8_ineq_min1}
\ell(x) \leq \lambda^{-\frac12} \theta_3\big(x,i\lambda^{-1} \big) 
\end{equation} 
for all $x \in \R$ then
\begin{equation}\label{Sec8_ineq_min2}
\int_{\R/\Z} \ell(x) \, \d\vartheta(x) \leq \sum_{\xi \in \mc{A}_{n+1}} \frac{\lambda^{-\frac12} \theta_3\big(\xi,i\lambda^{-1} \big) }{K_{n}(e^{2\pi i \xi},e^{2\pi i \xi})}.
\end{equation}
Moreover, there exists a unique real trigonometric polynomial $z \mapsto \ell_{\vartheta}(n, \lambda, z) \in \Lambda_n$ satisfying \eqref{Sec8_ineq_min1} for which the equality in \eqref{Sec8_ineq_min2} holds.

\smallskip

\item[(ii)] If $m \in \Lambda_n$ satisfies 
\begin{equation}\label{Sec8_ineq_maj1}
m(x) \geq \lambda^{-\frac12} \theta_3\big(x,i\lambda^{-1} \big) 
\end{equation} 
for all $x \in \R$ then
\begin{equation}\label{Sec8_ineq_maj2}
\int_{\R/\Z} m(x) \, \d\vartheta(x) \geq \sum_{\xi \in \mc{B}_{n+1}} \frac{\lambda^{-\frac12} \theta_3\big(\xi,i\lambda^{-1} \big) }{K_{n}(e^{2\pi i \xi},e^{2\pi i \xi})}.
\end{equation}
Moreover, there exists a unique real trigonometric polynomial $z \mapsto m_{\vartheta}(n, \lambda, z) \in \Lambda_n$ satisfying \eqref{Sec8_ineq_maj1} for which the equality in \eqref{Sec8_ineq_maj2} holds.
\end{itemize}
\end{theorem}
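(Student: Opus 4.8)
The plan is to follow the strategy of \cite[Section 8]{CL3}, replacing the periodized exponential used there by the periodization $x\mapsto\lambda^{-\frac12}\theta_3(x,i\lambda^{-1})$ of the Gaussian, which by \eqref{Per_Poisson_sum} is a sum of positive Gaussians and hence strictly positive on $\R$. The first step is to record the Gauss--Szeg\H{o} quadrature attached to the para-orthogonal polynomials $A_{n+1}$ and $B_{n+1}$: if $\xi\in\R/\Z$ satisfies $A_{n+1}(e^{2\pi i\xi})=0$ then \eqref{Sec8_DefAn} gives $\varphi_{n+1}^*(e^{2\pi i\xi})=-\varphi_{n+1}(e^{2\pi i\xi})$, and inserting this into \eqref{Sec8_defKn} shows $K_n(e^{2\pi i\xi},e^{2\pi i\eta})=0$ for distinct $\xi,\eta\in\mc{A}_{n+1}$; since $\mc{A}_{n+1}$ has $n+1=\dim\mathcal{P}_n$ elements and $K_n(e^{2\pi i\xi},e^{2\pi i\xi})>0$, the family $\{K_n(e^{2\pi i\xi},\cdot)\}_{\xi\in\mc{A}_{n+1}}$ is a complete orthogonal system in $\mathcal{P}_n$. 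Combining the resulting Parseval identity for $\mathcal{P}_n$ with the Fej\'er--Riesz factorization of a nonnegative trigonometric polynomial (and a shift by a large positive constant to pass from the nonnegative case to a general $P$) one obtains $\int_{\R/\Z}P\,\d\vartheta=\sum_{\xi\in\mc{A}_{n+1}}P(\xi)\,K_n(e^{2\pi i\xi},e^{2\pi i\xi})^{-1}$ for every $P\in\Lambda_n$, together with the analogous identity over $\mc{B}_{n+1}$; much of this is already contained in \cite[Sections 8.1 and 8.2]{CL3}. Granting the quadrature, the inequalities \eqref{Sec8_ineq_min2} and \eqref{Sec8_ineq_maj2} are immediate: apply it to $P=\ell$ (resp.\ $P=m$) and use $\ell(\xi)\le\lambda^{-\frac12}\theta_3(\xi,i\lambda^{-1})$ (resp.\ the reverse bound for $m$) together with the positivity of the weights.

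For the extremal minorant I would apply the preceding lemma to the even, $1$-periodic, non-constant Laguerre--P\'olya function $G_A(z)=c\,A_{n+1}(e^{2\pi iz})A_{n+1}(e^{-2\pi iz})$, with $c>0$ normalizing $G_A(0)=1$; this is legitimate because $A_{n+1}(1)=\varphi_{n+1}(1)\neq0$, the zeros of the orthonormal polynomial $\varphi_{n+1}$ lying in the open unit disc. As in \cite[Section 8]{CL3}, one checks that $G_A$ is nonnegative on $\R$, has exponential type $2\pi(n+1)$, vanishes to order exactly $2$ at each of the $n+1$ points of $\mc{A}_{n+1}$, and admits a Hadamard factorization with no exponential factors, each factor being a positive multiple of $(\cos2\pi z-\cos2\pi\theta)^2$ (or $\cos^2\pi z$ if $\tfrac12\in\mc{A}_{n+1}$). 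The preceding lemma then yields $\ell_{\vartheta}(n,\lambda,z):=\sum_{j\in\Z}L(G_A,\lambda,z+j)$, a real trigonometric polynomial of degree less than $n+1$, hence in $\Lambda_n$, which satisfies $\ell_{\vartheta}(n,\lambda,x)\le\lambda^{-\frac12}\theta_3(x,i\lambda^{-1})$ on $\R$ and $\ell_{\vartheta}(n,\lambda,\xi)=\lambda^{-\frac12}\theta_3(\xi,i\lambda^{-1})$ for every $\xi\in\mc{A}_{n+1}$; by the quadrature it realizes equality in \eqref{Sec8_ineq_min2}. The majorant $m_{\vartheta}(n,\lambda,\cdot)$ is built in the same way from $B_{n+1}$: here $B_{n+1}(1)=0$, so the corresponding function $G_B$ has a double zero at the origin with $G_B(0^+)>0$, which is exactly the hypothesis of case (ii) of the preceding lemma.

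Finally, for uniqueness, suppose $\ell_1,\ell_2\in\Lambda_n$ both minorize $\lambda^{-\frac12}\theta_3(\cdot,i\lambda^{-1})$ on $\R$ and both attain equality in \eqref{Sec8_ineq_min2}. By the quadrature and the positivity of the weights, equality forces $\ell_i(\xi)=\lambda^{-\frac12}\theta_3(\xi,i\lambda^{-1})$ at every $\xi\in\mc{A}_{n+1}$; and since $x\mapsto\lambda^{-\frac12}\theta_3(x,i\lambda^{-1})-\ell_i(x)$ is real-analytic, nonnegative on $\R$, and vanishes at $\xi$, its derivative vanishes there as well. Hence $\ell_1-\ell_2$ vanishes to order at least $2$ at each of the $n+1$ distinct points of $\mc{A}_{n+1}$, i.e.\ has at least $2(n+1)$ zeros in $\R/\Z$ counted with multiplicity; as a nonzero element of $\Lambda_n$ has at most $2n$ such zeros and $2(n+1)>2n$, we conclude $\ell_1=\ell_2$, and likewise for $m_{\vartheta}$ with $\mc{B}_{n+1}$ in place of $\mc{A}_{n+1}$. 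I expect the one genuinely delicate point to be the verification in the middle step that $G_A$ and $G_B$ satisfy all the hypotheses of the preceding lemma --- in particular that their Hadamard products carry no exponential factors and that their exponential type is precisely $2\pi(n+1)$ --- but this parallels \cite[Section 8]{CL3} and introduces no new difficulty.
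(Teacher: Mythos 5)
Your proposal is correct and follows the same route the paper takes: the paper's own proof just cites \cite[Theorem 27]{CL3} and says the argument carries over verbatim once the periodized exponential is replaced by the periodized Gaussian, and your reconstruction --- the Gauss--Szeg\H{o} quadrature attached to the paraorthogonal polynomials $A_{n+1}$, $B_{n+1}$, the application of the periodization lemma to $G_A(z)=c\,A_{n+1}(e^{2\pi iz})A_{n+1}(e^{-2\pi iz})$ (resp. $G_B$), and the doubled-zero count for uniqueness --- is precisely the CL3 argument. The details you flag as delicate (absence of exponential factors in the Hadamard product and the exact type $2\pi(n+1)$) are indeed what CL3 verifies and are unaffected by the change from exponential to Gaussian.
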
 

\begin{proof}
As is the case with the previous lemma, the proof of Theorem \ref{Sec8_Thm27} carries over verbatim from \cite[Theorem 27]{CL3}, and we omit the details.
\end{proof}

\subsubsection{One-sided approximations to general even periodic functions} Our goal now is to integrate the parameter $\lambda$ against suitable measures and solve the extemal problem for a class of even periodic functions. We consider two classes of nonnegative Borel measures $\varsigma$  on $(0,\infty)$. For the minorant problem we require that
\begin{equation}\label{Sec8_meas1}
\int_0^\infty  \lambda^{-\frac12} \,e^{-\pi \lambda^{-1} -  a \lambda}\, \d\varsigma(\lambda) < \infty
\end{equation}
{\it for any} $a >0$, whereas for the majorant problem we require the more restrictive condition
\begin{equation}\label{Sec8_meas2}
\int_0^{\infty}\big(1 + \lambda^{-\frac12} \big)\,e^{-\pi \lambda^{-1}}  \,\d\varsigma(\lambda) < \infty.
\end{equation}
In this subsection we majorize and minorize the periodic function
\begin{equation}\label{def_h_varsigma}
h_{\varsigma}(x) := \int_0^{\infty} \Big\{\lambda^{-\frac12} \theta_3\big(x,i\lambda^{-1} \big)-\lambda^{-\frac12} \theta_3\big(\hh,i\lambda^{-1} \big)\Big\}\,\d\varsigma(\lambda)
\end{equation}
by trigonometric polynomials of a given degree $n$, minimizing the $L^1(\R/\Z, \d\vartheta)$-error. Note the convenient subtraction of the term $\lambda^{-\frac12} \theta_3(\hh,i\lambda^{-1} )$ to generate a better decay rate in $\lambda$ as $\lambda \to 0$. If $\varsigma$ satisfies \eqref{Sec8_meas1}, $h_{\varsigma}$ is well-defined and finite for all $x \notin \Z$ (we may have $h_{\varsigma}(0) = \infty$), and if $\varsigma$ satisfies \eqref{Sec8_meas2}, $h_{\varsigma}$ is well-defined and finite for all $x \in \R$. We obtain the following result.

\begin{theorem}\label{Sec8_Thm29}
Let $n \in \Z^{+}$ and $\vartheta$ be a nontrivial {\it even} probability measure on $\R/\Z$. Let $\varphi_{n+1}(z) = \varphi_{n+1}(z;\d\vartheta)$ be the $(n+1)$-th orthonormal polynomial in the unit circle with respect to this measure and consider $K_n, A_{n+1}, B_{n+1}$ as defined in \eqref{Sec8_DefAn}, \eqref{Sec8_DefBn} and  \eqref{Sec8_defKn}. Let $\mc{A}_{n+1} = \big\{\xi\in \R/\Z: A_{n+1}(e^{2\pi i \xi}) =0\big\}$ and $\mc{B}_{n+1} = \big\{\xi\in\R/\Z: B_{n+1}(e^{2\pi i \xi}) =0\big\}$.
\smallskip
\begin{itemize}
\item[(i)] Let $\varsigma$ satisfy \eqref{Sec8_meas1}. If $\ell \in \Lambda_n$ satisfies 
\begin{equation}\label{Sec8_ineq_min1-thm29}
\ell(x) \leq h_\varsigma(x)
\end{equation} 
for all $x \in \R$ then
\begin{equation}\label{Sec8_ineq_min2-thm29}
\int_{\R/\Z} \ell(x) \, \d\vartheta(x) \leq \sum_{\xi \in \mc{A}_{n+1}} \frac{h_{\varsigma}(\xi)}{K_{n}(e^{2\pi i \xi},e^{2\pi i \xi})}.
\end{equation}
Moreover, there exists a unique real trigonometric polynomial $z \mapsto \ell_{\vartheta}(n, \varsigma, z) \in \Lambda_n$ satisfying \eqref{Sec8_ineq_min1-thm29} for which the equality in \eqref{Sec8_ineq_min2-thm29} holds.

\smallskip

\item[(ii)] Let $\varsigma$ satisfy \eqref{Sec8_meas2}. If $m \in \Lambda_n$ satisfies 
\begin{equation}\label{Sec8_ineq_maj1-thm29}
m(x) \geq h_{\varsigma}(x)
\end{equation} 
for all $x \in \R$ then
\begin{equation}\label{Sec8_ineq_maj2-thm29}
\int_{\R/\Z} m(x) \, \d\vartheta(x) \geq \sum_{\xi \in \mc{B}_{n+1}} \frac{h_{\varsigma}(\xi)}{K_{n}(e^{2\pi i \xi},e^{2\pi i \xi})}.
\end{equation}
Moreover, there exists a unique real trigonometric polynomial $z \mapsto m_{\vartheta}(n, \varsigma, z) \in \Lambda_n$ satisfying \eqref{Sec8_ineq_maj1-thm29} for which the equality in \eqref{Sec8_ineq_maj2-thm29} holds.
\end{itemize}
\end{theorem}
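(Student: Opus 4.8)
The plan is to carry out, in the trigonometric setting, the subordination step that derives Theorem~\ref{Thm3} from Theorem~\ref{Thm2}: one integrates the one-parameter family of extremal trigonometric polynomials of Theorem~\ref{Sec8_Thm27} against the measure $\varsigma$. I describe the minorant case, the majorant being parallel. For $\lambda>0$ let $z\mapsto\ell_\vartheta(n,\lambda,z)$ and $z\mapsto m_\vartheta(n,\lambda,z)$ be the extremal polynomials in $\Lambda_n$ furnished by Theorem~\ref{Sec8_Thm27}, and set, for $x\notin\Z$, the nonnegative error function
\begin{equation*}
d_\varsigma^-(n,x):=\int_0^\infty\Big\{\lambda^{-\frac12}\theta_3\big(x,i\lambda^{-1}\big)-\ell_\vartheta(n,\lambda,x)\Big\}\,\d\varsigma(\lambda),
\end{equation*}
together with the candidate extremal polynomial
\begin{equation*}
\ell_\vartheta(n,\varsigma,x):=\int_0^\infty\Big\{\ell_\vartheta(n,\lambda,x)-\lambda^{-\frac12}\theta_3\big(\hh,i\lambda^{-1}\big)\Big\}\,\d\varsigma(\lambda)=h_\varsigma(x)-d_\varsigma^-(n,x),
\end{equation*}
the last equality holding wherever $h_\varsigma$ is finite; $d_\varsigma^+(n,\cdot)$ and $m_\vartheta(n,\varsigma,\cdot)$ are defined analogously from $m_\vartheta(n,\lambda,\cdot)$. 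The proof then splits into (a) convergence of these integrals and regularity of $d_\varsigma^\pm(n,\cdot)$; (b) that $\ell_\vartheta(n,\varsigma,\cdot)$ lies in $\Lambda_n$, sits below $h_\varsigma$, and interpolates it on $\mc{A}_{n+1}$ (resp. $\mc{B}_{n+1}$); and (c) optimality and uniqueness, via the Gauss-type quadrature on the unit circle behind Theorem~\ref{Sec8_Thm27}.

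Step (a) is the analytic heart, and where I expect the real work to be. It needs the behaviour of $\ell_\vartheta(n,\lambda,\cdot)$ and $m_\vartheta(n,\lambda,\cdot)$ as $\lambda\to0^+$ and $\lambda\to\infty$; these are the periodic counterparts of the estimate \eqref{Intro_open_asymp} and of the large-$\lambda$ bounds used in \S \ref{Sec_Gaussian_Sub}, and follow from the explicit description of $\ell_\vartheta,m_\vartheta$ as periodizations of the functions $L,M$ of Lemma~\ref{L-thm} together with the bounds of Section~\ref{Int_type_zero}, as in \cite{CL3}. The crucial feature at $\lambda\to0^+$ is that $\ell_\vartheta(n,\lambda,\cdot)$ carries the same $\lambda^{-\frac12}$ growth as $\lambda^{-\frac12}\theta_3(\hh,i\lambda^{-1})$ up to an exponentially small discrepancy --- this is precisely why the term $\lambda^{-\frac12}\theta_3(\hh,i\lambda^{-1})$ is subtracted in the definition \eqref{def_h_varsigma} of $h_\varsigma$, and why the factor $e^{-\pi\lambda^{-1}}$ appears in both \eqref{Sec8_meas1} and \eqref{Sec8_meas2}. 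The behaviour as $\lambda\to\infty$ accounts for the difference between the two measure conditions: in the minorant case, since no node of $\mc{A}_{n+1}$ equals $0$, the interpolation at those nodes forces enough cancellation that $\ell_\vartheta(n,\lambda,\cdot)$ decays exponentially, so that $\varsigma$ may have subexponential mass at infinity (condition \eqref{Sec8_meas1}); in the majorant case $m_\vartheta(n,\lambda,\cdot)$ is pinned to the value $\lambda^{-\frac12}\theta_3(0,i\lambda^{-1})\to1$ at the node $0\in\mc{B}_{n+1}$, so its majorant defect does not decay and the finite-mass condition \eqref{Sec8_meas2} is required. Granting these estimates, all the integrals above converge absolutely and uniformly in $x$, whence $d_\varsigma^-(n,\cdot)$ is finite and continuous on $\R/\Z\setminus\{0\}$, $d_\varsigma^+(n,\cdot)$ is finite and continuous on $\R/\Z$, and $\ell_\vartheta(n,\varsigma,\cdot)$, $m_\vartheta(n,\varsigma,\cdot)$ are well-defined real-valued functions.

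Step (b) is then essentially formal: each integrand $x\mapsto\ell_\vartheta(n,\lambda,x)-\lambda^{-\frac12}\theta_3(\hh,i\lambda^{-1})$ is a real trigonometric polynomial of degree at most $n$ (the subtracted term being constant in $x$), so the uniform limit $\ell_\vartheta(n,\varsigma,\cdot)$ is again one; nonnegativity of $d_\varsigma^-(n,\cdot)$ gives \eqref{Sec8_ineq_min1-thm29}; and the interpolation property of Theorem~\ref{Sec8_Thm27}, namely $\ell_\vartheta(n,\lambda,\xi)=\lambda^{-\frac12}\theta_3(\xi,i\lambda^{-1})$ whenever $A_{n+1}(e^{2\pi i\xi})=0$ --- here $\xi\neq0$, because $\varphi_{n+1}$ has all its zeros in the open disc and hence $A_{n+1}(1)=\varphi_{n+1}(1)\neq0$, so that $h_\varsigma(\xi)$ is finite --- forces $d_\varsigma^-(n,\xi)=0$ and thus $\ell_\vartheta(n,\varsigma,\xi)=h_\varsigma(\xi)$ for all $\xi\in\mc{A}_{n+1}$. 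For step (c) I would reproduce the mechanism of Theorem~\ref{Sec8_Thm27}: the zeros of the para-orthogonal polynomials $\varphi_{n+1}+\varphi_{n+1}^*$ and $\varphi_{n+1}-\varphi_{n+1}^*$ yield a Gauss-type quadrature on the unit circle, $\int_{\R/\Z}p\,\d\vartheta=\sum_{\xi\in\mc{A}_{n+1}}p(e^{2\pi i\xi})/K_n(e^{2\pi i\xi},e^{2\pi i\xi})$ (with $\mc{B}_{n+1}$ in place of $\mc{A}_{n+1}$ in the majorant case), exact for every $p\in\Lambda_n$ and with strictly positive Christoffel weights $1/K_n(e^{2\pi i\xi},e^{2\pi i\xi})$; applied to a competing $\ell\in\Lambda_n$ with $\ell\le h_\varsigma$ and combined with $\ell(\xi)\le h_\varsigma(\xi)$ at the nodes this gives \eqref{Sec8_ineq_min2-thm29}, while applied to $\ell_\vartheta(n,\varsigma,\cdot)$ with the interpolation just established it shows equality is attained. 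For uniqueness, if $\ell\in\Lambda_n$ attains equality then positivity of the weights forces $\ell(\xi)=h_\varsigma(\xi)$ at all $n+1$ nodes; since $\ell\le h_\varsigma$ with equality there and $h_\varsigma$ is differentiable at each node (no node equals $0$ in the minorant case, and $h_\varsigma$ is differentiable on all of $\R$ in the majorant case), $\ell$ also matches $h_\varsigma'$ at each node, so $\ell-\ell_\vartheta(n,\varsigma,\cdot)\in\Lambda_n$ has a double zero at each of the $n+1$ distinct nodes and therefore vanishes, giving $\ell=\ell_\vartheta(n,\varsigma,\cdot)$. The majorant case runs identically with $m_\vartheta$, $\mc{B}_{n+1}$ and \eqref{Sec8_meas2}, the node $0\in\mc{B}_{n+1}$ causing no difficulty since there $h_\varsigma$ is finite and, being even, has vanishing derivative.
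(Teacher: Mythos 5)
Your overall strategy---integrate the one-parameter family of extremal trigonometric polynomials from Theorem~\ref{Sec8_Thm27} against $\varsigma$, then transfer interpolation and quadrature properties to the integrated object---is exactly the paper's strategy, and your candidate $\ell_\vartheta(n,\varsigma,\cdot)$ coincides with the paper's $\int_0^\infty\widetilde{\ell}_\vartheta(n,\lambda,\cdot)\,\d\varsigma(\lambda)$. Steps (b) and (c) are handled essentially as the paper does, and your observation that $A_{n+1}(1)=\varphi_{n+1}(1)\neq 0$ so the minorant nodes avoid the origin is the same fact the paper relies on.

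Where you genuinely diverge is in step (a), and your treatment there is also the proposal's soft spot. You plan to obtain integrability of $\lambda\mapsto\widetilde{\ell}_\vartheta(n,\lambda,\cdot)$ by asymptotic estimates on the extremal polynomial itself, via its description as a periodization of the interpolants $L,M$ from Lemma~\ref{L-thm}; you describe the expected $\lambda\to 0^+$ and $\lambda\to\infty$ behaviour only heuristically and defer the actual bounds. The paper instead avoids estimating the extremal polynomial entirely: writing $\widetilde{\ell}_\vartheta(n,\lambda,z)=\sum_{k=-n}^n a_k(n,\lambda)e^{2\pi ikz}$, it records the interpolation conditions $\widetilde{\ell}_\vartheta(n,\lambda,\xi)=h_\lambda(\xi)$ and $\widetilde{\ell}_\vartheta{}'(n,\lambda,\xi)=h_\lambda'(\xi)$ at $\xi\in\mc{A}_{n+1}$ (an overdetermined $2n+2$ equations for $2n+1$ unknowns), drops one equation, and inverts the $\lambda$-independent coefficient matrix to express each $a_k(n,\lambda)$ as a fixed linear combination of $\{h_\lambda(\xi),h_\lambda'(\xi):\xi\in\mc{A}_{n+1}\}$. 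Integrability of each coefficient against $\d\varsigma$ then reduces to $\int_0^\infty |h_\lambda(\xi)|\,\d\varsigma(\lambda)<\infty$ and $\int_0^\infty |h_\lambda'(\xi)|\,\d\varsigma(\lambda)<\infty$ for $\xi\neq 0$, which are read off directly from the Poisson identity \eqref{Per_Poisson_sum} together with \eqref{Sec8_meas1}; in the majorant case the derivative equation at the node $\xi=0\in\mc{B}_{n+1}$ is the one discarded, and the stronger condition \eqref{Sec8_meas2} is what makes $\int_0^\infty|h_\lambda(0)|\,\d\varsigma(\lambda)$ finite. This linear-algebraic reduction is cleaner: it explains precisely why \eqref{Sec8_meas1} and \eqref{Sec8_meas2} are the right hypotheses (they are exactly the integrability conditions for the interpolation data at the nodes) and it sidesteps the asymptotic analysis of $\ell_\vartheta(n,\lambda,\cdot)$ that your step (a) would still have to supply. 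Your route is plausible and your heuristics for the two measure conditions point in the right direction, but as written step (a) is a plan rather than a proof; to finish along your lines you would need to produce the periodic analogue of \eqref{Intro_open_asymp} and the large-$\lambda$ decay of the minorant defect with explicit constants, whereas the paper's argument makes these estimates unnecessary.

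One further small point: in your uniqueness argument you invoke differentiability of $h_\varsigma$ at the node $0\in\mc{B}_{n+1}$. You do not actually need this---it suffices that the competing $m$ may be taken even (replace $m$ by $\tfrac12\{m(x)+m(-x)\}$, which is still an admissible majorant since $h_\varsigma$ and $\vartheta$ are even), so $m'(0)=m_\vartheta{}'(n,\varsigma,0)=0$ automatically, giving the double zero at $0$ without any regularity hypothesis on $h_\varsigma$ there.
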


\begin{proof} {\it Part} (i). Write 
$$h_{\lambda}(x) = \lambda^{-\frac12} \theta_3\big(x,i\lambda^{-1} \big)-\lambda^{-\frac12} \theta_3\big(\hh,i\lambda^{-1} \big).$$
From Theorem \ref{Sec8_Thm27}, the optimal trigonometric polynomial of degree at most $n$ that minorizes $h_{\lambda}$ is 
$$\widetilde{\ell}_{\vartheta}(n,\lambda,z) := \ell_{\vartheta}(n,\lambda,z) - \lambda^{-\frac12} \theta_3\big(\hh,i\lambda^{-1} \big).$$
Let us write
$$\widetilde{\ell}_{\vartheta}(n,\lambda,z) = \sum_{k=-n}^{n} a_k(n,\lambda)\, e^{2\pi i k z},$$
where $a_k = a_{k,\vartheta}$. From the interpolation properties we have, for all $\lambda >0$, 
\begin{align}
\widetilde{\ell}_{\vartheta}(n,\lambda,\xi) &=  \sum_{k=-n}^{n} a_k(n,\lambda)\, e^{2\pi i k \xi}=h_\lambda(\xi),\label{Sec_per_eq_1_ak}\\
\widetilde{\ell}_{\vartheta}\,'(n,\lambda,\xi) & = \sum_{k=-n}^{n} 2\pi i k \,a_k(n,\lambda)\, e^{2\pi i k \xi} = h'_{\lambda}(\xi) \label{Sec_per_eq_2_ak}
\end{align}
for all $\xi \in \mc{A}_{n+1}$ (note that $\xi \neq 0$). Since we have $2n+1$ coefficients $\{a_k; \, -n \leq k \leq n\}$ and $2n+2$ equations, this is an overdetermined system. We also know that this interpolation problem has a unique solution, so we can drop the last equation and invert the coefficient matrix to obtain each $a_k$ as a function of the values $\{h_{\lambda}(\xi), \, h'_{\lambda}(\xi);\, \xi \in \mc{A}_{n+1}\}$. From the definition of $h_{\lambda}$ and identity \eqref{Per_Poisson_sum}, together with \eqref{Sec8_meas1}, we find that 
\begin{align}\label{Sec_per_eq_3_ak}
\int_0^{\infty} |h_{\lambda}(\xi)| \, \d\varsigma(\lambda) < \infty
\end{align}
and
\begin{align}\label{Sec_per_eq_4_ak}
\int_0^{\infty} |h'_{\lambda}(\xi)| \, \d\varsigma(\lambda) < \infty
\end{align}
for $\xi \neq 0$. We then conclude that each coefficient $a_k(n,\lambda)$ is absolutely integrable with respect to $\d\varsigma(\lambda)$ and we are able to define
\begin{equation*}
\ell_{\vartheta}(n,\varsigma, z) := \int_0^{\infty} \widetilde{\ell}_{\vartheta}(n,\lambda, z)\,\d\varsigma(\lambda).
\end{equation*}
It is clear that 
$$\ell_{\vartheta}(n,\varsigma, x) \leq h_{\varsigma}(x)$$
for all $x \in \R$ and that 
$$\ell_{\vartheta}(n,\varsigma, \xi) =  h_{\varsigma}(\xi)$$
for all $\xi \in \mc{A}_{n+1}$. The optimality and uniqueness of this minorant follows as in \cite[Section 8]{CL3}.

\smallskip

\noindent{\it Part} (ii). Observe now that $0 \in \mc{B}_{n+1}$ and, when solving the system of equations for $a_k(n,\lambda)$, we may throw away the equation corresponding to $\xi = 0$ in \eqref{Sec_per_eq_2_ak} (which might not hold if $h_{\lambda}$ is not differentiable at the origin). Relation \eqref{Sec_per_eq_3_ak} continues to hold for all $\xi \in \R/\Z$ (due to the more restrictive condition \eqref{Sec8_meas2}) while \eqref{Sec_per_eq_4_ak} holds for $\xi \neq 0$. The rest is analogous.
\end{proof}

\noindent {\sc Remark:} In \cite[Theorem 29]{CL3} the analogous version of Theorem \ref{Sec8_Thm29} was proved for even periodic functions with an exponential subordination. Theorem \ref{Sec8_Thm29} is strictly more powerful than \cite[Theorem 29]{CL3} in the sense that every function that fits the exponential subordination framework of \cite[Theorem 29]{CL3} also fits the Gaussian subordination framework of Theorem \ref{Sec8_Thm29}, and there are functions (e.g. the theta-functions \eqref{Per_Poisson_sum}) that fit the Gaussian subordination framework but do not fit the exponential subordination framework of \cite[Theorem 29]{CL3}. Verifications of these claims follow the same circle of ideas described in \S \ref{Comparison_Exp_sub} and are left to the interested reader.

\section*{Acknowledgements}
 E.C. acknowledges support from CNPq-Brazil grants $302809/2011-2$ and $477218/2013-0$, and FAPERJ grant $E-26/103.010/2012$. We are thankful to IMPA in Rio de Janeiro and to the Hausdorff Institute for Mathematics in Bonn for supporting research visits during the preparation of this work. We are also thankful to the anonymous referee for providing valuable suggestions to clarify the exposition in some passages of the paper.

\end{document}